\documentclass[reqno, 12pt]{amsart}

\usepackage{amsmath}
\usepackage{amsfonts}
\usepackage{amssymb}
\usepackage{amsthm}
\usepackage{bbm}
\usepackage{graphicx, color}
\usepackage{tikz}
\usetikzlibrary{calc,decorations.markings}
\usetikzlibrary{arrows.meta}
\usetikzlibrary{positioning}
\usetikzlibrary{cd, intersections, calc, decorations.pathmorphing, arrows, decorations.pathreplacing}

\usepackage{slashbox}
\allowdisplaybreaks

\definecolor{mygreen}{rgb}{0,0.7,0.3}


\usepackage{subfiles}
\usepackage[colorlinks, linkcolor=blue, citecolor=red, urlcolor=cyan, pagebackref]{hyperref}

\textwidth16cm \textheight22.8cm \headheight12pt
\oddsidemargin.4cm \evensidemargin.4cm \topmargin0cm

\numberwithin{equation}{section}

\usepackage{cleveref}
\crefname{thm}{Theorem}{Theorems}
\crefname{cor}{Corollary}{Corollaries}
\crefname{lem}{Lemma}{Lemmas}
\crefname{sublem}{Sublemma}{Sublemmas}
\crefname{prop}{Proposition}{Propositions}
\crefname{dfn}{Definition}{Definitions}
\crefname{defi}{Definition}{Definitions}
\crefname{ex}{Example}{Examples}
\crefname{claim}{Claim}{Claims}
\crefname{conj}{Conjecture}{Conjectures}
\crefname{conv}{Convention}{Conventions}
\crefname{rem}{Remark}{Remarks}
\crefname{rmk}{Remark}{Remarks}
\crefname{prob}{Problem}{Problems}
\crefname{quest}{Question}{Questions}
\crefname{figure}{Figure}{Figures}
\crefname{table}{Table}{Tables}
\crefname{section}{Section}{Sections}
\crefname{subsection}{Section}{Sections}
\crefname{appendix}{Appendix}{Appendices}

\newtheorem{thm}{Theorem}[section]
\newtheorem{prop}[thm]{Proposition}
\newtheorem{cor}[thm]{Corollary}
\newtheorem{lem}[thm]{Lemma}

\theoremstyle{definition}
\newtheorem{dfn}[thm]{Definition}
\newtheorem{defi}[thm]{Definition}
\newtheorem{ex}[thm]{Example}

\theoremstyle{remark}
\newtheorem{rmk}[thm]{Remark}

\newtheorem{quest}[thm]{Question}

\newcommand*{\chom}{\mathcal{H}\kern -.5pt om}

\newcommand{\bZ}{\mathbb{Z}}
\newcommand{\bQ}{\mathbb{Q}}
\newcommand{\bR}{\mathbb{R}}

\newcommand{\bT}{\mathbb{T}}

\newcommand{\bP}{\mathbb{P}}
\newcommand{\bG}{\mathbb{G}}
\newcommand{\bA}{\mathbb{A}}
\newcommand{\bE}{\mathbb{E}}

\newcommand{\bs}{{\mathsf{s}}}

\newcommand{\indi}{i}

\newcommand{\indk}{k}

\newcommand{\cA}{\mathcal{A}}

\newcommand{\cC}{\mathcal{C}}

\newcommand{\cE}{\mathcal{E}}
\newcommand{\cF}{\mathcal{F}}

\newcommand{\cK}{\mathcal{K}}

\newcommand{\cS}{\mathcal{S}}
\newcommand{\cT}{\mathcal{T}}
\newcommand{\cU}{\mathcal{U}}
\newcommand{\cV}{\mathcal{V}}

\newcommand{\X}{\mathcal{X}}
\newcommand{\cX}{\mathcal{X}}


\newcommand{\fS}{\mathfrak{S}}
\newcommand{\fsl}{\mathfrak{sl}}
\newcommand{\frakg}{\mathfrak{g}}

\newcommand{\bep}{\boldsymbol{\epsilon}}
\newcommand{\fol}{\cF}
\newcommand{\sfa}{\mathsf{a}}
\newcommand{\pot}{\mathsf{w}}
\newcommand{\bft}{\mathbf{t}}

\newcommand{\Hom}{\mathrm{Hom}}

\newcommand{\tri}{\triangle}
\newcommand{\sgn}{\mathrm{sgn}}
\newcommand{\trop}{\mathrm{trop}}
\newcommand{\stab}{\mathrm{stab}}

\newcommand{\MF}{\mathcal{MF}}

\newcommand{\eML}{\widehat{\mathcal{ML}}}
\newcommand{\dMF}{\widetilde{\mathcal{MF}}}

\newcommand{\Tri}{\mathrm{Tri}}
\newcommand{\bTri}{\bT \mathrm{ri}}
\newcommand{\tr}{\mathsf{T}}

\newcommand{\bExch}{\bE \mathrm{xch}}
\newcommand{\uf}{\mathrm{uf}}

\newcommand{\Teich}{Teichm\"uller}

\newcommand{\TT}{\mathrm{TT}}

\newcommand{\sD}{\mathsf{D}}
\newcommand{\Dfd}{\mathsf{D}_{\mathsf{fd}}}
\newcommand{\per}{\mathsf{per}}

\DeclareMathOperator{\im}{\mathrm{im}}
\DeclareMathOperator{\coker}{\mathrm{coker}}

\DeclareMathOperator{\relint}{\mathrm{relint}}
\DeclareMathOperator{\interior}{\mathrm{int}}
\DeclareMathOperator{\Spec}{\mathrm{Spec}}

\renewcommand{\mathbf}{\boldsymbol}

\makeatletter
\newcommand{\oset}[3][0ex]{%
  \mathrel{\mathop{#3}\limits^{
    \vbox to#1{\kern-2\ex@
    \hbox{$\scriptstyle#2$}\vss}}}}
\makeatother
\newcommand{\overbar}[1]{\oset{#1}{-\!\!\!-\!\!\!-}}

\makeatletter
\newcommand{\osetnear}[3][0ex]{%
  \mathrel{\mathop{#3}\limits^{
    \vbox to#1{\kern-.3\ex@
    \hbox{$\scriptstyle#2$}\vss}}}}
\makeatother
\newcommand{\overbarnear}[1]{\osetnear{#1}{-\!\!\!-\!\!\!-}}

\makeatletter
\newcommand{\osetfar}[3][0ex]{%
  \mathrel{\mathop{#3}\limits^{
    \vbox to#1{\kern-5\ex@
    \hbox{$\scriptstyle#2$}\vss}}}}
\makeatother


\tikzset{
    squigarrow/.style={-{Classical TikZ Rightarrow[length=4pt]}, decorate, decoration={snake, amplitude=1.8pt, pre length=2pt, post length=3pt}}
}

\setcounter{tocdepth}{1} 

\title{Train track combinatorics and cluster algebras}

\author[Shunsuke Kano]{Shunsuke Kano}
\address{Shunsuke Kano, Research Alliance Center for Mathematical Sciences, Tohoku University,
6-3 Aoba, Aramaki, Aoba-ku, Sendai, Miyagi 980-8578, Japan.}
\email{s.kano@tohoku.ac.jp}

\date{\today}

\begin{document}
\maketitle

\begin{abstract}
The concepts of train track was introduced by W. P. Thurston to study the measured foliations/laminations and the pseudo-Anosov mapping classes on a surface.
In this paper, we translate some concepts of train tracks into the language of cluster algebras using the Goncharov--Shen's potential function \cite{GS15}.
Through this translation, we prove the sign stability \cite{IK21} of the general pseudo-Anosov mapping classes.
\end{abstract}

\tableofcontents
\section{Introduction}\label{sec:intro}
\emph{Train track} was introduced by W. P. Thurston \cite{Th} to study the \emph{measured foliations}\footnote{In \cite{Th}, he uses measured laminations instead for measured foliations. However, these concepts are closely related. For instance, it is well-known that the space of measured foliations and the space of measured laminations are isomorphic as a piecewise-linear manifold.} and \emph{pseudo-Anosov mapping classes}.
A train track is a graph embedded smoothly in a surface so that each vertex looks like a switch of a railway of trains. (See \cref{fig:switch_cond}.)
For a train track, we often assign a $\bR_{\geq 0}$-valued \emph{weight} (called \emph{measure} after this section) at each edge satisfying \emph{switch condition} \eqref{eq:switch_cond}.
We can think of a pair of a train track and a weight as a combinatorial model of a measured foliation.
(See \cref{subsec:traintrack} for the concrete definition.)
More precisely, it is proven in \cite{Th} that the space of weights of the ``recurrent'' and ``maximal'' (called \emph{complete}) train tracks gives an atlas of the space of the measured foliations.
On the other hand, for each pseudo-Anosov mapping class $\phi$, there is a train track, called the \emph{invariant track}, which behaves nicely about the action of $\phi$.
Many properties of pseudo-Anosov mapping classes are obtained from invariant tracks.
For instance, the topological entropy of a pseudo-Anosov mapping class is given by the logarithm of the spectral radius of the \emph{transition matrix} associated to the invariant track.

The \emph{cluster algebra}, invented by S. Fomin and A. Zelevinsky \cite{FZ-CA1}, is a combinatorial structure that appears in so many branches of mathematics and mathematical physics, such as discrete integrable systems, Painlev\'e equations, representation of associative algebras, Donaldson--Thomas invariants, quantum field theories.
The theory of (higher) \Teich\ spaces is one of them \cite{FG06,FG07}.
V. Fock and A. Goncharov introduce a pair $(\cA, \cX)$ of schemes with certain ``positivity", called a \emph{cluster ensemble}, coming with an action of a discrete symmetry group $\Gamma$, called a \emph{cluster modular group} \cite{FG09}.
The schemes are related by a $\Gamma$-equivariant morphism $p: \cA \to \cX$, called the \emph{ensemble map}.
By the positivity, we can consider the set of the \emph{semifield $\bP$-valued points} $(\cA(\bP), \cX(\bP))$ of a cluster ensemble.
A cluster ensemble is defined for the combinatorial datum, called a \emph{mutation class} of seeds.
There is a mutation class obtained from the \emph{ideal triangulations} of a punctured surface $\Sigma$, so we denote by $(\cA_\Sigma, \cX_\Sigma)$ the corresponding cluster ensemble.
It is known that the sets of $\bR_{>0}$-valued points of them correspond to the certain generalizations of the \Teich\ space $\cT(\Sigma)$:
\begin{align*}
    \cA_\Sigma(\bR_{>0}) \cong \widetilde{\cT}(\Sigma), \qquad
    \cX_\Sigma(\bR_{>0}) \cong \widehat{\cT}(\Sigma).
\end{align*}
Here, $\widetilde{\cT}(\Sigma)$ and $\widehat{\cT}(\Sigma)$ are called the \emph{decorated \Teich\ space} and the \emph{enhanced \Teich\ space}, respectively.
(See \cite[Section 2]{Pen} for more details. In this book, the latter is called the \emph{holed \Teich\ space}.)
The usual \Teich\ space $\cT(\Sigma)$ corresponds to the image of the ensemble map: $p(\cA_\Sigma(\bR_{>0})) \cong \cT(\Sigma)$.
It is also known that the sets of the tropical semifield $\bR^\trop = (\bR, \min, +)$-valued points of them correspond to certain generalizations of the space of measured foliations:
\begin{align*}
    \cA_\Sigma(\bR^\trop) \cong \dMF(\Sigma), \qquad
    \cX_\Sigma(\bR^\trop) \cong \widehat{\MF}(\Sigma).
\end{align*}
The elements of the former $\dMF(\Sigma)$ (resp. the latter $\widehat{\MF}(\Sigma)$) are allowed that there are the ``peripheral leaves'' (resp. the leaves which are stuck into the punctures).
(See \cref{sec:dMF} for the former $\dMF(\Sigma)$ and \cite{GW17} for the latter.)
The space $\MF(\Sigma)$ of ordinal measured foliations corresponds to the image of the ensemble map: $p(\cA_\Sigma(\bR^\trop)) \cong \MF(\Sigma)$.
Moreover, the mapping class group $MC(\Sigma)$ is regarded as a subgroup of the cluster modular group $\Gamma_\Sigma$ of finite index.

The purpose of this paper is twofold:
one is to translate the terminologies of the theory of train tracks into the theory of cluster algebras (\cref{part:TT_CA}), and the other is to compare the sequence of splittings of the invariant train track and the sign of a representation path of a pseudo-Anosov mapping class (\cref{part:SS}).

\subsection{Train tracks in terms of cluster algebras (\cref{part:TT_CA})}
The space $\MF(\Sigma)$ of the measured foliations is a piecewise-linear manifold.
One of the piecewise-linear structures (atlases) is given by complete train tracks as mentioned above.
It means that each complete train track gives a local chart.

On the other hand, A. Goncharov and L. Shen introduced a potential function on $\cA$ in \cite{GS15}.
We call it \emph{Goncharov--Shen potential function}.
We show that the maximal domains of linearity of the tropicalized Goncharov--Shen potential function\footnote{The tropicalization of the Goncharov--Shen potential (\emph{i.e.}, the induced function on $\cA(\bR^\trop)$) is a piecewise-linear map.} on $\cA_\Sigma(\bR^\trop)$ correspond to the local chars of $\MF(\Sigma)$ associated to the complete train tracks.

\begin{thm}[{\cref{cor:TT_D,thm:atlases}}]\label{introthm:atlas}
Let $\Sigma$ be a punctured surface.
Then, there is a bijection from the set of complete train tracks on $\Sigma$ to the set of maximal domains of linearity of the tropicalized Goncharov--Shen potential function on $\cA_\Sigma(\bR^\trop)$.
Moreover, the piecewise-linear structure given by train tracks and the one given by the cluster structure on $\MF(\Sigma) \setminus \{\emptyset\}$ are equivalent.
\end{thm}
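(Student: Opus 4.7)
The plan is to work one ideal triangulation at a time, explicitly computing the tropicalized Goncharov--Shen potential $W^\trop$ in the $\cA$-coordinates associated with a triangulation $T$, and then read off the maximal domains of linearity directly as cones of weights on a train track. As set up in \cite{GS15}, the potential $W$ is a sum $\sum_p W_p$ of contributions localized at each puncture $p$, and each $W_p$ is a sum of Laurent monomials indexed by the ``corners'' of triangles at $p$. Tropically, $W^\trop = \sum_p \min_{c} \ell_c$ is a min-piecewise-linear function where $\ell_c$ is the linear form corresponding to the corner $c$ at $p$. A maximal domain of linearity is therefore determined by choosing, for each puncture $p$, a nonempty subset $S_p$ of corners at $p$ where the minimum is attained; equivalently, a ``local switch condition'' that selects which half-branches of $T$ are joined smoothly at each puncture.

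The first key step is to verify that this selection datum $\{S_p\}_p$ is exactly the combinatorial input defining a complete train track $\tau$ carried by $T$: namely, one smooths the corners in each $S_p$ and crushes the remaining edges, obtaining a trivalent graph transverse to $T$ on which the $\cA$-coordinates become the branch weights. One then checks:
\begin{itemize}
\item the linear equalities $\ell_{c} = \ell_{c'}$ (for $c,c' \in S_p$) that cut out the domain are precisely the switch conditions \eqref{eq:switch_cond} at the vertex of $\tau$ corresponding to $p$;
\item the linear inequalities $\ell_c \leq \ell_{c'}$ (for $c\in S_p$, $c'\notin S_p$) express the nonnegativity of the weights at the branches one would obtain by further splittings;
\item the resulting $\tau$ is recurrent and maximal, i.e.\ complete, exactly when the domain is top-dimensional in $\cA_\Sigma(\bR^\trop)$.
\end{itemize}

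For surjectivity, the plan is to show that every complete train track $\tau$ on $\Sigma$ arises from some triangulation $T$: one takes an ideal triangulation $T$ dual to a (slight enlargement of) $\tau$, so that each branch of $\tau$ lies inside a unique triangle and the corners at each puncture match the smoothing datum $S_p$. Then $\tau$ is realized as the train track associated to a specific maximal domain in the chart for $T$. The equality of coordinates (branch weights vs.\ $\cA$-coordinates) is then immediate from the construction.

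The ``moreover'' statement about equivalence of piecewise-linear atlases reduces to checking that the two families of transition maps agree. On the cluster side, changing triangulation by a flip induces an $\cA$-mutation, and its tropicalization is a specific min-piecewise-linear map. On the train track side, the analogous operation is a carrying move (splitting/shifting of branches across a flipped edge). The step is to verify, case by case on the local picture around a flipped edge, that the tropical mutation formula coincides with the weight transformation formula for the carrying move; this is a finite and local calculation. The main obstacle I expect is here: making the dictionary between corner smoothings and train track branches fully canonical when a vertex of $\tau$ has large valence (arising from many corners selected at one puncture) and handling the peripheral leaves so that the identification extends globally to $\dMF(\Sigma)\setminus\{\emptyset\}$ rather than just to the image of the ensemble map $p(\cA_\Sigma(\bR^\trop))\cong \MF(\Sigma)$. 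Once these local compatibilities are established, the bijection and the atlas equivalence follow.
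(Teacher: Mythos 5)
Your overall strategy—compute the tropicalized Goncharov--Shen potential in a fixed $\cA$-chart, match its maximal domains of linearity with complete train tracks suited to that triangulation, then compare atlases—is the one the paper uses (\cref{thm:TT_fan}, \cref{cor:TT_D}, \cref{thm:atlases}). However, your combinatorial model for the bijection has a real slip. You propose that a maximal domain of linearity is parametrized by, for each puncture $p$, a \emph{nonempty subset} $S_p$ of corners at $p$ where the minimum is attained, and that the equalities $\ell_c=\ell_{c'}$ for $c,c'\in S_p$ are ``switch conditions at the vertex of $\tau$ corresponding to $p$.'' Both claims misdescribe the picture. A \emph{maximal} (i.e.\ top-dimensional) domain of $w_p=\min_{t\in T_{\tri,p}}(a^0_{p,t}-a^1_{p,t}-a^2_{p,t})$ forces $|S_p|=1$: if $|S_p|\geq 2$ the domain is contained in a hyperplane $\ell_c=\ell_{c'}$ and is a face, not a maximal domain. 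Thus the correct datum is a single triangle $t_p\in T_{\tri,p}$ per puncture, and the corresponding train track $\tau_{\mathbf t}$ is obtained from the freeway $\tau_\tri$ by deleting, in each $t_p$, the short branch opposite $p$ (see the proof of \cref{thm:TT_fan}). There is no switch ``at $p$'': the switches of $\tau_{\mathbf t}$ all lie in the interiors of the ideal triangles, and the complementary region around each puncture is a once-punctured monogon, not a high-valence vertex. Consequently the worry you raise about ``large valence when many corners are selected at one puncture'' is moot once the parameterization is repaired. The inequalities $\ell_{t_p}\leq \ell_t$ for $t\neq t_p$ do encode nonnegativity of certain branch weights, but this is the kind of detail that only comes out right after fixing $|S_p|=1$.

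On the ``moreover'' part, your plan of checking the local weight transformation against the tropical $\cA$-mutation around a flipped edge is workable (the paper carries out a closely related case analysis in \cref{thm:birel_TT}, \cref{tab:deform_train_track}), but the paper's proof of \cref{thm:atlases} takes a cleaner route: it shows that the transition from the cluster chart $\mathbf a^\tri$ to the train-track chart $\phi_\tau$ (for a $\tau$ suited to the \emph{same} $\tri$) is \emph{linear} (a base change between bases of weights on the freeway), and then factors any train-track-to-train-track transition through two cluster charts, $\phi_{\tau_2}\circ\phi_{\tau_1}^{-1}=(\phi_{\tau_2}\circ a_{\tri_2}^{-1})\circ(a_{\tri_2}\circ a_{\tri_1}^{-1})\circ(a_{\tri_1}\circ\phi_{\tau_1}^{-1})$. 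This avoids the flip-by-flip casework and also handles the decoration by $\bR^P$ cleanly, since the short branches $B(\tau_\tri,\tau)$ deleted from the freeway carry exactly the $\bR^P$ coordinates. You should also note that passing from ``a domain in some chart'' to ``a complete train track on $\Sigma$'' uses that every complete train track is suited to the triangulation dual to its trigonal collapse (\cref{rem} after \cref{thm:tt_atlas_MF}), so surjectivity is essentially as you sketch it.
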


\begin{rmk}
We can easily extend the piecewise-linear structure given by train tracks on $\MF(\Sigma)$ to $\dMF(\Sigma)$ and restate \cref{introthm:atlas} for it.
With the identification $\dMF(\Sigma) \cong \cA_\Sigma(\bR^\trop)$ in mind, it is a natural question whether there is the piecewise-linear structure given by train tracks on $\cX_\Sigma(\bR^\trop)$.
Train tracks for $\cX_\Sigma(\bR^\trop) (\cong \eML(\Sigma))$ are introduced in \cite[Appendix A]{IK20b} but these are suited to \emph{pants decompositions} as opposed to ideal triangulations.
\end{rmk}

The most basic operation in the theory of cluster algebras is the \emph{mutation}.
It is well-known the \emph{flip} for an ideal triangulation corresponds to the mutation for the corresponding dual quiver.
Also, there are famous elementary moves for the train tracks, called splitting, folding and shifting (see \cref{fig:split,fig:shift}).
We build a relationship between them:
\begin{thm}[\cref{thm:birel_TT}]\label{introthm:birel_TT}
Let $\Sigma$ be a punctured surface and let $\tri$ and $\tri'$ be ideal triangulations of $\Sigma$ which are related by the flip along an ideal arc.
Then, there is a binary relation between the set of complete train tracks which are suited to $\tri$ and the set of those that are suited to $\tri'$.
\end{thm}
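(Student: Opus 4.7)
The plan is to lift the combinatorics of complete train tracks to the tropical cluster variety via \cref{introthm:atlas} and then read off the desired relation from the tropical mutation induced by the flip. Concretely, the identification $\cA_\Sigma(\bR^\trop) \cong \dMF(\Sigma)$ together with \cref{introthm:atlas} puts complete train tracks suited to $\tri$ (resp.\ $\tri'$) in bijection with the maximal domains of linearity of the tropicalized Goncharov--Shen potential $\pot^\trop$ expressed in the cluster chart of the seed associated to $\tri$ (resp.\ $\tri'$). Thus both collections of tracks are identified with top-dimensional cells of two polyhedral decompositions of the \emph{same} underlying space $\cA_\Sigma(\bR^\trop)$.

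Because $\tri$ and $\tri'$ differ by a single flip, the seeds they determine differ by a single mutation $\mu_k$, where $k$ is the quiver vertex dual to the flipped arc. The induced change of tropical coordinates is the integral piecewise-linear map $\mu_k^\trop \colon \cA_\Sigma(\bR^\trop) \to \cA_\Sigma(\bR^\trop)$ given by the standard tropical mutation formula. Under this change of coordinates the polyhedral decomposition coming from $\tri'$ is transported to a refinement of the decomposition coming from $\tri$, and vice versa. The binary relation I would then define is: $T \sim T'$ whenever the (closures of the) cells associated with $T$ and $T'$ have overlap with nonempty interior in $\cA_\Sigma(\bR^\trop)$. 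Unwinding the identifications, this says precisely that there exists a measured foliation carried, in a top-dimensional family of weights, by both $T$ and $T'$.

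The main obstacle is verifying that this tropical/cluster-theoretic relation matches the expected combinatorial description in terms of the elementary train-track moves (splittings, foldings and shifts), which one wants to perform only inside the quadrilateral hosting the flipped arc. This calls for a local analysis near the flipped edge: enumerate the possible combinatorial patterns in which the branches of a complete train track can traverse the two triangles adjacent to the flipped arc, together with their switch conditions along the boundary of the quadrilateral, and for each pattern compare the explicit formula for $\mu_k^\trop$ with the weight change induced by the corresponding sequence of splittings, foldings and shifts. Since the local types are finite in number, this amounts to a finite case check; the delicate point is keeping track of the compatibility of switch conditions across the boundary of the quadrilateral so that the local moves glue into a genuine train track on $\Sigma$. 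Once each case matches, the relation defined above coincides with the elementary-move relation, which is the content of \cref{thm:birel_TT}.
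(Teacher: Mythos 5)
Your proposal reverses the direction of the paper's argument: the paper defines the binary relation $\lambda_k$ \emph{directly} via sequences of elementary moves (splittings, foldings, shiftings) performed only inside the quadrilateral $Q$, establishes well-definedness and the chain structure by a combinatorial case analysis (Lemma \ref{lem:decomp_slide} and Table \ref{tab:deform_train_track}), and only afterwards ties this to the tropicalized Goncharov--Shen potential in Section \ref{sec:GSpot_Vvar}. You instead propose to start from the identification of complete train tracks with maximal linearity domains of $\pot$ (Theorem \ref{introthm:atlas}), define the relation by overlap of top-dimensional cells under the tropical mutation $\mu_k^\trop$, and then recover the elementary-move picture. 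Both routes are legitimate, and you are right that they should be reconcilable through a finite local analysis near the flipped arc.

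However, there are two concrete gaps. First, defining the relation by ``cells with overlapping nonempty interior'' and defining it by ``related by elementary moves inside $Q$'' are a priori not the same thing: a pair $\tau \in \TT^{\max}_\tri$, $\tau' \in \TT^{\max}_{\tri'}$ could in principle disagree outside $Q$ yet still have $\interior\widetilde{\cV}(\tau)\cap\interior\widetilde{\cV}(\tau')\neq\emptyset$. You need an argument that full-dimensional overlap forces the tracks to coincide away from $Q$ --- in the paper's language, that for every puncture $p$ not adjacent to the flipped arc, the chosen triangle $t_p$ is the same in both. This is true and not hard (the linear forms $\pot_{t_p}$ coincide for $\tri$ and $\tri'$ outside $Q$), but it is a step you must supply, not assume. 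Second, the ``finite case check'' you flag as the main obstacle is in fact the entire content of the theorem: it is what yields the statement that chains of $\lambda_k$ have length at most $2$, and it produces the precise cone relations $\cV(\tau_0)=\cV(\tau'_0)$, $\cV(\tau_0)\cup\cV(\tau_1)=\cV(\tau'_0)$, or $\cV(\tau_0)=\cV(\tau'_0)\cup\cV(\tau'_1)$. Merely observing that some binary relation exists via cell overlap is vacuous (any two polyhedral decompositions of the same space admit such a relation). To turn your plan into a proof you must carry out the enumeration, as the paper does: $\tau_0$ is of type II or III on each of the two triangles in $Q$ by completeness, the type-II triangle records the absence of one short branch, and the $4\times 4$ table of possibilities (with the two inadmissible $(p_1,p_1)$, $(p_3,p_3)$ diagonal entries excluded) together with Corollary \ref{cor:cone_elem_move} gives the chain-length and cone statements.

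A further subtlety worth noting: the paper's Lemma \ref{lem:decomp_slide} shows that in the generic (type III/III) case the sequence of elementary moves inside $Q$ is \emph{unique up to choosing the initial left or right splitting}, and both choices lead to the same $\tau'$; your cell-overlap reformulation would need to observe this to conclude that $\lambda_k$ is actually well-defined as a relation rather than a multi-valued correspondence that depends on the chosen path of moves. With these gaps filled, your strategy does converge with the paper's, so the approach is sound in outline but the crucial verifications are deferred rather than performed.
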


\subsection{Train track splittings and tropical cluster transformations (\cref{part:SS})}

T. Ishibashi and the author introduce the notion of \emph{sign stability} of the mutation loops as a cluster algebraic analogy of pseudo-Anosovness.
In \cite{IK20a}, we compare the sign stability and the pseudo-Anosovness of the mapping classes.
As a consequence, we find that the \emph{uniform} sign stability is equivalent to the \emph{generic} pseudo-Anosovness.
The former is the strongest sign stability in some sense, and the latter is not so special class of pseudo-Anosov mapping classes.

The sign stability is some asymptotic property of the sequence of \emph{signs} of a mutation loop.
We require that a mutation loop is sign-stable if the sign stabilizes to a sequence of ``strict'' signs, but the sequence of signs of a general pseudo-Anosov mapping class might contain zeroes.

As mentioned above, a pseudo-Anosov mapping class has a nice train track, called \emph{invariant track}.
It is known that the action of the pseudo-Anosov mapping class is represented as a specific sequence of splittings or shiftings of the invariant track.
Using the detail of the binary relation (\cref{introthm:birel_TT}) of the train tracks arising from a flip, we can translate this sequence of splittings or shiftings of the invariant track into the sequence of ``strict'' signs of the corresponding mutation loop.

\begin{thm}[\cref{thm:SS_pA}]
Let $\phi$ be a (general) pseudo-Anosov mapping class of a punctured surface $\Sigma$.
Then, $\phi^r$ is sign-stable for some $r \geq 1$.
\end{thm}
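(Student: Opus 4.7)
The plan is to translate the action of $\phi$ on its invariant train track into a sign sequence of a representation path of the associated mutation loop, and then show that after passing to a sufficiently large power all signs become strict and the sequence stabilizes.

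First, I would invoke Thurston's theory to associate to $\phi$ an invariant train track $\tau$ carrying the unstable measured foliation $F_+$, together with a factorization of the carrying map $\phi(\tau) \prec \tau$ as a finite sequence of elementary moves --- splittings, foldings, and shiftings --- starting and ending at $\tau$ (up to isotopy). By \cref{introthm:atlas}, $\tau$ corresponds to a maximal domain of linearity $C_\tau \subset \cA_\Sigma(\bR^\trop)$ containing $F_+$.

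Next, I would pick an ideal triangulation $\tri_0$ suited to $\tau$ and realize $\phi$ as a composition of flips $\tri_0 \to \tri_1 \to \cdots \to \tri_m = \phi^{-1}(\tri_0)$ such that each intermediate $\tri_i$ is suited to a complete train track $\tau_i$ appearing in the splitting sequence. This gives a representation path $\gamma_\phi$ of the mutation loop for $\phi$. Invoking \cref{introthm:birel_TT}, each flip $\tri_i \to \tri_{i+1}$ is witnessed by the binary relation between train tracks suited to $\tri_i$ and $\tri_{i+1}$; the sign $\epsilon_i$ of this flip evaluated at $F_+$ is strict precisely when the corresponding elementary move is a splitting or folding, and vanishes precisely when it is a shifting, since shifting does not redistribute the transverse weights.

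The main obstacle is handling the shifting moves, whose zero signs obstruct sign stability. Here a counting argument should suffice: the number of shiftings per carrying period of $\tau$ is combinatorially bounded by the complexity of $\tau$, while the number of splittings grows linearly in the power. Hence for $r$ large enough, one can reorganize the representation path of $\phi^r$ so that, outside a bounded transient initial segment, every flip corresponds to a genuine splitting, yielding a strictly nonzero ``bulk'' sign pattern at $F_+$. Finally, since $F_+$ is the unique attracting fixed direction for the projective action of $\phi^r$ on $\cX_\Sigma(\bR^\trop)$, any initial point is drawn toward $F_+$ under iteration, so its sign sequence eventually coincides with the strict pattern read off at $F_+$. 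This is the definition of sign stability in \cite{IK21}, concluding the proof.
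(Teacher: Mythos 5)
Your high-level plan---translate the RLS-word of $\tau^+_\phi$ into a representation path, check strictness of the sign at $[F^+_\phi,\mu^+_\phi]$, and conclude via North--South dynamics---matches the paper, and your final step is essentially the citation to \cite[Section 7.1]{IK20a}. However, the middle of your argument has two genuine problems, and it also misses the obstacle that actually forces one to pass to a power $\phi^r$.

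First, you misattribute the source of zero signs. You assert that a shifting "does not redistribute the transverse weights" and therefore contributes a zero sign, while splittings and foldings contribute strict signs. This is backwards: inspecting \cref{fig:split,fig:shift} and \cref{lem:trans_mat_elem}, shiftings are precisely the moves labeled $(+)$ or $(-)$, and the only elementary move with sign $(0)$ is the \emph{central} splitting. Central splittings occur exactly when the weight on the splitting branch sits on the wall between the left- and right-split cones; at an arational point such as $[F^+_\phi,\mu^+_\phi]$ this never happens, so in fact every move in the RLS-word---splitting \emph{or} shifting---already gives a strict sign. The paper's \cref{lem:unizip_path} relies on exactly this.

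Second, even granting your (incorrect) premise, the proposed ``counting argument'' cannot work. Sign stability in the sense of \cref{d:sign stability} demands that the \emph{entire} sign sequence $\boldsymbol{\epsilon}_\gamma(\phi^n(w))$ eventually equal a fixed strict sequence. Passing from $\phi$ to $\phi^r$ replaces the representation path by its $r$-fold concatenation: any zero that occurs in the path for $\phi$ reappears $r$ times, not once. There is no notion of ``reorganizing'' a representation path so that zeros migrate into a transient initial segment---the path is fixed once $\gamma$ is chosen, and the sign is evaluated componentwise at every horizontal edge of $\gamma$. So this step, as written, does not establish strictness of the stable sign.

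Third, you gloss over the real issue: you write "pick an ideal triangulation $\tri_0$ suited to $\tau$," but the invariant track $\tau^+_\phi$ need not be suited to any ideal triangulation, because it can have infinitesimal $(k>3)$-gons. The paper deals with this by deforming $\tau^+_\phi$ to a train track $s_\tri(\tau^+_\phi) \in \TT^{\max}_\tri$ suited to some $\tri$ refining the dual polygon decomposition $\tri'(\tau^+_\phi)$. After one unzipping period one arrives at a triangulation $\tri'$ containing $\tri'(\phi(\tau^+_\phi))$ but generally $\tri' \neq \phi(\tri)$; the discrepancy is a rotation inside each ideal $(k>3)$-gon, and closing it up by flipping inside those polygons \emph{would} produce zero signs (since $x^{\tri'}_\alpha([F^+_\phi,\mu^+_\phi]) = 0$ on those arcs). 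This---not shiftings---is why one must pass to a power: by \cref{lem:phi^r}, a suitable $r$ aligns the rotations so that $s_{\phi^r(\tri)}(\phi^r(\tau^+_\phi)) = \phi^r(s_\tri(\tau^+_\phi))$ and no polygon-interior flips are needed. Your proposal omits this step entirely, which is where the integer $r$ actually comes from.
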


By this theorem, we can calculate some entropies related to the pseudo-Anosov mapping classes (\cref{cor:entropy}).

\subsection{Future directions and problems}
Here, we give some directions for the continuation or application of this work.

\subsubsection{Completion of the $g$-vector fan}
On the tropicalization $\cX(\bR^\trop)$ of the variety $\cX$, there is some fans, for instance the $g$-vector fan \cite{GHKK, Rea14} (the Fock--Goncharov fan \cite{FG09}), the mutation fan \cite{Rea14} and the scattering fan \cite{Rea20}.
The $g$-vector fan is complete if and only if the cluster structure (\emph{i.e.,} the mutation class) is of finite.
T. Yurikusa studies the class of cluster algebras whose support of the $g$-vector fan is dense \cite{Yur20,Yur21}.
In particular, the cluster algebra of almost every marked surface is a member of this class \cite[Theorem 1.2]{Yur20}.
On the other hand, the scattering fan is complete \cite[Theorem 3.1]{Rea20} and the $g$-vector fan is a subfan of it.
The structure of the outside of the $g$-vector fan is very complicated, and it is called \emph{badlands} \cite{Nak21}.

The train tracks suited to an ideal triangulation give a complete fan on $\MF(\Sigma) \cong p(\cA_\Sigma(\bR^\trop))$ (\cref{cor:TT_fan}).
It is important that the subset $p(\cA_\Sigma(\bR^\trop)) \subset \cX_\Sigma(\bR^\trop)$ is located outside of the $g$-vector fan, so the following question arise:

\begin{quest}\label{quest:fans}
Are there some relationships between the fan given by train tracks suited to an ideal triangulation and the scattering fan on the outside of the $g$-vector fan?
\end{quest}

By using the sign of a representation path $\gamma$ (\cref{subsec:cluster_mod}) of a mutation loop $\phi$, we can define the \emph{sign fan} as the cones are parametrized by the possible signs of $\gamma$.
It is clear that the mutation fan is a refinement of the sign fan of any representation path of any mutation loop.
Also, the $g$-vector fan is a subfan of the mutation fan \cite[Conjecture 8.1]{Rea14}\footnote{It is a conjecture in \cite[Conjecture 8.1]{Rea14} but now we can prove it by \emph{sign-coherence conjecture} \cite{GHKK}}.
Moreover, for some pseudo-Anosov mapping classes, the following holds:
$\cC^\stab_\gamma \cap p(\cA_\Sigma(\bR^\trop)) = p(\cV(\tau^+_\phi))$.
Here, $\cC^\stab_\gamma$ is a cone of some sign fan, $\cV(\tau^+_\phi)$ is a cone of the ``weights'' of the invariant track $\tau^+_\phi$ of the mapping class $\phi$ (\cref{subsec:bdd_SS}).
It suggests the existence of the relationship of \cref{quest:fans}.

Also, there is a concept of $g$-vector fans for 2-term silting complexes over a finite-dimensional algebra \cite{AIR14}.
T. Yurikusa, P.-G. Plamondon and T. Aoki also study the class of the algebras whose $g$-vector fan is dense \cite{PY23,AY20}.
We hope that some techniques in the paper are useful to study the outside of the $g$-vector fan of these algebras.

\subsubsection{Higher train tracks}
There is a cluster ensemble $(\cA_{\frakg, \Sigma}, \cX_{\frakg, \Sigma})$ for a pair $(\frakg, \Sigma)$ of a semisimple Lie algebra $\frakg$ and a marked surface $\Sigma$.
The case $\frakg = \fsl_2$ is our case: $(\cA_{\fsl_2, \Sigma}, \cX_{\fsl_2, \Sigma}) = (\cA_{\Sigma}, \cX_{\Sigma})$.
In particular, the Goncharov--Shen potential function is defined for general semisimple Lie algebras $\frakg$ \cite{GS15}.

On the other hand, the geometric model of higher measured laminations is suggested in the case $\frakg = \fsl_3$ in \cite{DS20} and \cite{IK22} but these are only rational points of $\cA_{\fsl_3, \Sigma}(\bR^\trop)$ and $\cX_{\fsl_3, \Sigma}(\bR^\trop)$, respectively.

The train tracks give the geometric model of not only ``rational'' but also ``real'' measured laminations (foliations).
We hope that there is a higher analog of the train tracks which gives the geometric model of the ``real'' higher measured laminations.
Most of the discussions in \cref{part:TT_CA} are purely cluster algebraic, so if we can build the higher train track as some type of a graph, then it is expected that the claim similar to \cref{introthm:atlas} holds for them.
Also, the $g$-vector fan of the cluster structure of $(\frakg \neq \fsl_2, \Sigma)$ is not dense in general.
We hope that we can analyze the outside of the $g$-vector fan of $(\frakg \neq \fsl_2, \Sigma)$ by using the higher train tracks when the above aim is reached.

\subsection*{Organization of the paper}
In \cref{sec:dMF}, we review the theory of the space of the decorated measured foliations according to \cite{PP93}.
Here, we give the map corresponding to the tropicalized Goncharov--Shen potential function, geometrically.
In \cref{sec:TT}, we review the concept of train tracks and introduce the train tracks which suited to an ideal triangulation.
The main results of \cref{part:TT_CA} are essentially given here.
In \cref{sec:GSpot_Vvar}, we translate the results in \cref{sec:TT} via the tropicalized Goncharov--Shen potential function.
The concepts of cluster algebras is reviewed in \cref{sec:appCA}.

In \cref{sec:sign-stab}, we review the notion of sign stability of mutation loops and give the geometric meaning of the signs.
In \cref{sec:TTsplit}, we prove the sign stability of a general pseudo-Anosov mapping class by using the specific sequence of splittings and shiftings of the invariant track.
As a consequence, we compute some entropies of the pseudo-Anosov mapping classes.

\subsection*{Acknowledgements}
The author thanks Tsukasa Ishibashi for long-term discussion about this project.
He is also grateful to Yuji Terashima for telling him about the train track splittings and grateful to Osamu Iyama for sharing the problem about the $g$-vector fan of gentle algebras.
The author is partially supported by scientific research support of Research Alliance Center for Mathematical Sciences, Tohoku University.

\part{Train tracks in terms of cluster algebras}\label{part:TT_CA}
In this first half part, we translate some notions of the theory of train tracks into the language of the cluster algebras via the tropicalized Goncharov--Shen potential.
\section{Decorated measured foliations}\label{sec:dMF}
In this section, we recall the concepts around measured foliations.
A punctured surface $\Sigma = (\Sigma, P)$ is a connected oriented punctured surface $\Sigma$ with genus $g$, $h$ punctures $P = \{ p_1, p_2, \dots, p_h \}$ and no boundary components, such that
\begin{itemize}
\item[(S1)] $2g - 2 + h >0$,
\item[(S2)] when $g=0$, $h>3$.
\end{itemize}
The condition (S1) ensures the existence of an ideal triangulation and (S2) exclude the surface such that the quiver corresponding to any ideal triangulation of it has no arrows.

\subsection{The space of decorated measured foliations}
A \emph{foliatoin} on a punctured surface $\Sigma = (\Sigma, P)$ is a foliation $F$ on $\Sigma$ with isolated prong singularities such that the set of 1-pronged singularities of $F$ is contained in $P$.
We call the leaf of which a singular point is an endpoint \emph{singular leaf}.
\begin{figure}[h]
    \centering
    \begin{tikzpicture}
    \node  [fill, circle, red, inner sep = 1.5] (v1) at (0,0) {};
    \node [red] at (0.2,-0.23) {$x$};
    \draw (v1) -- (1.7,0);
    \draw (1.7,0.5) .. controls (0,0.5) and (-0.5,0.4) .. (-0.5,0) .. controls (-0.5,-0.4) and (0,-0.5) .. (1.7,-0.5);
    \draw (1.7,1) .. controls (0,1) and (-1,0.9) .. (-1,0) .. controls (-1,-0.9) and (0,-1) .. (1.7,-1);
    \draw (1.7,1.5) .. controls (0,1.5) and (-1.5,1.4) .. (-1.5,0) .. controls (-1.5,-1.4) and (0,-1.5) .. (1.7,-1.5);
    \end{tikzpicture}
    \qquad \quad
    \begin{tikzpicture}
    \node  [fill, circle, red, inner sep = 1.5] (v1) at (0,0) {};
    \node [red] at (0.2,-0.3) {$x$};
    \draw (-1.5,1.7) -- (v1) -- (1.7,0);
    \draw (-1.5,-1.7) -- (v1);
    \draw (-0.95,1.7) .. controls (0.05,0.6) and (0.45,0.45) .. (1.7,0.45);
    \draw (-0.95,-1.7) .. controls (0,-0.65) and (0.45,-0.45) .. (1.7,-0.45);
    \draw (-1.5,1.1) .. controls (-0.65,0.1) and (-0.65,-0.1) .. (-1.5,-1.1);
    \draw (-1.5,0.5) .. controls (-1.15,0.1) and (-1.15,-0.1) .. (-1.5,-0.5);
    \draw (-0.4,1.7) .. controls (0.15,1) and (0.6,0.9) .. (1.7,0.9);
    \draw (1.7,-0.9) .. controls (0.65,-0.85) and (0.15,-1.05) .. (-0.4,-1.7);
    \end{tikzpicture}
    \caption{Left: 1-pronged singularity, right: 3-pronged singularity at $x$.}
    \label{f:foliation_sing}
\end{figure}
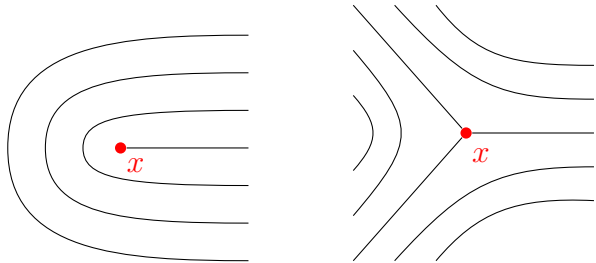

A \emph{transverse measure} for a foliation $F$ is a measure $\mu$ on each transverse arc which is equivalent to the Lebesgue measure on an interval of $\bR$ such that if two transverse arcs $\alpha$ and $\beta$ are isotopic through transverse arcs whose each endpoint remain in the same leaf then $\int_\alpha \mu = \int_\beta \mu$.

Such a pair $(F,\mu)$ is called a \emph{measured foliation} on $\Sigma$.
There is an equivalence relation on measured foliations generated by isotopy and \emph{Whitehead collapses} as illustrated in \cref{f:Whitehead} and the set of the equivalence classes of measured foliations (including the empty foliation $\emptyset$) is denoted by $\MF(\Sigma)$.
We will use the notation $[F, \mu]$ for the element of $\MF(\Sigma)$ which is represented by a measured foliation $(F, \mu)$.
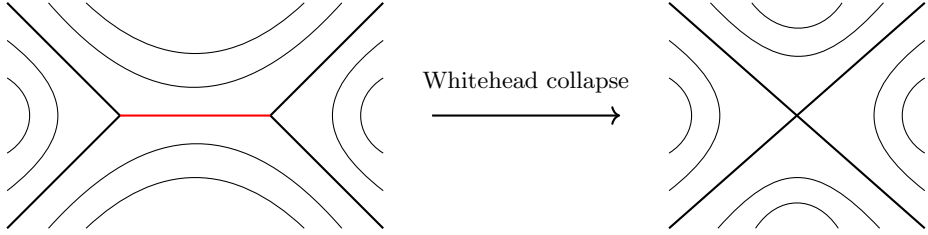
\begin{figure}[h]
    \centering\begin{tikzpicture}
    \draw [thick](-2.5,1.5) -- (-1,0) coordinate (v1);
    \draw [thick] (1,0) coordinate (v2) -- (2.5,1.5);
    \draw [thick](-2.5,-1.5) -- (v1);
    \draw [thick](2.5,-1.5) -- (v2);
    \draw (-1.95,1.5) .. controls (-0.65,0) and (0.6,0) .. (1.95,1.5);
    \draw (-1.45,1.5) .. controls (-0.5,0.6) and (0.5,0.6) .. (1.45,1.5);
    \draw (-2.5,1) .. controls (-1.6,0.3) and (-1.6,-0.3) .. (-2.5,-1);
    \draw (-2.5,0.5) .. controls (-2.1,0.25) and (-2.1,-0.25) .. (-2.5,-0.5);
    \begin{scope}[xscale=-1, yscale=-1]
    \draw (-1.95,1.5) .. controls (-0.65,0) and (0.6,0) .. (1.95,1.5);
    \draw (-1.45,1.5) .. controls (-0.5,0.6) and (0.5,0.6) .. (1.45,1.5);
    \draw (-2.5,1) .. controls (-1.6,0.3) and (-1.6,-0.3) .. (-2.5,-1);
    \draw (-2.5,0.5) .. controls (-2.1,0.25) and (-2.1,-0.25) .. (-2.5,-0.5);
    \end{scope}
    \draw [->, thick](3.15,0) -- (5.65,0);
    \node at (4.4,0.45) {\scriptsize Whitehead collapse};
    \draw [red, thick] (v1) edge (v2);
    \draw [thick](6.3,1.5) -- (8,0) coordinate (v1);
    \draw [thick] (8,0) coordinate (v2) -- (9.7,1.5);
    \draw [thick](6.3,-1.5) -- (v1);
    \draw [thick](9.7,-1.5) -- (v2);
    \draw (6.85,1.5) .. controls (7.75,0.55) and (8.3,0.55) .. (9.15,1.5);
    \draw (7.4,1.5) .. controls (7.75,1.05) and (8.3,1.05) .. (8.6,1.5);
    \draw (6.3,1) .. controls (7.2,0.3) and (7.2,-0.3) .. (6.3,-1);
    \draw (6.3,0.5) .. controls (6.7,0.25) and (6.7,-0.25) .. (6.3,-0.5);
    \begin{scope}[xscale=-1, yscale=-1, shift={(-7,0)}]
    \draw (-2.1,1.5) .. controls (-1.25,0.6) and (-0.75,0.6) .. (0.05,1.5);
    \draw (-1.55,1.5) .. controls (-1.25,1.05) and (-0.75,1.05) .. (-0.45,1.5);
    \draw (-2.7,1) .. controls (-1.8,0.3) and (-1.8,-0.3) .. (-2.7,-1);
    \draw (-2.7,0.5) .. controls (-2.3,0.25) and (-2.3,-0.25) .. (-2.7,-0.5);
    \end{scope}
    \end{tikzpicture}
    \caption{Whitehead collapsing}
    \label{f:Whitehead}
\end{figure}

A \emph{partial foliation} $(F, \mu)$ on $\Sigma$ is a measured foliation on a subsurface of $\Sigma$.
We call the subsurface \emph{support} of $(F, \mu)$ and write $|F|$.
We do not assume that the supports of partial measured foliations are connected.
In contrast to the partial measured foliations, we sometimes use the term \emph{total measured foliation} to refer to a partial measured foliation whose support is $\Sigma \setminus P$.
For a partial measured foliation $(F, \mu)$, one can obtain a total measured foliation $(F_0, \mu_0)$ of $\Sigma$ by collapsing each connected component of $\Sigma \setminus |F|$ to a spine of it and pull-back the transverse measure $\mu$ by the collapsing map.
It is obvious that the equivalence class of $F_0$ does not depend on the choice of the spine, so the collapsing operation gives a well-defined element in $\MF(\Sigma)$.
Thus, one can think that an element of $\MF(\Sigma)$ is represented by a partial measured foliation and we sometimes take a such representative.

Let us briefly mention the natural topology on $\MF(\Sigma)$. Let $\cS(\Sigma)$ denote the set of homotopy classes of non-peripheral simple closed curves on $\Sigma$.
Then it is known that each class $[c] \in \cS(\Sigma)$ has a representative $c_{F} \in [c]$ which attains the minimum measure for $\mu$ in that class for each measured foliation $(F, \mu)$. Moreover if two measured foliations $(F_1,\mu_1)$ and $(F_2,\mu_2)$ are equivalent, then we have $\int_{c_{F_1}}\mu_1 = \int_{c_{F_2}} \mu_2$. See \cite[Section 5.3]{FLP} for the proofs of these statements. Thus we get a well-defined map
\begin{align*}
    I_*^{\mathrm{MF}}: \MF(\Sigma) \to \bR_{\geq 0}^{\cS(\Sigma)}, \quad [F,\mu] \mapsto I^{\mathrm{MF}}_{[F,\mu]}
\end{align*}
with $I^{\mathrm{MF}}_{[F,\mu]}([c]):= \int_{c_F} \mu$ for $[c] \in \cS(\Sigma)$.
The empty foliation is identified with the zero vector in the image. The map $I_*^{\mathrm{MF}}$ is known to be injective (\cite[Theorem 6.13]{FLP}), and hence it pulls-back the weak topology on $\bR_{\geq 0}^{\cS(\Sigma)}$ to $\MF(\Sigma)$.

The space of \emph{decorated measured foliations} is a trivial bundle $\dMF(\Sigma) := \MF(\Sigma) \times \bR^P$.
One can think that an element $(\fol, (c_p)_{p \in P}) \in \dMF(\Sigma)$ is represented by a ``measured foliation with peripheral leaves'' $(\widetilde{F}, \widetilde{\mu})$ on $\Sigma$.
Namely, it restricts to the partial measured foliation $(F,\mu)$ with the support is the subsurface obtained from $\Sigma$ by removing a small open disk $D_p$ around each puncture $p \in P$ such that $[F, \mu] = \fol$, and it restricts to a foliation of $\overline{D_p} \setminus \{p\}$ whose leaves are nonsingular and surrounding $p$ with a ``transverse measure'' such that the arc connecting $p$ and a boundary point of $D_p$ has a measure $c_p$ for each $p \in P$.
Note that $c_p$ can be a negative value.
We call such a representative \emph{decorated measured foliation} and the leaves in $D_p$ \emph{peripheral leaves}.
In this point of view, the space of measured foliations $\MF(\Sigma)$ is considered as the zero-section of $\dMF(\Sigma)$.
\begin{figure}[h]
    \centering
    \begin{tikzpicture}[scale=1.1]
    \draw [fill=blue!20] (1.05,0) .. controls (0.6,1.2) and (-0.9,1.1) .. (-0.9,0) .. controls (-0.9,-1.1) and (0.6,-1.2) .. (1.05,0);
    \node  [fill, circle, red, inner sep = 1.5] (v1) at (0,0) {};
    \node [red] at (0.2,0) {$p$};
    \draw  (v1) ellipse (0.3 and 0.3);
    \draw  (v1) ellipse (0.6 and 0.6);
    \draw (2,0) -- (1.05,0);
    \draw (2,0.4) .. controls (1.25,0.4) and (0.75,1.15) .. (0,1.15) .. controls (-0.75,1.15) and (-1.25,0.7) .. (-1.25,0) .. controls (-1.25,-0.7) and (-0.75,-1.15) .. (0,-1.15) .. controls (0.75,-1.15) and (1.25,-0.4) .. (2,-0.4);
    \draw (2,0.85) .. controls (1.4,0.85) and (1,1.45) .. (0,1.45) .. controls (-0.95,1.45) and (-1.6,0.95) .. (-1.6,0) .. controls (-1.6,-0.95) and (-0.95,-1.45) .. (0,-1.45) .. controls (1,-1.45) and (1.4,-0.85) .. (2,-0.85);
    \end{tikzpicture}
    \caption{The local model of a decorated measured foliation near a puncture $p \in P$. The blue region shows $D_p$.}
    \label{fig:dec_fol}
\end{figure}
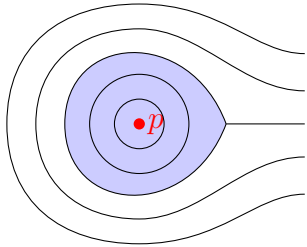

\subsection{Coordinate of \texorpdfstring{$\dMF(\Sigma)$}{the space of decorated measured foliations}}
An \emph{ideal arc} in $\Sigma$ is the isotopy class of a non-contractible curve $\alpha$ such that $\partial\alpha \subset P$.
An \emph{ideal triangulation} $\tri$ of $\Sigma$ is a collection of ideal arcs such that
\begin{itemize}
\item each pair of ideal arcs in $\tri$ can intersect only at their endpoints;
\item each region complementary to $\bigcup \tri$ is a triangle whose edges (resp. vertices) are ideal arcs (resp. punctures).
\end{itemize}
We will refer to the closure of a complementary region of $\bigcup \tri$ as an \emph{ideal triangle} of $\tri$.
The conditions (S1) and (S2) ensure that such an ideal triangulation $\tri$ exists, and in particular the number $-3 \chi(\Sigma) > 0$ gives the number of ideal arcs in $\tri$.
Let $\Tri(\Sigma)$ denote the graph whose vertices are ideal triangulations of $\Sigma$ and adjacent vertices are related by a flip.

For each ideal arc $\alpha$ in $\Sigma$, we define the function $\sfa_\alpha$ on $\MF(\Sigma)$ as 
\begin{align*}
    \sfa_\alpha(\fol) := \frac{1}{2} \int_\alpha \mu
\end{align*}
for a measured foliation $(F, \mu)$ which represents $\fol$.
Also, we extend $\sfa_\alpha$ to $\dMF(\Sigma)$ as 
\begin{align*}
    \sfa_\alpha(\fol, (c_p)_p) := \int_\alpha \widetilde{\mu} = \sfa_\alpha(\fol) + \frac{1}{2} c_{p_1} + \frac{1}{2} c_{p_2}
\end{align*}
where $(\widetilde{F}, \widetilde{\mu})$ is a decorated meausred foliation represents $(\fol, (c_p)_p)$ and $\partial \alpha = \{p_1, p_2\}$.

\begin{prop}[{\cite{PP93}}]\label{prop:A-coord}
For each ideal triangulation $\tri$ of $\Sigma$, the map
\[
\mathbf{\sfa}^\tri: \dMF(\Sigma) \to \bR^\tri,\ (\fol, \mathbf{c}) \mapsto (\sfa_\alpha(\fol, \mathbf{c}))_{\alpha \in \tri}
\] 
gives a global coordinate of $\dMF(\Sigma)$.
\end{prop}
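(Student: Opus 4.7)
\emph{Strategy.} The plan is to invert $\mathbf{\sfa}^\tri$ explicitly by a triangle-by-triangle construction, then verify the inverse.

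\emph{Local model.} In an ideal triangle $T$ of $\tri$ with vertices $v_1, v_2, v_3$ and opposite sides $\alpha_1, \alpha_2, \alpha_3$, every partial measured foliation admits, after Whitehead collapses moving singularities toward the vertices, a standard ``three-fan'' representative: the fan at $v_i$ consists of leaves joining $\alpha_j$ and $\alpha_k$ with transverse measure $x_{v_i}^T \geq 0$. The induced undecorated measure on $\alpha_i$ is then $x_{v_j}^T + x_{v_k}^T$, and writing $y_v^T := x_v^T + c_v$ for the combined corner weight of a decorated foliation, the defining formula of $\mathbf{\sfa}^\tri$ reads
$$2\sfa_{\alpha_i} = (x_{v_j}^T + x_{v_k}^T) + (c_{v_j} + c_{v_k}) = y_{v_j}^T + y_{v_k}^T.$$
This $3 \times 3$ linear system inverts to $y_{v_i}^T = \sfa_{\alpha_j} + \sfa_{\alpha_k} - \sfa_{\alpha_i}$.

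\emph{Inverse construction.} Given $(a_\alpha)_{\alpha \in \tri} \in \bR^\tri$, I would define $y_v^T := a_{\alpha_j} + a_{\alpha_k} - a_{\alpha_i}$ at each corner $(T, v=v_i)$, then set $c_v := \min_{T \ni v} y_v^T$ and $x_v^T := y_v^T - c_v \geq 0$. In each triangle, build the standard three-fan foliation with these fan measures; the glue condition across any shared internal edge $\alpha$ with endpoints $v_1, v_2$ is automatic, since the induced undecorated measure on $\alpha$ from either adjacent triangle equals $2 a_\alpha - c_{v_1} - c_{v_2}$. Attach a peripheral foliation of transverse measure $c_v$ around each puncture. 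This yields a decorated foliation $(\fol, \mathbf{c}) \in \dMF(\Sigma)$, and by construction its $\mathbf{\sfa}^\tri$-coordinates recover the input, giving surjectivity.

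\emph{Bijectivity and the main obstacle.} The composition $\bR^\tri \to \dMF(\Sigma) \to \bR^\tri$ is the identity by construction. For injectivity, the key fact is a characterization of the zero-section $\MF(\Sigma) \subset \dMF(\Sigma) = \MF(\Sigma) \times \bR^P$: an element $\fol \in \MF(\Sigma)$ (the case $\mathbf{c} = 0$) is exactly one whose standard representative satisfies $\min_{T \ni v} x_v^T = 0$ at every puncture $v$, since any positive common fan measure at $v$ would correspond to leaves wrapping around $v$ and hence would be genuine peripheral data rather than part of $\fol$. From this, $c_v = \min_{T \ni v} y_v^T$ is forced for any $(\fol, \mathbf{c}) \in \dMF(\Sigma)$, uniquely determining $\mathbf{c}$ and then $\fol$ via the reconstructed fan measures. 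I expect this minimality characterization of $\MF(\Sigma)$ inside $\dMF(\Sigma)$, together with the globality of the normal form (compatibility of local Whitehead reductions at different punctures), to be the main obstacle of the proof. Continuity of both maps is then immediate, since the whole construction is piecewise-linear in the $a_\alpha$.
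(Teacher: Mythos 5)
Your proof proposal is correct and takes essentially the same route as the paper: both reconstruct a decorated foliation triangle-by-triangle from the corner weights $y_v^T = \sfa_{\alpha_j}+\sfa_{\alpha_k}-\sfa_{\alpha_i}$, glue across internal edges (which works because both adjacent triangles assign the same induced measure $2\sfa_\alpha - c_{v_1} - c_{v_2}$), and read off the decoration $c_p$ as the minimum corner weight around $p$. The only difference is bookkeeping: the paper introduces an auxiliary shift $v>0$ so that all corner weights $\sfa'_{\alpha_j}+\sfa'_{\alpha_k}-\sfa'_{\alpha_i}$ are nonnegative, builds the resulting ``thick'' foliation, and then cuts off the peripheral disks $D_p$ and corrects by $-v$, whereas you normalize directly at the corners via $x_v^T = y_v^T - \min_{T\ni v} y_v^T$ and carry the possibly negative $c_v$ as formal decoration data; these yield the same $c_p = \min_{T\ni p}y_p^T$, and your injectivity discussion (the minimality characterization of the zero-section) is the ingredient the paper defers to \cite{PP93}.
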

\begin{proof}[Sketch of the proof]
The reconstruction of a decorated measured foliation from a given numbers $(\sfa_\alpha)_{\alpha} \in \bR^\tri$ is given as follows:
First, we take a number $v>0$ such that the numbers $\sfa'_\alpha := \sfa_\alpha + v$ are positive and satisfy triangle inequality for every ideal triangle of $\tri$ with edges $\alpha, \beta, \gamma$:
\[
|\sfa'_\alpha - \sfa'_\beta| \leq \sfa'_\gamma \leq \sfa'_\alpha + \sfa'_\beta.
\]
Then, for each ideal triangle $t$ of $\tri$, we can put the measured foliation such that each leaf surrounds a puncture (see \cref{fig:fol_tri}) and the measure of an ideal arc which is isotopic to an edge $\alpha$ of $t$ is given by $2\sfa'_\alpha$.
By gluing them, we obtain the decorated measusred foliation, that is, there is a component whose support is $D_p$ and whose all the leaves surround a puncture $p$.
By cutting off them and shrinking each complementary region to the puncture contained in the region, we obtain the measured foliation $(F, \mu)$ of $\Sigma$.
Let $c_p$ be the sum of the measure of an arc connecting $p$ and a point of the boundary of $D_p$ and $-v$.
Therefore, we have
\begin{align*}
    \int_\alpha \mu = 2 \sfa'_\alpha - (c_p + v) - (c_q + v) = 2 \sfa_\alpha - c_p - c_q
\end{align*}
for an ideal arc $\alpha \in \tri$ such that $\partial \alpha = \{p, q\}$.
Thus we obtain the element $([F, \mu], (c_p)_p) \in \dMF(\Sigma)$ satisfying $\sfa_\tri([F, \mu], (c_p)_p) = (\sfa_\alpha)_\alpha$.
\end{proof}

We call this coordinate as an \emph{$\cA$-coordinate} of $\dMF(\Sigma)$.
The coordinate transformation for a flip along $\kappa \in \tri$ is given by \cref{fig:trop_A_flip}.

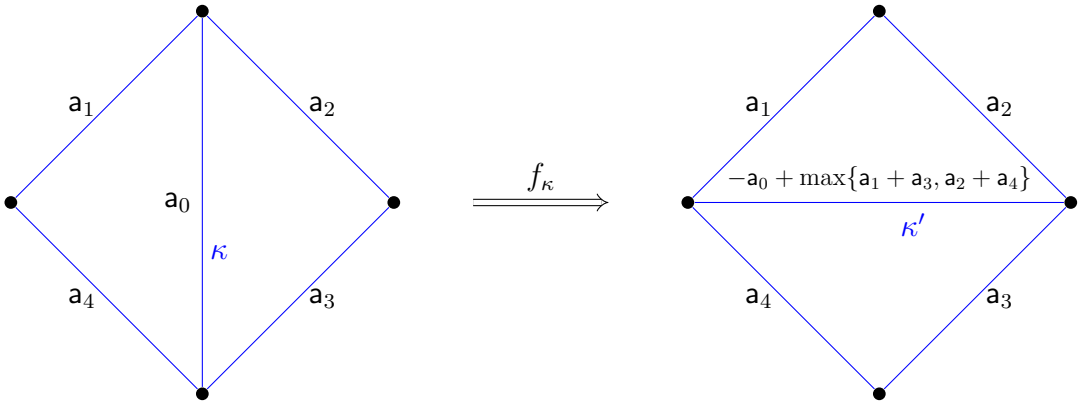
\begin{figure}[h]
\[
\begin{tikzpicture}[scale=0.9]
\path(0,0) node [fill, circle, inner sep=1.7pt] (x1){};
\path(135:4) node [fill, circle, inner sep=1.7pt] (x2){};
\path(0,4*1.4142) node [fill, circle, inner sep=1.7pt] (x3){};
\path(45:4) node [fill, circle, inner sep=1.7pt] (x4){};
\draw [blue] (x1) to node[midway,left, black]{$\sfa_4$} (x2) 
to node[midway,left, black]{$\sfa_1$} (x3) 
to node[midway,right, black]{$\sfa_2$} (x4) 
to node[midway,right, black]{$\sfa_3$} (x1) 
to node[midway,left, black]{$\sfa_0$} (x3);

\draw[-implies, double distance=2pt](4,2*1.4142) to node[midway,above]{$f_{\kappa}$} (6,2*1.4142);

\begin{scope}[xshift=10cm]
\path(0,0) node [fill, circle, inner sep=1.7pt] (x1){};
\path(135:4) node [fill, circle, inner sep=1.7pt] (x2){};
\path(0,4*1.4142) node [fill, circle, inner sep=1.7pt] (x3){};
\path(45:4) node [fill, circle, inner sep=1.7pt] (x4){};
\draw  [blue] (x1) to node[midway,left, black]{$\sfa_4$} (x2) 
to node[midway,left, black]{$\sfa_1$} (x3) 
to node[midway,right, black]{$\sfa_2$} (x4) 
to node[midway,right, black]{$\sfa_3$} (x1);
\draw [blue] (x2) to node[midway,above, black]{\scalebox{0.8}{$-\sfa_0+\max\{\sfa_1+\sfa_3,\sfa_2+\sfa_4\}$}} (x4);
\end{scope}
\node [blue] at (0.25,2.1) {$\kappa$};
\node [blue] at (10.5,2.5) {$\kappa'$};
\end{tikzpicture}
\]
\caption{The coordinate transformation for a flip. Here the transformation rule is still valid even when some of edges are identified.}
\label{fig:trop_A_flip}
\end{figure}

Therefore, the space $\dMF(\Sigma)$ of decorated measured foliations is a PL manifold with the PL atlas given by the $\cA$-coordinates $\mathbf{\sfa}^\tri$ for the ideal triangulations $\tri$.

By the reconstruction of a decorated (partial) measured foliation from an element of $\bR^\tri$ described in the proof of \cref{prop:A-coord}, an element $\widetilde{\fol} \in \dMF(\Sigma)$ has a representative $(\widetilde{F}_\tri, \widetilde{\mu}_\tri)$ such that it has at most one singularity in each ideal triangle of $\tri$.
Its support locally likes in the configuration \cref{fig:fol_tri} in each ideal triangle of $\tri$.
We call this representative a \emph{canonical model} of $\widetilde{\fol}$ with respect to $\tri$.
We note that the leaves peripheral to a vertex drown in \cref{fig:fol_tri} might be missing.

\begin{figure}[h]
    \centering
    \begin{tikzpicture}[scale=0.85]
    \draw [blue](0,3.5)--(-2.65,-1)--(2.65,-1)-- cycle;
    \node [fill, circle, inner sep=1.3pt] at (0,3.5) {};
    \node [fill, circle, inner sep=1.3pt] at (-2.65,-1) {};
    \node [fill, circle, inner sep=1.3pt] at (2.65,-1) {};
    \draw (-0.25,3.05) .. controls (-0.15,2.95) and (0.15,2.95) .. (0.25,3.05);
    \draw (-2.35,-0.55) .. controls (-2.2,-0.6) and (-2.05,-0.85) .. (-2.05,-1);
    \draw (2.05,-1) .. controls (2.05,-0.85) and (2.15,-0.6) .. (2.4,-0.55);
    \draw (-0.5,2.65) .. controls (-0.25,2.4) and (0.25,2.4) .. (0.5,2.65);
    \draw (-2.1,-0.1) .. controls (-1.8,-0.3) and (-1.55,-0.65) .. (-1.55,-1);
    \draw (1.55,-1) .. controls (1.55,-0.6) and (1.85,-0.25) .. (2.15,-0.15);
    \draw (-0.8,2.15) .. controls (-0.35,1.85) and (0.35,1.85) .. (0.8,2.15);
    \draw (-1.85,0.35) .. controls (-1.35,0.1) and (-1.05,-0.45) .. (-1.05,-1);
    \draw (1.05,-1) .. controls (1.05,-0.4) and (1.45,0.1) .. (1.85,0.3);
    \draw (-1.05,1.7) .. controls (-0.4,1.25) and (0.4,1.25) .. (1.05,1.7);
    \draw (-1.6,0.8) .. controls (-0.9,0.45) and (-0.55,-0.05) .. (-0.55,-1);
    \draw (0.55,-1) .. controls (0.55,-0.05) and (0.9,0.45) .. (1.6,0.75);
    \draw [thick](-1.3,1.25) -- (0,0.5) -- (1.35,1.2);
    \draw [thick](0,0.5) -- (0,-1);
    \end{tikzpicture}
    \caption{The local model of $\widetilde{F}_\tri$.}
    \label{fig:fol_tri}
\end{figure}

\subsection{The map \texorpdfstring{$\pot$}{w}}\label{subsec:pot_geom}
We define $\pot_{p}(\fol, (c_p)_p) := c_p$ for each $p \in P$ and $(\fol, (c_p)_p) \in \dMF(\Sigma)$.
They form the natural projection
\[
\pot: \dMF(\Sigma) \to \bR^P,\ (\fol, \mathbf{c}) \mapsto (\pot_p(\fol, \mathbf{c}))_p = \mathbf{c}.
\]
It is clear that $\pot^{-1}(0)$ gives the zero-section of the bundle $\dMF(\Sigma) \to \MF(\Sigma)$.
Thus $\pot^{-1}(0) \cong \MF(\Sigma)$.

\begin{defi}\label{def:T_p}
We denote by $T_{\tri, p}$ the set of ideal triangles of $\tri$ whose boundaries contain $p$.
\end{defi}
\begin{lem}
For an ideal triangulation $\tri$ and a puncture $p \in P$, the map $\pot_p$ is expressed as
\begin{align}\label{eq:pot_geom}
    \pot_{p} = \min \big\{ \sfa_{\alpha^1_{p,t}} + \sfa_{\alpha^2_{p,t}} - \sfa_{\alpha^0_{p,t}} \ \big|\  t \in T_{\tri, p} \big\}
\end{align}
by the $\cA$-coordinate $\mathbf{\sfa}^{\tri} = (\sfa_\alpha)_{\alpha \in \tri}$.
Here, the ideal arcs $\alpha^k_{p,t}$ ($k=0,1,2$) are defined as follows: 
\begin{align*}
    \centering
    \begin{tikzpicture}[auto]
    \node[fill, circle, inner sep=1.5pt, red] (v1) at (0,1.5) {};
    \node[fill, circle, inner sep=1.5pt] (v2) at (-1.25,-0.5) {};
    \node[fill, circle, inner sep=1.5pt] (v3) at (1.25,-0.5) {};
    \draw  (v1) edge node[swap] {$\alpha_{p,t}^2$} (v2);
    \draw  (v2) edge node[swap] {$\alpha_{p,t}^0$} (v3);
    \draw  (v3) edge node[swap] {$\alpha_{p,t}^1$} (v1);
    \node at (0,0.25) {$t$};
    \node[red] at (0,1.81) {$p$};
    \end{tikzpicture}
\end{align*}
\end{lem}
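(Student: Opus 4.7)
The plan is to read the left-hand side off the \emph{canonical model} constructed in the proof of \cref{prop:A-coord}. Fix $(\fol,(c_q)_q)\in \dMF(\Sigma)$ with $\cA$-coordinates $(\sfa_\alpha)_{\alpha\in\tri}$, and choose $v>0$ large enough that $\sfa'_\alpha := \sfa_\alpha + v > 0$ satisfies the triangle inequality in every ideal triangle. In each triangle $t$ of $\tri$ we place the local foliation of \cref{fig:fol_tri} so that the transverse measure of the leaves isotopic to an edge $\alpha$ of $t$ is $2\sfa'_\alpha$, and then glue along $\tri$ to build the decorated representative.

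Next I would carry out the elementary ``corner-weight'' calculation inside a single ideal triangle: if the three edges of $t$ have $\widetilde\mu$-measures $2\sfa'_{\alpha^0_{p,t}}, 2\sfa'_{\alpha^1_{p,t}}, 2\sfa'_{\alpha^2_{p,t}}$ with $\alpha^0_{p,t}$ opposite $p$, then the arcs of \cref{fig:fol_tri} that are peripheral to the corner $p$ form a family whose total transverse measure equals
\begin{equation*}
w_p^t \;:=\; \sfa'_{\alpha^1_{p,t}}+\sfa'_{\alpha^2_{p,t}}-\sfa'_{\alpha^0_{p,t}}
\;=\; \sfa_{\alpha^1_{p,t}}+\sfa_{\alpha^2_{p,t}}-\sfa_{\alpha^0_{p,t}}+v.
\end{equation*}
This is immediate from splitting each edge into two sub-arcs at its midpoint relative to the three corner leaf families and solving the resulting $3\times 3$ linear system; positivity of the $w_p^t$ is exactly the triangle inequality.

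Now I would glue the triangles around $p$. The family of leaves peripheral to $p$ from adjacent triangles match along the edges into $p$ (the matching on each edge is forced because both sides share the same partition of the edge by sub-arc measures). Thus the union of these peripheral leaves, viewed as a collar of $p$, is foliated by circles around $p$ up to transverse depth $\min_{t\in T_{\tri,p}} w_p^t$, and beyond that depth the leaves emerging from the various incident triangles no longer wrap completely around $p$ (the triangle attaining the minimum is the first to ``run out'' of corner leaves). Hence the maximal round disk $D_p$ of the canonical model produced in \cref{prop:A-coord} has the property that the $\widetilde\mu$-measure of an arc from $p$ to a point of $\partial D_p$ equals precisely $\min_{t\in T_{\tri,p}} w_p^t$.

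By the definition of $c_p$ in the proof of \cref{prop:A-coord}, we have $c_p = \min_{t\in T_{\tri,p}} w_p^t - v$, and the explicit $v$'s cancel:
\begin{equation*}
\pot_p(\fol,(c_q)_q)\;=\;c_p\;=\;\min_{t\in T_{\tri,p}}\bigl(\sfa_{\alpha^1_{p,t}}+\sfa_{\alpha^2_{p,t}}-\sfa_{\alpha^0_{p,t}}\bigr),
\end{equation*}
which is the claimed identity \eqref{eq:pot_geom}. The main obstacle is the corner-weight calculation together with the check that the various $D_p$'s can be chosen consistently (independently of $v$) and that the minimizing triangle is precisely the one that bounds the collar from outside; everything else is a routine unwinding of definitions.
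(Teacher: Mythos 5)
Your proof is correct and takes essentially the same route as the paper's. Both arguments work inside the canonical model of $\widetilde{\fol}$ with respect to $\tri$, identify $\sfa_{\alpha^1_{p,t}}+\sfa_{\alpha^2_{p,t}}-\sfa_{\alpha^0_{p,t}}$ as the corner weight at $p$ in the triangle $t$ (you by solving the $3\times 3$ linear system for the three corner measures, the paper by decomposing $\alpha^0_{p,t}$ at the singular leaf into $\beta_1,\beta_2$ and cancelling against the matching subarcs $\gamma_1,\gamma_2$ of $\alpha^1_{p,t},\alpha^2_{p,t}$, leaving $\int_{\delta_{p,t}}\widetilde\mu_\tri$), and then recognize the minimum over $t\in T_{\tri,p}$ as the decoration $c_p$ since that is exactly where the glued peripheral bands stop closing up into circles around $p$; the auxiliary shift $v$ in your bookkeeping cancels as you note and carries no content.
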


\begin{proof}
For $\widetilde{\fol} \in \dMF(\Sigma)$, take a canonical model $(\widetilde{F}_\tri, \widetilde{\mu}_\tri)$ w.r.t. $\tri$.
Fix an ideal triangle $t \in T_{\tri, p}$.
If there is no singular leaf whose singular endpoint is contained in $t$ and which intersects to $\alpha^0_{p,t}$, then there are only leaves in $t$ which surround the corner $p$.
Otherwise, let us decompose $\alpha^0_{p,t}$ at the intersection point with the singular leaf into two arcs $\beta_1$ and $\beta_2$ such that $\beta_i$ is adjacent to $\alpha^i$ for $i=1,2$.
Also, let $\gamma_i$ be a subarc of $\alpha^i_{p,t}$ which is equivalent to $\beta_i$ for $i=1,2$.
We note that the subtractions $\alpha^1_{p,t} \setminus \gamma_1$ and $\alpha^2_{p,t} \setminus \gamma_2$ are equivalent to the arc $\delta_{p, t}$ which connects $p$ and the singular point in $t$ if exists.
Then, we have
\begin{align*}
    2\big( \sfa_{\alpha^1_{p,t}}(\widetilde{\fol}) + \sfa_{\alpha^2_{p,t}}(\widetilde{\fol}) - \sfa_{\alpha^0_{p,t}}(\widetilde{\fol}) \big)
    &=
    \int_{\alpha^1_{p,t}} \widetilde{\mu}_\tri + \int_{\alpha^2_{p,t}} \widetilde{\mu}_\tri - \int_{\alpha^0_{p,t}} \widetilde{\mu}_\tri\\
    &=
    \int_{\alpha^1_{p,t}} \widetilde{\mu}_\tri + \int_{\alpha^2_{p,t}} \widetilde{\mu}_\tri - \bigg( \int_{\beta_1} \widetilde{\mu}_\tri + \int_{\beta_2} \widetilde{\mu}_\tri \bigg)\\
    &=
    \int_{\alpha^1_{p,t}} \widetilde{\mu}_\tri - \int_{\gamma_1} \widetilde{\mu}_\tri  + \int_{\alpha^2_{p,t}} \widetilde{\mu}_\tri - \int_{\gamma_2} \widetilde{\mu}_\tri\\
    &= 2\int_{\delta_{p, t}} \widetilde{\mu}_\tri.
\end{align*}
Therefore, (RHS) of \eqref{eq:pot_geom} is the measure of the peripheral leaves surrounding $p$, it is nothing but $\pot_p$.
\end{proof}

\section{Train track atlases and cluster coordinates}\label{sec:TT}
A train track, introduced by W.Thurston in \cite{Th}, is a combinatorial model for the measured foliations on a surface\footnote{In \cite{Th}, he considers the measured `laminations', but there is a PL isomorphism between the space of measured foliations and the space of measured laminations \cite{CB}.}.
That is, it is well-known that the ``maximal'' train tracks give a PL atlas on the space $\MF(\Sigma) \setminus \{\emptyset\}$ of measured foliations without the empty foliation.
The theory of train track is mainly described by the pants decompositions.
For instance, the Dehn--Thurston coordinate, which is a global coordinate on the space of measured foliations on a surface associated with a pants decomposition of the surface.
In this section, we try to describe the train tracks by the ideal triangulations.



\subsection{Basic notions of train tracks}\label{subsec:traintrack}
In this subsection, we review the definition of train tracks and their properties. The main reference of this part is \cite{PH}.

\begin{defi}\label{def:traintrack}
A \textit{train track} (or simply \emph{track}) $\tau$ is a (non-necessarily connected) graph with trivalent vertices\footnote{Trivalent train tracks are called \emph{generic} train tracks in \cite{PH}. We can deform any train tracks to a generic one equivalently by \emph{combing}.} embedded in a surface $\Sigma \setminus P$
such that:
\begin{enumerate}
    \item The interior of each edge of $\tau$ is $C^1$.
    \item For each vertex $v$ of $\tau$, there is a well-defined one dimensional tangent space $T_v \tau \subset T_v \Sigma$.
    \item For each connected component $S$  of the complement of $\tau$, the double $D(S)$ of $S$ along the $C^1$ edges of $S$ has the negative Euler characteristic: $\chi(D(S)) < 0$.
\end{enumerate}
\end{defi}
In brief, a train track is a graph embedded into a surface smoothly also vertices look like railways of trains.
Each connected component of the complement of a train track has some cusps, we call it $k$-gon when it has $k$ cusps.
The last condition of the train tracks excludes the following types of connected components of the complement of the train tracks:
\begin{itemize}
    \item annulus,
    \item once punctured null-gon,
    \item unpunctured $k$-gon ($k \leq 2$).
\end{itemize}
We call the edges of train tracks \emph{branches} and the vertices  \emph{switches}.

A subgraph $\tau'$ of a trian track $\tau$ is called \emph{subtrack} of $\tau$.

Let $\tau$ be a train track and let $B(\tau)$ denote the collection of whose branches.
Then, let $V(\tau) \subset \bR^{B(\tau)}$ denote the closed convex cone consisting of the maps $\nu$ satisfying
\begin{align}\label{eq:switch_cond}
\begin{cases}
\nu(b) \geq 0 & \mbox{for every } b \in B(\tau),\\
\nu(b_0) = \nu(b_1) + \nu(b_2) & \mbox{for every switch $s$ in $\tau$ like in the configuration in \cref{fig:switch_cond}.}
\end{cases}    
\end{align}

\begin{figure}[h]
    \centering
    \begin{tikzpicture}[auto]
    \draw[very thick, red] (-1.3,0) .. controls (1,0) and (1,-0.08) .. (1.5,0.5);
    \draw[very thick, red] (-1.3,0) .. controls (1,0) and (1,0.08) .. (1.5,-0.5);
    \node at (0.5,0.25) {$s$};
    \node at (1.75,0.75) {$b_1$};
    \node at (1.75,-0.75) {$b_2$};
    \node at (-1,0.25) {$b_0$};
    \end{tikzpicture}
    \caption{Switch condition.}
    \label{fig:switch_cond}
\end{figure}
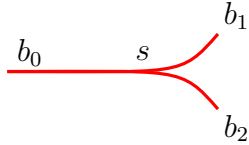

An element of $V(\tau)$ is called \emph{transverse measure} of $\tau$.
A train track $\tau$ is said to be \emph{recurrent} if there is a transverse measure $\nu \in V(\tau)$ such that $\nu(b) > 0$ for all $b \in B(\tau)$.
Moreover, the recurrent train track $\tau$ is \emph{complete} if it is not a proper subtrack of any recurrent train track.

\begin{thm}[{\cite[Corollary 1.4.2]{PH}}]\label{thm:subtrack}
\begin{enumerate}
    \item If $g>1$ or $h>1$, then any recurrent train track is a subtrack of a complete train track, each of whose complementary regions is either once punctured monogon or unpunctured trigon.
    \item If $g=h=1$, then any recurrent train track is a subtrack of a complete train track, whose unique complementary region is once punctured bigon.
\end{enumerate}
\end{thm}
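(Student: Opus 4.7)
The plan is to prove this by iteratively enlarging a recurrent train track $\tau$, adding one smoothly-embedded branch at a time so as to subdivide its complementary regions, while preserving recurrence at each step. The defining inequality $\chi(D(S))<0$ on each complementary region $S$ rules out annuli, once-punctured null-gons, and unpunctured $k$-gons with $k\leq 2$. Hence every such $S$ is an unpunctured $k$-gon with $k\geq 3$, a once-punctured $k$-gon with $k\geq 1$, or a component of strictly higher topological complexity carrying at least one cusp on its boundary. The objective is to reduce every region to the model pieces listed in the statement.

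The elementary extension move I would use is the following: given a region $S$ not yet of the target type, select a smooth arc $\beta$ properly embedded in $S$ whose two endpoints lie at cusps of $S$, and whose tangent at each endpoint agrees with one of the two boundary arcs meeting at that cusp. Then $\tau\cup\beta$ is a train track in which $\beta$ becomes a new branch and the two endpoints become new trivalent switches. A case-by-case choice of $\beta$ reduces unpunctured $k$-gons with $k\geq 4$ to a trigon plus a $(k-1)$-gon, once-punctured $k$-gons with $k\geq 2$ to a once-punctured monogon plus a smaller region, and higher-genus or higher-puncture components to simpler ones by non-separating cuts. The main obstacle is to verify that this extension preserves recurrence: given a strictly positive $\nu\in V(\tau)$, one would extend it to $\tau\cup\beta$ by assigning $\beta$ a small positive weight $\varepsilon$ and compensating at each of the two new switches by redistributing weight along a cycle of positive $\nu$-weight through that switch, which exists precisely because $\nu$ is strictly positive. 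Choosing $\varepsilon$ sufficiently small keeps the resulting extension strictly positive and compatible with all switch conditions, so $\tau\cup\beta$ is again a recurrent train track.

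Iterating the move strictly decreases the total complexity of the ``non-model'' complementary regions (measured, for instance, by the sum of their cusp counts and the quantities $-\chi(D(S))$), so the process terminates. An Euler-characteristic count on the resulting complement then forces the complementary regions to be exactly the prescribed pieces: in case (1), a combination of once-punctured monogons and unpunctured trigons; in case (2), the single exceptional configuration of a once-punctured bigon, which is the only option compatible with $\chi(\Sigma)=-1$ on the once-punctured torus. The hypothesis $g>1$ or $h>1$ in (1) is used precisely to exclude this exceptional bigon case and guarantee that the standard monogon/trigon model can be realized.
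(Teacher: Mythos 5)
Your overall strategy—iteratively enlarging $\tau$ by adding branches inside its complementary regions, preserving recurrence, and terminating by a complexity count—is the right one and is in the spirit of Penner--Harer's proof of their Corollary 1.4.2, which the paper cites rather than reproves. However, the step that carries all the content of the theorem, namely that recurrence survives adding a branch, is exactly where your argument has a gap.

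To make $\tau'=\tau\cup\beta$ recurrent you need a strictly positive point of $V(\tau')$, and the compensating ``cycle'' along which you propose to absorb the weight $\varepsilon$ placed on $\beta$ must be a \emph{closed trainpath} in $\tau'$ through $\beta$, i.e.\ a cyclic sequence of branches making a legal (tangent) turn at every switch. Redistributing $\varepsilon$ along a cycle of the underlying graph that is not a trainpath violates a switch condition somewhere. But the existence of a closed trainpath through $\beta$ is essentially what it means for $\tau'$ to be recurrent at $\beta$, so appealing to it here is close to circular. What is actually true and must be proved is that for a \emph{well-chosen} $\beta$ one can splice closed trainpaths of $\tau$ (which exist through every branch of $\tau$ by recurrence) into a closed trainpath of $\tau'$ through $\beta$; whether this is possible depends on the tangency directions of $\beta$ at its two endpoints and on the connectivity of $\tau$'s trainpaths, and it requires a direction-compatibility argument that your phrase ``which exists precisely because $\nu$ is strictly positive'' does not supply.

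There are also problems with the move itself. If both endpoints of $\beta$ sit literally at cusps of $S$, hence at existing switches of $\tau$, those switches become four-valent rather than ``new trivalent switches''; this is fixable by combing, but as stated it is internally inconsistent. More seriously, a cusp-to-cusp arc in a once-punctured $k$-gon never produces a once-punctured monogon, because each endpoint of $\beta$ contributes a cusp to \emph{both} sub-regions, so the side containing the puncture retains at least two cusps; and in a once-punctured bigon such an arc produces a disallowed unpunctured bigon. Producing the once-punctured monogon requires a loop encircling the puncture, a different kind of move, which in turn leaves a cusped annulus that must be subdivided further. Penner--Harer avoid these pitfalls by writing down, for each type of complementary region, an explicit standard filling whose recurrence they verify directly, and then adjoining all of these fillings at once; if you prefer the one-branch-at-a-time scheme, you must enlarge the repertoire of moves (to include loops around punctures and the treatment of cusped annuli) and carry out the trainpath splicing argument in each case.
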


For a cone $V$, we denote by $\dim V$ the dimension of the vector field defined by the $\bR$-span of $V$.
We have $\dim V(\tau) = 6g-6+2h$ for any complete train track $\tau$ (see \cite[Corollary 1.1.3 and Lemma 2.1.1]{PH}).
Therefore, we can think that $V(\tau)$ is a cone of the euclidean space $\bR^{6g-6+2h}$ by taking a basis of the $\bR$-span of $V(\tau)$ in $\bR^{B(\tau)}$.

For a train track $\tau \subset \Sigma$, there is a \emph{fibered neighborhood} $\tau \subset N_\tau \subset \Sigma$ equipped with a retraction $r_\tau: N_\tau \searrow \tau$.
The subspace $N_\tau$ has cusps on its boundary and they coincide with the switches of $\tau$.
We think $N_\tau$ is foliated by the fibers of $r_\tau$ and we call the leaves \emph{ties}.
See \cref{fig:fibered nbd}.
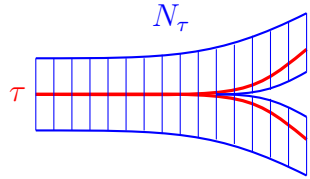
\begin{figure}[h]
    \centering
    \begin{tikzpicture}[scale=1.2]
    \draw[very thick, red] (-1.5,0) .. controls (0.8,0) and (0.8,-0.08) .. (1.5,0.5) node (v1) {};
    \draw[very thick, red] (-1.5,0) .. controls (0.8,0) and (0.8,0.08) .. (1.5,-0.5);
    \draw [blue, thick](-1.5,0.4) .. controls (0,0.4) and (0.5,0.4) .. (1.5,0.9);
    \draw [blue, thick](-1.5,-0.4) .. controls (0,-0.4) and (0.5,-0.4) .. (1.5,-0.9);
    \draw [blue, thick](0.5,0) .. controls (0.95,0) and (1.15,0.05) .. (1.5,0.25);
    \draw [blue, thick](0.5,0) .. controls (0.95,0) and (1.15,-0.05) .. (1.5,-0.25);
    \draw [blue](-1.5,0.4) -- (-1.5,-0.4);
    \draw [blue](-1.3,0.4) -- (-1.3,-0.4);
    \draw [blue](-1.1,0.4) -- (-1.1,-0.4);
    \draw [blue](-0.9,0.4) -- (-0.9,-0.4);
    \draw [blue](0.5,0.5) -- (0.5,-0.5);
    \draw [blue] (0.7,0.55) -- (0.7,0);
    \draw [blue] (0.7,0) -- (0.7,-0.55);
    \draw [blue](1.5,0.25) -- (1.5,0.9);
    \draw [blue](1.5,-0.25) -- (1.5,-0.9);
    \draw [blue](-0.7,0.4) -- (-0.7,-0.4);
    \draw [blue](-0.5,-0.4) -- (-0.5,0.4);
    \draw [blue](-0.3,0.4) -- (-0.3,-0.4);
    \draw [blue](-0.1,-0.44) -- (-0.1,0.44);
    \draw [blue](0.1,0.45) -- (0.1,-0.45);
    \draw [blue](0.3,0.48) -- (0.3,-0.48);
    \draw [blue](0.9,0.65) -- (0.9,0.01);
    \draw [blue](0.9,-0.03) -- (0.9,-0.65);
    \draw [blue](1.1,0.7) -- (1.1,0.05);
    \draw [blue](1.1,-0.05) -- (1.1,-0.7) -- cycle;
    \draw [blue](1.3,0.8) -- (1.3,0.15);
    \draw [blue](1.3,-0.15) -- (1.3,-0.8);
    \node [blue] at (0,0.85) {$N_\tau$};
    \node [red] at (-1.7,0) {$\tau$};
    \end{tikzpicture}
    \caption{A fibered neighborhood of a train track around a vertex}
    \label{fig:fibered nbd}
\end{figure}

We say that $\fol \in \MF(\Sigma)$ is carried by $\tau$, we write $\fol \prec \tau$, if we can deform a representative $(F, \mu)$ of $\cF$ as follows:
cutting and opening along a singular leaf $F$ so that the support of it is contained $N_\tau$ and each leaf of $F$ is transverse to the ties of $N_\tau$.
We denote by $\cV(\tau) \subset \MF(\Sigma)$ the subspace consisting of elements represented by partial measured foliations carried by $\tau$.

We define the map $\psi_\tau: V(\tau) \to \MF(\Sigma)$ as follows:
For each transverse measure $\nu \in V(\tau)$, let $\tau_\nu$ denote the subtrack of $\tau$ consisting of branches $b \in B(\tau)$ such that $\nu(b) >0$.
Let take a measured foliation $(F'_\nu, \mu'_\nu)$ on $N_{\tau_\nu}$ whose leaves are transverse to the ties and
\begin{align*}
    \int_{t_b} \mu'_\nu = \nu(b)
\end{align*}
for each $b \in B(\tau_\nu)$.
Here, $t_b$ is a tie of $N_{\tau_\nu}$ transverses to $b$.
Then, we define the measured foliation $(F_\nu, \mu_\nu)$ on $\Sigma$ by shrinking the complementary regions of $N_{\tau_\nu}$ and $\psi_\tau(\nu) := [F_\nu, \mu_\nu]$.

\begin{thm}[{\cite[Theorem 2.7.4]{PH}}]
The map $\psi_\tau: V(\tau) \to \MF(\Sigma)$ is an embedding with the image $\cV(\tau)$.
\end{thm}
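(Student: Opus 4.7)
The plan is to construct an explicit inverse $\phi_\tau: \cV(\tau) \to V(\tau)$ via tie-integrals and then verify bicontinuity. First I would argue well-definedness of $\psi_\tau$ on equivalence classes and that its image lies in $\cV(\tau)$: the latter is tautological because the foliation used to define $\psi_\tau(\nu)$ is built inside $N_{\tau_\nu}$ transverse to the ties, and the former holds because any two measured foliations on $N_{\tau_\nu}$ realizing the prescribed tie-weights $\nu(b)$ differ by a tie-preserving isotopy in each rectangle of $N_{\tau_\nu}$, yielding equivalent classes in $\MF(\Sigma)$ after collapsing complementary regions.

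Next I would construct the candidate inverse $\phi_\tau$. Given $\fol \in \cV(\tau)$, pick a representative $(F,\mu)$ supported in $N_\tau$ with leaves transverse to the ties, and for each branch $b \in B(\tau)$ set
\begin{align*}
\phi_\tau(\fol)(b) := \int_{t_b} \mu,
\end{align*}
where $t_b$ is any tie crossing the interior of $b$. The transverse-measure invariance of $\mu$ makes this independent of the choice of $t_b$. The switch condition \eqref{eq:switch_cond} holds because, near a switch as in \cref{fig:switch_cond}, a tie over $b_0$ is isotopic through transverse arcs (with endpoints remaining on fixed leaves) to the union of a tie over $b_1$ and a tie over $b_2$. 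Independence of the chosen representative follows because isotopies of $(F,\mu)$ and Whitehead collapses inside the complementary regions of $N_\tau$ preserve all tie-integrals.

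The identity $\phi_\tau \circ \psi_\tau = \mathrm{id}_{V(\tau)}$ is immediate from the construction of $\psi_\tau$. For $\psi_\tau \circ \phi_\tau = \mathrm{id}_{\cV(\tau)}$, both $\fol$ and $\psi_\tau(\phi_\tau(\fol))$ admit representatives supported in $N_{\tau_{\phi_\tau(\fol)}}$, transverse to the ties, with matching weights on every branch, hence are related by a rectangle-by-rectangle isotopy together with Whitehead moves inside the complementary regions. Continuity of $\psi_\tau$ follows because for any $[c] \in \cS(\Sigma)$, a tight representative of $c$ can be pulled transverse to the ties of $N_\tau$, so $I^{\mathrm{MF}}_{\psi_\tau(\nu)}([c])$ becomes a $\bZ_{\geq 0}$-linear function of the entries of $\nu$; pulling back the weak topology on $\bR_{\geq 0}^{\cS(\Sigma)}$ along $I^{\mathrm{MF}}_*$ then yields continuity of $\psi_\tau$. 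Continuity of $\phi_\tau$ is analogous: each branch weight $\nu_\fol(b)$ is expressible through intersection numbers with specific test curves carried by $\tau$.

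The main obstacle is the verification of $\psi_\tau \circ \phi_\tau = \mathrm{id}_{\cV(\tau)}$: one must argue that two tie-transverse measured foliations inside $N_\tau$ with equal branch weights are actually equivalent in $\MF(\Sigma)$. The hinge is the structural rigidity of foliations on a rectangle with prescribed boundary transverse measures, combined with the hypothesis $\chi(D(S)) < 0$ on the complementary regions $S$, which forces any residual ambiguity to be resolved by Whitehead collapses to a spine rather than creating disk or annulus components.
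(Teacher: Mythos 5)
The paper does not supply its own proof of this statement; it cites \cite[Theorem~2.7.4]{PH} verbatim, so there is no internal proof to compare against. Your outline is, however, the right skeleton of the standard argument found in Penner--Harer: define the inverse $\phi_\tau$ via tie-integrals, check the switch condition, show the two composites are identities, and then verify bicontinuity via the intersection-number topology on $\MF(\Sigma)$.

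The genuine gap is in the well-definedness of $\phi_\tau$ on equivalence classes (equivalently, the injectivity of $\psi_\tau$), which is the real content of the theorem. You claim that \emph{``isotopies of $(F,\mu)$ and Whitehead collapses inside the complementary regions of $N_\tau$ preserve all tie-integrals,''} but the equivalence relation defining $\MF(\Sigma)$ admits arbitrary isotopies and Whitehead collapses, not just those supported inside or near $N_\tau$. Two representatives of the same class could a priori be carried by $\tau$ via very different homotopies, passing through stages not supported in $N_\tau$, and nothing in your argument prevents the two carryings from having different branch weights. The standard resolution (this is where the hard work in \cite{PH} lives) is to show that the branch weights are recoverable from the intersection numbers $I^{\mathrm{MF}}_\fol([c])$ for a suitable family of simple closed curves hitting $\tau$ efficiently --- intersection numbers \emph{are} invariants of the equivalence class, so this pins down the weights. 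Your parenthetical ``test curves carried by $\tau$'' points in the wrong direction: curves carried by $\tau$ have small intersection with foliations carried by $\tau$; what is needed are curves transverse to $\tau$, which in \cite{PH} is organized through tangential measures and dual tracks. Without this step, the formula $\phi_\tau(\fol)(b):=\int_{t_b}\mu$ is only a function of the chosen representative and chosen carrying, not of $\fol$, and both $\phi_\tau\circ\psi_\tau=\mathrm{id}$ and the continuity of $\phi_\tau$ rest on an unverified premise. Your rigidity argument for $\psi_\tau\circ\phi_\tau=\mathrm{id}$ (rectangle-by-rectangle isotopy for foliations in $N_{\tau_\nu}$ with matching tie-weights) is fine as stated; it is the other direction that is missing.
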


Recall that the space $\dMF(\Sigma)$ of decorated measured foliations is a trivial bundle of $\MF(\Sigma)$ with fibers $\bR^P$.
We denotes by $\widetilde{\cV}(\tau) \subset \dMF(\Sigma)$ the subset consisting of the decorated measured foliations $(\cF, \mathbf{c})$ such that $\cF \in \cV(\tau)$ and define
\begin{align*}
\widetilde{\psi}_\tau := \psi_\tau \times \mathrm{id}_{\bR^P} : \widetilde{V}(\tau) := V(\tau) \times \bR^P \to \dMF(\Sigma).
\end{align*}

\begin{rmk}
We referred the reader to \cite{PH} above but the corresponding statements are about ``birecurrent train tracks''.
A train track is birecurrent if it is recurrent and ``transversely recurrent''.
One can verify that the statements are true for recurrent train tracks.
Moreover, every complete train track $\tau$ which is suited to an ideal triangulation, which is defined in the next subsection and used mainly in this paper, is transversely recurrent (since the map $\rho: B(\tau) \to \bR$ defined by $\rho(b) = 1$ satisfies the conditions of the definition of tangential measure on $\tau$).
\end{rmk}

\subsection{Train tracks which are suited to an ideal triangulation}

In this subsection, we define the type of train tracks which are suited to an ideal triangulation.
These train tracks are a good match to the theory of cluster algebras.

\begin{defi}
A train track $\tau$ is suited to an ideal triangulation $\tri$ if
\begin{itemize}
    \item $\tau$ is transverse to $\tri$ and
    \item in each triangle of $\tri$, $\tau$ likes in the one of the configurations of \cref{fig:suit_train_track}.
\end{itemize}
For such a train track, we say that a branch is \emph{short} if it is contained in a single triangle.
Namely, the rightmost subtrack of \cref{fig:suit_train_track} has three short branches.
Also, we call the other branches \emph{long} branches.
If the train track $\tau$ which is suited to an ideal triangulation $\tri$ likes in the configuration of the left, center or right of \cref{fig:suit_train_track} in a triangle $t$ of $\tri$, then we say that $\tau$ is of type I, II or III on $t$, respectively.

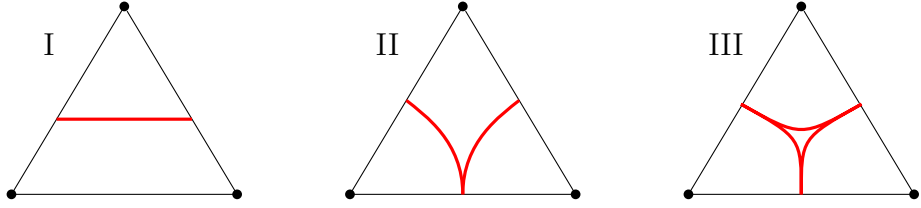
\begin{figure}[h]
    \centering
    \begin{tikzpicture}
    \draw (-0.5,4) node [fill, circle, inner sep=1.3] (v1) {} -- (-2,1.5) node [fill, circle, inner sep=1.3] {} -- (1,1.5) node [fill, circle, inner sep=1.3] {} -- (v1);
    \draw (4,4) node [fill, circle, inner sep=1.3] (v1) {} -- (2.5,1.5) node [fill, circle, inner sep=1.3] {} -- (5.5,1.5) node [fill, circle, inner sep=1.3] {} -- (v1);
    \draw (8.5,4) node [fill, circle, inner sep=1.3] (v1) {} -- (7,1.5) node [fill, circle, inner sep=1.3] {} -- (10,1.5) node [fill, circle, inner sep=1.3] {} -- (v1);
    \draw [red, very thick](-1.4,2.5) -- (0.4,2.5);
    \draw [red, very thick](4,1.5) .. controls (4,2.2) and (3.5,2.55) .. (3.25,2.75);
    \draw [red, very thick](4,1.5) .. controls (4,2.2) and (4.5,2.55) .. (4.75,2.75);
    \draw [red, very thick](7.7,2.7) .. controls (8.5,2.25) and (8.5,2.25) .. (8.5,1.5) .. controls (8.5,2.25) and (8.5,2.25) .. (9.3,2.7) .. controls (8.5,2.25) and (8.5,2.25) .. (7.7,2.7);
    \node at (-1.5,3.5) {I};
    \node at (3,3.5) {II};
    \node at (7.5,3.5) {III};
    \end{tikzpicture}
    \caption{The local models of a train track which suits to an ideal triangulation.}
    \label{fig:suit_train_track}
\end{figure}
\end{defi}

We denote by $\TT_\tri$ the collection of the triangles suited to an ideal triangle $\tri$ and let $\TT^{\max}_\tri \subset \TT_\tri$ denote the subset consisting of the complete train tracks.

\begin{ex}
Let $\Sigma$ be a sphere with four punctures.
We take an ideal triangulation $\tri$ as shown in \cref{fig:tt_fan_4shp}.
Then, $\# \TT_\tri = 8$ and $\# \TT_\tri^{\max} = 4$.

\begin{figure}[h]
    \centering
    \begin{tikzpicture}
    \node [fill, circle, inner sep=1.3] (v1) at (0,-3.5) {};
    \node [fill, circle, inner sep=1.3] (v3) at (0,-6.5) {};
    \node [fill, circle, inner sep=1.3] (v2) at (-1.5,-5) {};
    \node [fill, circle, inner sep=1.3] (v4) at (1.5,-5) {};
    \draw [blue](v1) -- (v2) -- (v3) -- (v4) -- (v1) -- (v3);
    \draw [blue](0,-6.5) .. controls (-1,-7) and (-2.45,-6.5) .. (-2.45,-5) .. controls (-2.45,-3.5) and (-1,-3) .. (0,-3.5);
    \node [blue] at (0.2,-5.4) {\scriptsize 1};
    \node [blue] at (-0.8,-4.05) {\scriptsize 2};
    \node [blue] at (-1,-5.75) {\scriptsize 3};
    \node [blue] at (0.8,-5.95) {\scriptsize 4};
    \node [blue] at (0.85,-4.1) {\scriptsize 5};
    \node [blue] at (-2.65,-5) {\scriptsize 6};
    \draw [red, very thick](0,-5) .. controls (-0.5,-4.9) and (-0.6,-4.6) .. (-0.85,-4.4) .. controls (-1.35,-3.9) and (-2.45,-4.95) .. (-1.45,-5.95) .. controls (0.15,-7.55) and (1.4,-6.1) .. (0.9,-5.6) .. controls (0.65,-5.35) and (0.5,-5.1) .. (0,-5);
    \draw [red, very thick](0,-5) .. controls (-0.5,-4.9) and (-0.95,-6.45) .. (-1.45,-5.95);
    \draw [red, very thick](0,-5);
    \draw [red, very thick](0,-5) .. controls (0.5,-5.1) and (0.45,-4.7) .. (0.95,-4.45) .. controls (1.45,-4.2) and (2,-4.5) .. (2,-5) .. controls (2,-5.5) and (1.4,-6.1) .. (0.9,-5.6);

    \node [fill, circle, inner sep=1.3] (v1) at (6.5,2.8) {};
    \node [fill, circle, inner sep=1.3] (v3) at (6.5,-0.2) {};
    \node [fill, circle, inner sep=1.3] (v2) at (5,1.3) {};
    \node [fill, circle, inner sep=1.3] (v4) at (8,1.3) {};
    \draw [blue](v1) -- (v2) -- (v3) -- (v4) -- (v1) -- (v3);
    \draw [blue](6.5,-0.2) .. controls (5.5,-0.7) and (4,-0.2) .. (4,1.3) .. controls (4,2.95) and (5.85,3.3) .. (6.5,2.8);
    \node [blue] at (6.75,1.3) {\scriptsize 1};
    \node [blue] at (5.5,2.05) {\scriptsize 2};
    \node [blue] at (5.5,0.55) {\scriptsize 3};
    \node [blue] at (7.5,0.55) {\scriptsize 4};
    \node [blue] at (7.5,2.05) {\scriptsize 5};
    \node [blue] at (3.95,0.55) {\scriptsize 6};
    \begin{scope}[xshift=-30, yshift=-190, xscale=-1, shift={(-0.6,6.3)}]
    \draw [red, very thick](-0.5,1.5) .. controls (-1,1.6) and (-1.1,1.9) .. (-1.35,2.1) .. controls (-1.85,2.6) and (-2.95,1.55) .. (-1.95,0.55) .. controls (-0.35,-1.05) and (0.9,0.4) .. (0.4,0.9) .. controls (0.15,1.15) and (0,1.4) .. (-0.5,1.5);
    \draw [red, very thick](-0.5,1.5) .. controls (-1,1.6) and (-1.45,0.05) .. (-1.95,0.55);
    \draw [red, very thick](-0.5,1.5);
    \draw [red, very thick](-0.5,1.5) .. controls (0,1.4) and (-0.05,1.8) .. (0.45,2.05) .. controls (0.95,2.3) and (1.5,2) .. (1.5,1.5) .. controls (1.5,1) and (0.9,0.4) .. (0.4,0.9);
    \end{scope}

    \node [fill, circle, inner sep=1.3] (v1) at (0.1,2.8) {};
    \node [fill, circle, inner sep=1.3] (v3) at (0.1,-0.2) {};
    \node [fill, circle, inner sep=1.3] (v2) at (-1.4,1.3) {};
    \node [fill, circle, inner sep=1.3] (v4) at (1.6,1.3) {};
    \draw [blue](v1) -- (v2) -- (v3) -- (v4) -- (v1) -- (v3);
    \draw [blue](0.1,-0.2) .. controls (-0.9,-0.7) and (-2.25,-0.2) .. (-2.25,1.3) .. controls (-2.15,2.7) and (-0.9,3.3) .. (0.1,2.8);
    \node [blue] at (0.35,1.3) {\scriptsize 1};
    \node [blue] at (-0.9,2.05) {\scriptsize 2};
    \node [blue] at (-0.9,0.55) {\scriptsize 3};
    \node [blue] at (1.1,0.55) {\scriptsize 4};
    \node [blue] at (1.1,2.05) {\scriptsize 5};
    \node [blue] at (-2.45,1.15) {\scriptsize 6};
    \begin{scope}[xshift=400, yshift=-190, xscale=-1, shift={(0.5,6.3)}]
    \draw [red, very thick](7,1.5) .. controls (6.6,1.65) and (6.6,1.8) .. (6.2,2.2) .. controls (5.7,2.7) and (4.5,1.75) .. (5.5,0.5) .. controls (6.5,-0.75) and (8.3,-0.4) .. (7.95,0.5) .. controls (7.75,1.15) and (7.4,1.35) .. (7,1.5);
    \draw [red, very thick](6.2,2.2) .. controls (6.6,1.8) and (5.65,0.35) .. (6.25,-0.1);
    \draw [red, very thick](7.8,0.85) .. controls (7.5,2.3) and (9.05,2.55) .. (9.05,1.5) .. controls (9.05,0.85) and (8.5,0.5) .. (7.8,-0.1);
    \end{scope}

    \node [fill, circle, inner sep=1.3] (v1) at (6.5,-3.5) {};
    \node [fill, circle, inner sep=1.3] (v3) at (6.5,-6.5) {};
    \node [fill, circle, inner sep=1.3] (v2) at (5,-5) {};
    \node [fill, circle, inner sep=1.3] (v4) at (8,-5) {};
    \draw [blue](v1) -- (v2) -- (v3) -- (v4) -- (v1) -- (v3);
    \draw [blue](6.5,-6.5) .. controls (5.5,-7) and (4.05,-6.25) .. (4.05,-5) .. controls (4.05,-3.5) and (5.5,-3) .. (6.5,-3.5);
    \node [blue] at (6.7,-4.85) {\scriptsize 1};
    \node [blue] at (5.75,-4) {\scriptsize 2};
    \node [blue] at (5.4,-5.65) {\scriptsize 3};
    \node [blue] at (7.55,-5.7) {\scriptsize 4};
    \node [blue] at (7.5,-4.25) {\scriptsize 5};
    \node [blue] at (3.85,-5) {\scriptsize 6};
    \draw [red, very thick](6.5,-5) .. controls (6.1,-4.85) and (6.1,-4.7) .. (5.7,-4.3) .. controls (5.2,-3.8) and (4,-4.75) .. (5,-6) .. controls (6,-7.25) and (7.8,-6.9) .. (7.45,-6) .. controls (7.25,-5.35) and (6.9,-5.15) .. (6.5,-5);
    \draw [red, very thick](5.7,-4.3) .. controls (6.1,-4.7) and (5.15,-6.15) .. (5.75,-6.6);
    \draw [red, very thick](7.3,-5.65) .. controls (7,-4.2) and (8.55,-3.95) .. (8.55,-5) .. controls (8.55,-5.65) and (8,-6) .. (7.3,-6.6);
    
    \draw (0,-2) node [blue] {} -- (6.5,-2) node [blue] {};
    \draw (3,-5.5) node [blue] {} -- (3,1.5) node [blue] {};
    
    \begin{scope}[yshift=-78, xshift=-27, scale=.5]
    \node [fill, circle, inner sep=1] (v1) at (-0.5,3) {};
    \node [fill, circle, inner sep=1] (v3) at (-0.5,0) {};
    \node [fill, circle, inner sep=1] (v2) at (-2,1.5) {};
    \node [fill, circle, inner sep=1] (v4) at (1,1.5) {};
    \draw [blue](v1) -- (v2) -- (v3) -- (v4) -- (v1) -- (v3);
    \draw [blue](-0.5,0) .. controls (-1.5,-0.5) and (-3,0) .. (-3,1.5) .. controls (-3,3) and (-1.5,3.5) .. (-0.5,3);
    \draw [red, very thick, yshift=100](-0.5,-2) .. controls (-1,-2) and (-1.4,-1.3) .. (-1.9,-1.3) .. controls (-2.75,-1.3) and (-2.75,-2.7) .. (-1.9,-2.7) .. controls (-1.4,-2.7) and (-1,-2) .. (-0.5,-2) .. controls (0,-2) and (0.45,-2.7) .. (0.95,-2.7) .. controls (1.7,-2.7) and (1.7,-1.3) .. (0.95,-1.3) .. controls (0.45,-1.3) and (0,-2) .. (-0.5,-2);
    \end{scope}
    
    \begin{scope}[yshift=-78, xshift=233, scale=0.5, shift={(0,0)}]
    \node [fill, circle, inner sep=1] (v1) at (-0.5,3) {};
    \node [fill, circle, inner sep=1] (v3) at (-0.5,0) {};
    \node [fill, circle, inner sep=1] (v2) at (-2,1.5) {};
    \node [fill, circle, inner sep=1] (v4) at (1,1.5) {};
    \draw [blue](v1) -- (v2) -- (v3) -- (v4) -- (v1) -- (v3);
    \draw [blue](-0.5,0) .. controls (-1.5,-0.5) and (-3,0) .. (-3,1.5) .. controls (-3,3) and (-1.5,3.5) .. (-0.5,3);
    \draw [red, very thick, yshift=100](-1,-3.7) .. controls (-1.5,-3.4) and (-1.3,-1.5) .. (-1.9,-1.3) .. controls (-2.55,-1.2) and (-2.55,-2) .. (-2.1,-2.7) .. controls (-1.8,-3.2) and (-1.5,-3.4) .. (-1,-3.7) .. controls (0.1,-4.1) and (0.55,-3.3) .. (1.05,-2.7) .. controls (1.5,-2.1) and (1.6,-1.2) .. (0.95,-1.3) .. controls (0.35,-1.5) and (0.3,-4.3) .. (-1,-3.7);
    \end{scope}
    
    \begin{scope}[yshift=42, xshift=92, scale=0.5, shift={(0,-17.5)}]
    \node [fill, circle, inner sep=1] (v1) at (-0.5,3) {};
    \node [fill, circle, inner sep=1] (v3) at (-0.5,0) {};
    \node [fill, circle, inner sep=1] (v2) at (-2,1.5) {};
    \node [fill, circle, inner sep=1] (v4) at (1,1.5) {};
    \draw [blue](v1) -- (v2) -- (v3) -- (v4) -- (v1) -- (v3);
    \draw [blue](-0.5,0) .. controls (-1.5,-0.5) and (-3,0) .. (-3,1.5) .. controls (-3,3) and (-1.5,3.5) .. (-0.5,3);
    \draw [red, very thick, rotate=-45] (-1.5,-0.3586) ellipse (1.5 and 0.6);
    \end{scope}

    \begin{scope}[yshift=-194, xshift=92, scale=0.5, shift={(0,13)}]
    \node [fill, circle, inner sep=1] (v1) at (-0.5,7.5) {};
    \node [fill, circle, inner sep=1] (v3) at (-0.5,4.5) {};
    \node [fill, circle, inner sep=1] (v2) at (-2,6) {};
    \node [fill, circle, inner sep=1] (v4) at (1,6) {};
    \draw [blue](v1) -- (v2) -- (v3) -- (v4) -- (v1) -- (v3);
    \draw [blue](-0.5,4.5) .. controls (-1.5,4) and (-3,4.5) .. (-3,6) .. controls (-3,7.5) and (-1.5,8) .. (-0.5,7.5);
    \draw [red, very thick, rotate=45] (3.8235,3.5407) ellipse (1.5 and 0.6);
    \end{scope}
    \end{tikzpicture}  
    \caption{The cones $V(\tau)$ for $\tau \in \TT_\tri$.}
    \label{fig:tt_fan_4shp}
\end{figure}
\end{ex}

Let $D_\tri(\pot)$ denote the set of maximal domains of linearity of the map $\pot : \dMF(\Sigma) \to \bR^P$ in the coordinate $\mathbf{\sfa}^\tri$.

\begin{thm}\label{thm:TT_fan}
Let $\tri$ be an ideal triangulation of $\Sigma$.
Then, the map
\begin{align*}
    \widetilde{\cV}: \TT_\tri^{\max} \to D_\tri(\pot),\ \tau \mapsto \widetilde{\cV}(\tau)
\end{align*}
is well-defined bijection.
\end{thm}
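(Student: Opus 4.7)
The plan is to parametrize both $\TT_\tri^{\max}$ and $D_\tri(\pot)$ by a common combinatorial datum --- an assignment $\sigma$ sending each $p \in P$ to some triangle $\sigma(p) \in T_{\tri, p}$ --- and then check that $\widetilde{\cV}$ intertwines the two parametrizations.

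First, I unpack the formula \eqref{eq:pot_geom} using the shorthand $\ell_{p,t} := \sfa_{\alpha^1_{p,t}} + \sfa_{\alpha^2_{p,t}} - \sfa_{\alpha^0_{p,t}}$, so that $\pot_p = \min_{t \in T_{\tri, p}} \ell_{p, t}$. Since $\pot = (\pot_p)_{p \in P}$ is a tupling of minima of disjoint families of affine functions, every maximal domain of linearity of $\pot$ in the coordinates $\mathbf{\sfa}^\tri$ has the form
\[
C_\sigma := \{\sfa \in \bR^\tri : \ell_{p, \sigma(p)} \leq \ell_{p, t} \text{ for all } p \in P,\ t \in T_{\tri, p}\}
\]
for some such assignment $\sigma$, and $D_\tri(\pot)$ is the collection of those $C_\sigma$ that are full-dimensional.

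Next, to each $\tau \in \TT_\tri^{\max}$ I would attach $\sigma_\tau$ by declaring $\sigma_\tau(p)$ to be the unique ideal triangle containing the single cusp of the once-punctured monogon complementary region around $p$ (whose existence and uniqueness follow from completeness by \cref{thm:subtrack}). A local inspection of \cref{fig:suit_train_track} shows that the number of corners of a triangle $t$ pointed to by $\sigma_\tau$ equals $0$, $1$, or $3$ exactly when $\tau$ has type I, II, or III at $t$. This observation furnishes both well-definedness of $\sigma_\tau$ and the reverse construction $\sigma \mapsto \tau_\sigma$: install the type dictated by $|\sigma^{-1}(t)| \in \{0,1,3\}$ in each triangle --- with cusps at the designated corners for types II and III --- and resolve the remaining freedom in type I by requiring the branches to glue coherently at each edge of $\tri$.

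It remains to check $\widetilde{\cV}(\tau) = C_{\sigma_\tau}$. For $\widetilde{\cV}(\tau) \subseteq C_{\sigma_\tau}$, take $(\cF, \mathbf{c}) \in \widetilde{\cV}(\tau)$ and pass to its canonical model $(\widetilde{F}_\tri, \widetilde{\mu}_\tri)$; the carrying condition forces the singular leaf toward $p$, if any, to live in $\sigma_\tau(p)$, and the identity $\ell_{p, t} = \int_{\delta_{p, t}} \widetilde{\mu}_\tri$ yields $\ell_{p, \sigma_\tau(p)} = c_p = \pot_p \leq \ell_{p, t}$ for all $t \in T_{\tri, p}$. Conversely, given $\sfa \in C_{\sigma_\tau}$, the reconstruction of \cref{prop:A-coord} produces a decorated foliation whose singular leaves are forced by the defining inequalities to realize exactly the cusps of $\tau$, so it is carried by $\tau$. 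The dimension count $\dim \widetilde{V}(\tau) = (6g - 6 + 2h) + h = 6g - 6 + 3h = \dim \bR^\tri$ then confirms that both sides are full-dimensional closed cones and hence equal.

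\textbf{The main obstacle} is the combinatorial consistency of the inverse construction $\sigma \mapsto \tau_\sigma$: each type I triangle has three a priori choices of which edge to miss, and one must show that compatibility across the shared edges with the already-determined adjacent configurations pins down a unique, globally coherent choice, yielding a trivalent graph satisfying the switch condition with only once-punctured monogon and unpunctured trigon complementary regions. In parallel, one must rule out assignments with $|\sigma^{-1}(t)| = 2$ for some $t$ (for which no local model in \cref{fig:suit_train_track} provides exactly two corner-cusps), showing that the corresponding $C_\sigma$ is not full-dimensional and hence lies outside $D_\tri(\pot)$.
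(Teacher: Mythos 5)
Your high-level plan — parametrize both $\TT_\tri^{\max}$ and $D_\tri(\pot)$ by an assignment $\sigma: P \to \tri$, $p \mapsto t_p \in T_{\tri,p}$ — is exactly the paper's strategy, and your definition of $\sigma_\tau(p)$ via the cusp of the once-punctured monogon around $p$ agrees with the paper's $t_p$ (the unique triangle at which $\tau$ is of type II near $p$). However, the local correspondence you claim is factually wrong. A local inspection shows that the number of cusps of $\tau$ \emph{inside} $t$ is $0,1,3$ for types I, II, III; but only the type-II cusp belongs to a once-punctured monogon, while the three type-III cusps belong to the small unpunctured trigon. Hence for a complete $\tau$ one has $|\sigma_\tau^{-1}(t)| = 1$ exactly when $\tau$ is type II at $t$, and $|\sigma_\tau^{-1}(t)| = 0$ exactly when $\tau$ is type III at $t$; the value $3$ never occurs (it would require removing all three short branches in $t$, which is not among the local models), and type I never occurs for complete tracks at all, since it forces a complementary region with two punctures.

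This error propagates into your inverse construction: for $|\sigma^{-1}(t)| = 0$ you install type I, whereas the correct thing is type III. The paper's construction avoids the whole issue by starting from the \emph{freeway} $\tau_\tri$, which is of type III in every triangle, and then pruning: for each $p$ delete the single short branch in $t_p = \sigma(p)$ across from $p$, turning that triangle into type II and leaving all others type III. There is then no type-I triangle and no gluing freedom to resolve, so the "main obstacle" you identify — consistency of type-I choices across edges — dissolves entirely. Your secondary concern, ruling out assignments with $|\sigma^{-1}(t)| = 2$, is a legitimate point that the paper also does not give a separate argument for, but it is not addressed by your $\{0,1,3\}$ trichotomy; it needs its own check that such a choice either fails to give a full-dimensional $C_\sigma$ or fails to yield a train track with only monogon/trigon complementary regions.
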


\begin{proof}
First, we see that the well-definedness of the map $\widetilde{\cV}$.
Let us take $\tau \in \TT^{\max}_\tri$.
Then, $\tau$ is of type II or III since $\tau$ is complete.
That is, if there is a triangle on which $\tau$ is of type I, then the complementary region which contains the side to which $\tau$ is not transverse has two punctures.
Moreover, for each puncture $p \in P$, there is a unique triangle $t_p$ of $\tri$ on which $\tau$ is of type II, and $\tau$ is of type III on the other triangles of $\tri$.

We note that the absence of the short branch across from $p$ in $t_p$ corresponds to the equality $\pot_p = 0$ in $\dMF(\Sigma)$.
Let $\pot_{t_p} := \sfa_{\alpha^1_p} + \sfa_{\alpha^2_p} - \sfa_{\alpha^0_p}$ where $\alpha^i_p$ are the edges of $t_p$ as \cref{fig:t_p}.
Then, we can write the cone $\cV(\tau)$ as
\begin{align*}
    \cV(\tau) = \{ \cF \mid \pot_p(\cF) = \pot_{t_p}(\cF) = 0,\ p \in P \} \subset \MF(\Sigma)
\end{align*}
since $\pot^{-1}(0) \cong \MF(\Sigma)$.
Therefore, we have
\begin{align*}
    \widetilde{\cV}(\tau) = \{ \widetilde{\cF} \mid \pot_p(\widetilde{\cF}) = \pot_{t_p}(\widetilde{\cF}),\ p \in P \} \subset \dMF(\Sigma).
\end{align*}
Namely, the map $\pot_p$ is the linear map $\pot_{t_p}$ on $\widetilde{\cV}(\tau)$.
If $\widetilde{\cV}(\tau)$ is not maximal among the domains of linearity of $\pot$, then it is a face of the other domain but this cannot be happen since dimension of $\widetilde{\cV}(\tau)$ is full by the completeness of $\tau$.
Therefore, $\widetilde{\cV}(\tau) \in D_\tri(\pot)$.
\begin{figure}[h]
    \centering
    \begin{tikzpicture}
    \draw (4,4) node [fill, circle, inner sep=1.5, red] (v1) {} -- (2.5,1.5) node [fill, circle, inner sep=1.5] {} -- (5.5,1.5) node [fill, circle, inner sep=1.5] {} -- (v1);
    \node[red] at (4.5,4) {$p$};
    \draw [red, very thick](4,1.5) .. controls (4,2.2) and (3.5,2.55) .. (3.25,2.75);
    \draw [red, very thick](4,1.5) .. controls (4,2.2) and (4.5,2.55) .. (4.75,2.75);
    \node at (2.5,2.75) {$\alpha^2_{p}$};
    \node at (5.35,2.75) {$\alpha^1_{p}$};
    \node at (4,1.1) {$\alpha^0_{p}$};
    \end{tikzpicture}
    \vspace{-5mm}
    \caption{$t_p$.}
    \label{fig:t_p}
\end{figure}

Conversely, we take $\widetilde{\cK} \in D_\tri(\pot)$.
From the definition of $D_\tri(\pot)$, for any $\widetilde{\cF} \in \widetilde{\cK}$ and $p \in P$, there is a triangle $t_p \in T_{\tri, P}$ such that $\pot_p(\widetilde{\cF}) = \pot_{t_p}(\widetilde{\cF})$.
Also, let $\cK := \widetilde{\cK} \cap \MF(\Sigma)$, then $\widetilde{\cK} = \cK \times \bR^P$ since the form $\pot_p = \pot_{t_p}$ is invariant under the action $\bR^P$ on $\MF(\Sigma)$ for any $p \in P$.
Next, we define the train track $\tau_{\bft}$ from the tuple of triangles $\bft = (t_p)_p$.
Let $\tau_\tri$ be a freeway of an ideal triangulation $\tri$ of $\Sigma$.
Namely, $\tau_\tri$ is a graph obtained from a dual fat graph of $\tri$ by replacing the neighborhood of each trivalent vertex to the small cusped triangle like in \cref{fig:freeway}.
A freeway has an once punctured null-gon as a complementary region of $\tau_\tri$ for each puncture $p \in P$, hence it is not a train track.
Although, the graph $\tau_{\bft}$, obtained by cutting off the short branch in each triangle $t_p$ which is across from the puncture $p$ for each $p \in P$, is a complete train track since the complementary regions of it are consisting of only once punctured monogons or unpunctured trigons.
In particular, the train track $\tau_{\bft}$ is suited to $\tri$ by the construction.
It is clear that $\pot_p = \pot_{t_p}$ on $\widetilde{\cV}(\tau_{\bft})$, thus $\widetilde{K} = \widetilde{\cV}(\tau_{\bft})$.
\begin{figure}[h]
    \centering
    \begin{tikzpicture}[scale=0.91]
    \node [fill, circle, inner sep=1.5pt] (v1) at (-7,1) {};
    \node [fill, circle, inner sep=1.5pt] (v5) at (-6,-1.5) {};
    \node [fill, circle, inner sep=1.5pt] (v4) at (-8.5,-0.5) {};
    \node [fill, circle, inner sep=1.5pt] (v6) at (-4.5,0.5) {};
    \node (v2) at (-7.5,2.5) {};
    \node (v3) at (-6,2.5) {};
    \node (v9) at (-10,-0.5) {};
    \node (v8) at (-6,-3) {};
    \node (v7) at (-3,0) {};
    \draw  (v1) edge (v2);
    \draw  (v1) edge (v3);
    \draw  (v1) edge (v4);
    \draw  (v4) edge (v5);
    \draw  (v5) edge (v1);
    \draw  (v1) edge (v6);
    \draw  (v6) edge (v5);
    \draw  (v6) edge (v7);
    \draw  (v5) edge (v8);
    \draw  (v4) edge (v9);
    \coordinate (v11) at (-7.1,-0.4) {} {} {};
    \coordinate (v13) at (-5.8,-0.1) {} {} {};
    \node (v14) at (-5.5,1.5) {};
    \node (v15) at (-4.5,-1) {};
    \node (v12) at (-8,-2) {};
    \node (v10) at (-8.5,1) {};
    \draw [very thick, red] (v10) edge (v11);
    \draw [very thick, red] (v11) edge (v12);
    \draw [very thick, red] (v11) edge (v13);
    \draw [very thick, red] (v13) edge (v14);
    \draw [very thick, red] (v13) edge (v15);
    
    \draw[ultra thick,-{Classical TikZ Rightarrow[length=4pt]},decorate,decoration={snake,amplitude=1.5pt,pre length=2pt,post length=3pt}] (-2.5, 0) to (-1, 0);
    
    \node [fill, circle, inner sep=1.5pt] (v1) at (2.5,1) {};
    \node [fill, circle, inner sep=1.5pt] (v5) at (3.5,-1.5) {};
    \node [fill, circle, inner sep=1.5pt] (v4) at (1,-0.5) {};
    \node [fill, circle, inner sep=1.5pt] (v6) at (5,0.5) {};
    \node (v2) at (2,2.5) {};
    \node (v3) at (3.5,2.5) {};
    \node (v9) at (-0.5,-0.5) {};
    \node (v8) at (3.5,-3) {};
    \node (v7) at (6.5,0) {};
    \draw  (v1) edge (v2);
    \draw  (v1) edge (v3);
    \draw  (v1) edge (v4);
    \draw  (v4) edge (v5);
    \draw  (v5) edge (v1);
    \draw  (v1) edge (v6);
    \draw  (v6) edge (v5);
    \draw  (v6) edge (v7);
    \draw  (v5) edge (v8);
    \draw  (v4) edge (v9);
    \node (v14) at (3.8,1.5) {};
    \node (v15) at (5,-1) {};
    \node (v12) at (1.7,-2) {};
    \node (v10) at (1,1) {};
    \coordinate (a1) at (1.8,0.2) {} {} {};
    \coordinate (a2) at (2.8,-0.3) {};
    \coordinate (a3) at (2.1,-0.9) {};
    \coordinate (b1) at (3.7,0.6) {} {} {};
    \coordinate (b2) at (4.2,-0.4) {};
    \coordinate (b3) at (3.2,-0.2) {};
    \draw [very thick, red] (v10) edge (a1);
    \draw [very thick, red] (a3) edge (v12);
    \draw [very thick, red] (a2) edge (b3);
    \draw [very thick, red] (b1) edge (v14);
    \draw [very thick, red] (b2) edge (v15);
    \coordinate (a) at (2.26, -0.33);
    \coordinate (b) at (3.7, 0);
    \draw [very thick, red] (a1) .. controls (a) and (a) .. (a2);
    \draw [very thick, red] (a2) .. controls (a) and (a) .. (a3);
    \draw [very thick, red] (a3) .. controls (a) and (a) .. (a1);
    \draw [very thick, red] (b1) .. controls (b) and (b) .. (b2);
    \draw [very thick, red] (b2) .. controls (b) and (b) .. (b3);
    \draw [very thick, red] (b3) .. controls (b) and (b) .. (b1);
    \end{tikzpicture}    
    \caption{The deformation from the dual fat graph to the freeway.}
    \label{fig:freeway}
\end{figure}
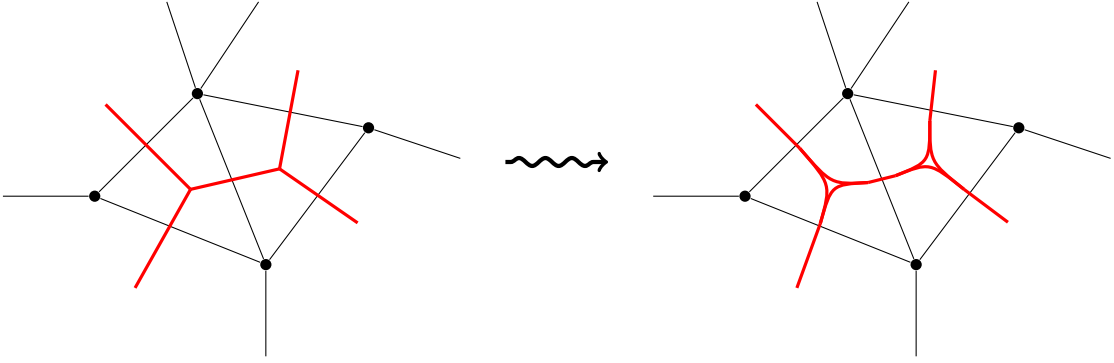
\end{proof}

\begin{ex}\label{ex:5-shp}
Here, we give an example of the construction of the train track $\tau_{\bft}$ from the tuple of triangles $\bft$ corresponding to an element in $D_\tri(\pot)$ in the proof of \cref{thm:TT_fan}.
Let $\Sigma$ be a sphere with five punctures (\emph{i.e.}, $g=0$, $h=5$) and give a triangulation $\tri$ like in the left of \cref{fig:5-sph}.
Also, we label the punctures and the triangles as on the left of \cref{fig:5-sph}.
Let us take a tuple $\bft = (t_{p_i} \mid i =1, \dots, 5) =  (t_5, t_2, t_3, t_4, t_6)$ of triangles of $\tri$.
One can verify the existence of the element of $D_\tri(\pot)$ corresponding to $\bft$.
The right of \cref{fig:5-sph} shows the train track $\tau_{\bft}$ obtained from the freeway $\tau_\tri$.
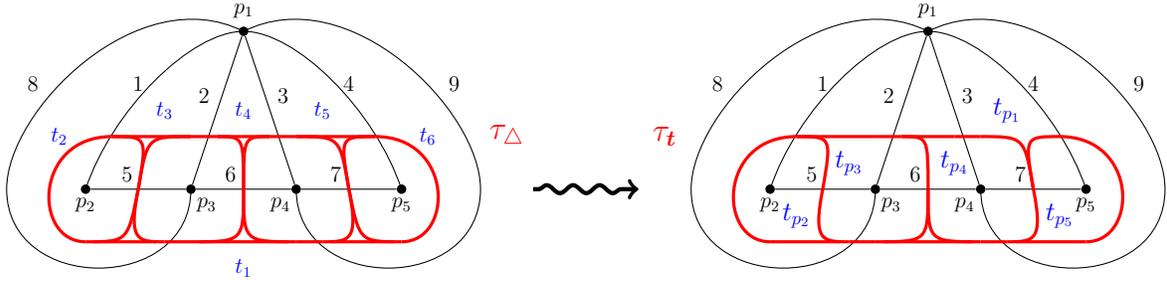
\begin{figure}[h]
    \centering
    \begin{tikzpicture}[auto, scale=0.7, every node/.style={scale=0.7}]
    \node[fill, circle, inner sep=1.8pt] (v2) at (-7.5,0) {};
    \node[fill, circle, inner sep=1.8pt] (v3) at (-5.5,0) {};
    \node[fill, circle, inner sep=1.8pt] (v1) at (-9.5,0) {};
    \node[fill, circle, inner sep=1.8pt] (v4) at (-3.5,0) {};
    \node[fill, circle, inner sep=1.8pt] (v5) at (-6.5,3) {};
    \draw  (v1) to node [xshift=-6pt] {5}  (v2);
    \draw  (v2) to node [xshift=-7pt] {6} (v3);
    \draw  (v3) to node [xshift=-7pt] {7} (v4);
    \draw  (v5) to node [swap] {2} (v2);
    \draw  (v5) to node {3} (v3);
    \node at (-8.5,2) {1};
    \node at (-4.5,2) {4};
    \node at (-10.5,2) {8};
    \node at (-2.5,2) {9};
    \draw (-6.5,3) .. controls (-7.5,3) and (-9,1.5) .. (-9.5,0);
    \draw (-6.5,3) .. controls (-5.5,3) and (-4,1.5) .. (-3.5,0);
    \draw (-7.5,0) .. controls (-7.5,-2) and (-11,-2) .. (-11,0) .. controls (-11,1.5) and (-8.5,4) .. (-6.5,3);
    \draw (-5.5,0) .. controls (-5.5,-2) and (-2,-2) .. (-2,0) .. controls (-2,1.5) and (-4.5,4) .. (-6.5,3);
    \coordinate (w1) at (-9,1) {} {};
    \coordinate (v6) at (-7.5,1) {} {} {};
    \coordinate (w2) at (-8.5,0) {} {};
    \coordinate (v7) at (-7,1) {} {} {};
    \coordinate (v8) at (-6,1) {} {} {};
    \coordinate (w3) at (-6.5,0) {} {};
    \coordinate (v9) at (-5.5,1) {} {} {};
    \coordinate (w4) at (-4,1) {} {};
    \coordinate (w5) at (-4.5,0) {} {};
    \coordinate (v11) at (-7.5,-1) {} {} {};
    \coordinate (v12) at (-5.5,-1) {} {};
    \coordinate (v10) at (-8,-1) {} {} {};
    \coordinate (w6) at (-9.5,-1) {} {};
    \coordinate (v13) at (-5,-1) {} {};
    \coordinate (w7) at (-3.5,-1) {} {};
    \draw [very thick, red] (v6) edge (v7);
    \draw [very thick, red] (v8) edge (v9);
    \draw [very thick, red] (v10) edge (v11);
    \draw [very thick, red] (v12) edge (v13);
    \draw [very thick, red](-9,1) .. controls (-10.5,1) and (-10.5,-1) .. (-9.5,-1);
    \draw [very thick, red](-4,1) .. controls (-2.5,1) and (-2.5,-1) .. (-3.5,-1);
    \coordinate (c) at (-8.33,1) {} {};
    \draw[very thick, red] (-9,1) .. controls (c) and (c) .. (-7.5,1) ;
    \draw[very thick, red] (-7.5,1) .. controls (c) and (c) .. (-8.5,0) ;
    \draw[very thick, red] (-8.5,0) .. controls (c) and (c) .. (-9,1) ;
    \coordinate (c) at (-6.5,1) {} {};
    \draw[very thick, red] (-7,1) .. controls (c) and (c) .. (-6,1) ;
    \draw[very thick, red] (-6,1) .. controls (c) and (c) .. (-6.5,0) ;
    \draw[very thick, red] (-6.5,0) .. controls (c) and (c) .. (-7,1) ;
    \coordinate (c) at (-4.67,1) {};
    \draw[very thick, red] (-5.5,1) .. controls (c) and (c) .. (-4,1) ;
    \draw[very thick, red] (-4,1) .. controls (c) and (c) .. (-4.5,0) ;
    \draw[very thick, red] (-4.5,0) .. controls (c) and (c) .. (-5.5,1) ;
    \draw [very thick, red](-8.5,0) .. controls (-8.7,-1) and (-8.7,-1) .. (-9.5,-1) node (v14) {};
    \draw [very thick, red](-8.5,0) .. controls (-8.7,-1) and (-8.5,-1) .. (-8,-1) node (v15) {};
    \draw [very thick, red](-6.5,0) .. controls (-6.5,-1) and (-6.5,-1) .. (-7.5,-1) node (v16) {};
    \draw [very thick, red](-6.5,0) .. controls (-6.5,-1) and (-6.5,-1) .. (-5.5,-1) node (v17) {};
    \draw [very thick, red](-4.5,0) .. controls (-4.3,-1) and (-4.5,-1) .. (-5,-1) node (v18) {};
    \draw [very thick, red](-4.5,0) .. controls (-4.3,-1) and (-4.3,-1) .. (-3.5,-1) node (v19) {};
    \draw [very thick, red] (v14) edge (v15);
    \draw [very thick, red] (v16) edge (v17);
    \draw [very thick, red] (v18) edge (v19);
    \node at (-6.5,3.4) {$p_1$};
    \node at (-9.5,-0.3) {$p_2$};
    \node at (-7.2,-0.3) {$p_3$};
    \node at (-5.8,-0.3) {$p_4$};
    \node at (-3.5,-0.3) {$p_5$};
    \node[blue] at (-6.5,-1.5) {$t_1$};
    \node[blue] at (-10,1) {$t_2$};
    \node[blue] at (-8,1.5) {$t_3$};
    \node[blue] at (-6.5,1.5) {$t_4$};
    \node[blue] at (-5,1.5) {$t_5$};
    \node[blue] at (-3,1) {$t_6$};
    
    \node[red] at (-1.5, 1) {\Large $\tau_\tri$};
    \draw[ultra thick,-{Classical TikZ Rightarrow[length=4pt]},decorate,decoration={snake,amplitude=1.5pt,pre length=2pt,post length=3pt}] (-1, 0) to (1, 0);
    \node[red] at (1.5, 1) {\Large $\tau_{\bft}$};
    
    \node[fill, circle, inner sep=1.8pt] (v2) at (5.5,0) {};
    \node[fill, circle, inner sep=1.8pt] (v3) at (7.5,0) {};
    \node[fill, circle, inner sep=1.8pt] (v1) at (3.5,0) {};
    \node[fill, circle, inner sep=1.8pt] (v4) at (9.5,0) {};
    \node[fill, circle, inner sep=1.8pt] (v5) at (6.5,3) {};
    \draw  (v1) to node [xshift=-6pt] {5}  (v2);
    \draw  (v2) to node [xshift=-7pt] {6} (v3);
    \draw  (v3) to node [xshift=-7pt] {7} (v4);
    \draw  (v5) to node [swap] {2} (v2);
    \draw  (v5) to node {3} (v3);
    \node at (4.5,2) {1};
    \node at (8.5,2) {4};
    \node at (2.5,2) {8};
    \node at (10.5,2) {9};
    \draw (6.5,3) .. controls (5.5,3) and (4,1.5) .. (3.5,0);
    \draw (6.5,3) .. controls (7.5,3) and (9,1.5) .. (9.5,0);
    \draw (5.5,0) .. controls (5.5,-2) and (2,-2) .. (2,0) .. controls (2,1.5) and (4.5,4) .. (6.5,3);
    \draw (7.5,0) .. controls (7.5,-2) and (11,-2) .. (11,0) .. controls (11,1.5) and (8.5,4) .. (6.5,3);
    \coordinate (w1) at (4,1) {} {};
    \coordinate (v6) at (5.5,1) {} {} {};
    \coordinate (w2) at (4.5,0) {} {};
    \coordinate (v7) at (6,1) {} {} {};
    \coordinate (v8) at (7,1) {} {} {};
    \coordinate (w3) at (6.5,0) {} {};
    \coordinate (v9) at (7.5,1) {} {} {};
    \coordinate (w4) at (9,1) {} {};
    \coordinate (w5) at (8.5,0) {} {};
    \coordinate (v11) at (5.5,-1) {} {} {};
    \coordinate (v12) at (7.5,-1) {} {};
    \coordinate (v10) at (5,-1) {} {} {};
    \coordinate (w6) at (3.5,-1) {} {};
    \coordinate (v13) at (8,-1) {} {};
    \coordinate (w7) at (9.5,-1) {} {};
    \draw [very thick, red] (v6) edge (v7);
    \draw [very thick, red] (v8) edge (v9);
    \draw [very thick, red] (v10) edge (v11);
    \draw [very thick, red] (v12) edge (v13);
    \draw [very thick, red](4,1) .. controls (2.5,1) and (2.5,-1) .. (3.5,-1);
    \draw [very thick, red](9,1) .. controls (10.5,1) and (10.5,-1) .. (9.5,-1);
    \coordinate (c) at (4.67,1) {} {};
    \draw[very thick, red] (4,1) .. controls (c) and (c) .. (5.5,1) ;
    \draw[very thick, red] (4.5,0) .. controls (c) and (c) .. (4,1) ;
    \coordinate (c) at (6.5,1) {} {};
    \draw[very thick, red] (6,1) .. controls (c) and (c) .. (7,1) ;
    \draw[very thick, red] (6.5,0) .. controls (c) and (c) .. (6,1) ;
    \coordinate (c) at (8.33,1) {};
    \draw[very thick, red] (9,1) .. controls (c) and (c) .. (8.5,0) ;
    \draw[very thick, red] (8.5,0) .. controls (c) and (c) .. (7.5,1) ;
    \draw [very thick, red](4.5,0) .. controls (4.3,-1) and (4.5,-1) .. (5,-1) node (v15) {};
    \draw [very thick, red](6.5,0) .. controls (6.5,-1) and (6.5,-1) .. (5.5,-1) node (v16) {};
    \draw [very thick, red](6.5,0) .. controls (6.5,-1) and (6.5,-1) .. (7.5,-1) node (v17) {};
    \draw [very thick, red](8.5,0) .. controls (8.7,-1) and (8.5,-1) .. (8,-1) node (v18) {};
    \draw [very thick, red] (3.5,-1) edge (v15);
    \draw [very thick, red] (v16) edge (v17);
    \draw [very thick, red] (v18) edge (9.5,-1);
    \node at (6.5,3.4) {$p_1$};
    \node at (3.5,-0.3) {$p_2$};
    \node at (5.8,-0.3) {$p_3$};
    \node at (7.2,-0.3) {$p_4$};
    \node at (9.5,-0.3) {$p_5$};
    \node[blue] at (4,-0.5) {\large $t_{p_2}$};
    \node[blue] at (5,0.5) {\large $t_{p_3}$};
    \node[blue] at (7,0.5) {\large $t_{p_4}$};
    \node[blue] at (8,1.5) {\large $t_{p_1}$};
    \node[blue] at (9,-0.5) {\large $t_{p_5}$};
    \end{tikzpicture}
    \caption{A complete train track on the sphere with five punctures obtained from the tuple $\mathbf{t}$ of triangles of an ideal triangle $\tri$.}
    \label{fig:5-sph}
\end{figure}
\end{ex}

From the definition of the piecewise linearity of the tropicalization of $\pot$, we have the following:
\begin{cor}\label{cor:TT_fan}
For each ideal triangulation of $\Sigma$, we have
\begin{align*}
    \bigcup_{\tau \in \TT^{\max}_\tri} \cV(\tau) = \dMF(\Sigma).
\end{align*}
Moreover, $\interior \cV(\tau_1) \cap \interior \cV(\tau_2) = \emptyset$ for $\tau_1 \neq \tau_2 \in \TT^{\max}_\tri$.
Namely, the set $\{\cV(\tau) \mid \tau \in \TT^{\max}_\tri \}$ gives a complete fan on $\dMF(\Sigma)$.
\end{cor}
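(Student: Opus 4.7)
The plan is to derive \cref{cor:TT_fan} directly from \cref{thm:TT_fan} together with the fact that $\pot$ is piecewise linear in the $\cA$-coordinate $\mathbf{\sfa}^{\tri}$. Indeed, the preceding lemma expresses each component as the minimum of finitely many linear forms,
\[
\pot_p \;=\; \min_{t \in T_{\tri,p}}\bigl(\sfa_{\alpha^{1}_{p,t}} + \sfa_{\alpha^{2}_{p,t}} - \sfa_{\alpha^{0}_{p,t}}\bigr),
\]
so $\pot$ is piecewise linear and the maximal domains of linearity $\widetilde{K} \in D_{\tri}(\pot)$ are polyhedral cones indexed by tuples $\mathbf{t} = (t_p)_{p \in P}$ with $t_p \in T_{\tri,p}$. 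At every point of $\dMF(\Sigma)$ each $\min$ is realised by at least one $t_p$, so the cones $\widetilde{K}$ cover $\dMF(\Sigma)$; two distinct maximal domains can meet only where some minimum is attained simultaneously by two different $t_p$'s, and such a coincidence cuts out a proper (codimension $\geq 1$) face of each. Hence the family $D_{\tri}(\pot)$ already forms a complete polyhedral fan on $\dMF(\Sigma)$.

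Next, transport this fan through the bijection of \cref{thm:TT_fan}. That theorem identifies $\TT^{\max}_{\tri} \to D_{\tri}(\pot)$, $\tau \mapsto \widetilde{\cV}(\tau)$, so the covering and disjoint-interior statements rewrite as
\[
\bigcup_{\tau \in \TT^{\max}_{\tri}} \widetilde{\cV}(\tau) = \dMF(\Sigma), \qquad \interior\widetilde{\cV}(\tau_1) \cap \interior\widetilde{\cV}(\tau_2) = \emptyset \ \text{for}\ \tau_1 \neq \tau_2.
\]
Restricting to the zero-section $\pot^{-1}(0) \cong \MF(\Sigma)$ and using the product decomposition $\widetilde{\cV}(\tau) = \cV(\tau) \times \bR^{P}$ noted in the proof of \cref{thm:TT_fan} descends the same properties to the cones $\cV(\tau)$ in $\MF(\Sigma)$, yielding the stated complete fan.

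The only delicate point is verifying that the indexing $\mathbf{t} \mapsto \widetilde{\cV}(\tau_{\mathbf{t}})$ really hits \emph{all} full-dimensional domains and no lower-dimensional ones. This was already settled in the proof of \cref{thm:TT_fan}: the train track $\tau_{\mathbf{t}}$ constructed from $\mathbf{t}$ is complete precisely when the corresponding polyhedron has full dimension (equivalently, its complementary regions are once-punctured monogons or unpunctured trigons), which is exactly what \cref{thm:subtrack} and the type analysis there ensure. So no extra work is required beyond citing \cref{thm:TT_fan} and unpacking the piecewise-linear structure of $\pot$.

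I expect the bookkeeping of what "maximal domain of linearity" means for the vector-valued map $\pot = (\pot_p)_{p \in P}$ to be the only subtle step: one must note that maximal domains of linearity of the tuple correspond to simultaneous choices of one $t_p$ for each $p$, not one global $t$. Once this indexing is made explicit, the union and disjoint-interior statements are formal consequences of the $\min$-formula, and the bijection of \cref{thm:TT_fan} finishes the argument.
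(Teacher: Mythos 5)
Your proposal is correct and takes essentially the same route as the paper: the paper dismisses the corollary as an immediate consequence of the piecewise linearity of $\pot$ together with \cref{thm:TT_fan}, and you simply unpack why a vector-valued piecewise-linear map whose components are finite minima of linear forms has maximal domains of linearity covering the whole space with disjoint interiors, and then transport that observation through the bijection $\TT^{\max}_\tri \to D_\tri(\pot)$. (One small note: the $\cV(\tau)$ in the corollary's display should really be $\widetilde{\cV}(\tau) \subset \dMF(\Sigma)$, so your final restriction to the zero-section is not needed for the stated claim.)
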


\subsection{Elementary moves for train tracks}

In this subsection, we are going to consider the deformations of train tracks.
When considering a move of an ideal triangulation arising from a mapping class, we can decompose it into a composition of elementary moves, called flips.
For a kind of train track which ``suit'' to a mapping class, we can consider the move of the train track and decompose it to some ``elementary moves" of train tracks as flips of ideal triangulations.

First, we recall the most basic relation of train tracks, called \emph{carrying} (\emph{cf}. the carrying for the measured foliations \cref{subsec:traintrack}).
For two train tracks $\tau_1, \tau_2 \subset \Sigma$, $\tau_1$ is carried by $\tau_2$, we write $\tau_1 \prec \tau_2$, if $\tau_1$ is homotopic to a train track $\tau'_1$ such that $\tau'_1 \subset \interior N_{\tau_2}$ and it is transverse to the ties of $N_{\tau_2}$.
From the definition of the carrying for the measured foliations, it is clear that
\begin{align*}
    \cV(\tau_1) \subset \cV(\tau_2) \subset \MF(\Sigma)
\end{align*}
for any train tracks $\tau_1, \tau_2$ such that $\tau_1 \prec \tau_2$.
Thus, the composition 
\begin{align*}
    \psi_{\tau_2}^{-1} \circ \psi_{\tau_1}: V(\tau_1) \to V(\tau_2)
\end{align*}
is (the restriction of) a linear map.

Let us think that $V(\tau_i)$ is a cone of the vector space $\bR^{B(\tau_i)}$
for $i = 1,2$.
Then, the presentation matrix of some of the inverses of the linear extensions of the map $\psi_{\tau_2}^{-1} \circ \psi_{\tau_1}$ is called \emph{transition matrix} of the carrying $\tau_1 \prec \tau_2$.
We define it here.

Let $M = (m_{b_1, b_2})_{b_1 \in B(\tau_1), b_2 \in B(\tau_2)}$ denote the transition matrix of the carrying $\tau_1 \prec \tau_2$.
Then, 
\begin{align*}
    m_{b_1, b_2} := | r_{\tau_2}^{-1}(p_{b_2}) \cap b'_1 |
\end{align*}
where $p_{b_2}$ is an interior point of the branch ${b_2} \in B(\tau_2)$ and $b'_1$ is the branch of $\tau'_1$ corresponding to $b_1$ via the homotopy between $\tau_1$ and $\tau'_1$.
Obviously, this definition depends on the choice of the embedding $\tau'_1 \subset N_{\tau_2}$, the retraction $r_{\tau_2}: N_{\tau_2} \searrow \tau_2$ and the points $p_{b_2} \in b_2 \in B(\tau_2)$.
Owing to the switch conditions, these matrices define the same map on $\psi_{\tau_2}^{-1} \circ \psi_{\tau_1}(V(\tau_1))$ and this map coincides with the inverse of
$\psi_{\tau_2}^{-1} \circ \psi_{\tau_1}$.
We refer the reader to \cite[Proposition 1.7.5]{PH} for more details.


\begin{defi}[Elementary moves for train tracks]
Let $\tau$ be a train track.
An \emph{elementary move} is a local deformation of the train track around a branch $b_k \in B(\tau)$.
It is classified into 3 types:
\begin{enumerate}
    \item Splitting:
    If the train track $\tau$ looks like the left of \cref{fig:split} around the branch $b_k$, then there are 3 possibilities of splittings of it, called left, right and central splitting.
    They are drawn in the right of \cref{fig:split}.
    \item Folding:
    It is a general term for reverse deformations of any possibilities of splittings.
    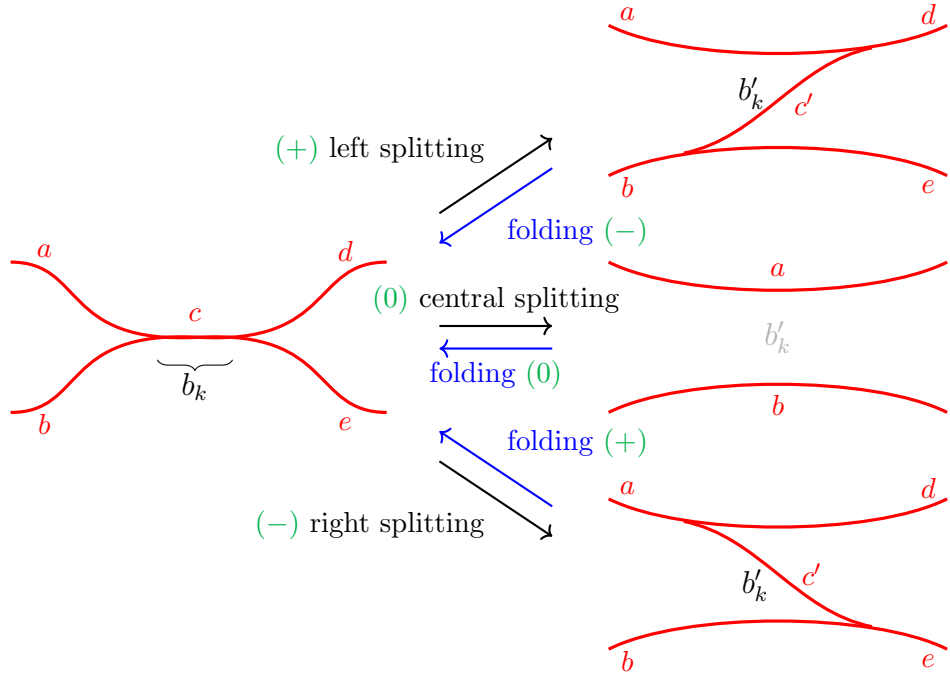
\begin{figure}[h]
        \centering
        \begin{tikzpicture}[auto]
        \draw [very thick, red](-4.95,1) .. controls (-3.95,1) and (-4.3,-0.1) .. (-2.45,0) .. controls (-0.65,-0.1) and (-0.95,1) .. (0.05,1);
        \draw [very thick, red] (3,4.15) .. controls (4,3.65) and (6.5,3.65) .. (7.5,4.15);
        \draw [very thick, red](3,2.15) .. controls (4,2.65) and (6.5,2.65) .. (7.5,2.15);
        \draw [very thick, red](4,2.45) .. controls (5,2.65) and (5.5,3.65) .. (6.5,3.85);
        \draw [very thick, red](3,1) .. controls (4,0.5) and (6.5,0.5) .. (7.5,1);
        \draw [thick, ->](0.75,1.65) -- node{\small \textcolor{mygreen}{$(+)$} left splitting } (2.25,2.65);
        \draw [thick, ->](0.75,0.15) --node{\small \textcolor{mygreen}{$(0)$} central splitting} (2.25,0.15);
        \draw [thick, ->](0.75,-1.65) --node[swap]{\small \textcolor{mygreen}{$(-)$} right splitting} (2.25,-2.65);
        \begin{scope}[yscale=-1]
        \draw [very thick, red](-4.95,1) .. controls (-3.95,1) and (-4.3,-0.1) .. (-2.45,0) .. controls (-0.65,-0.1) and (-0.95,1) .. (0.05,1);
        \draw [very thick, red] (3,4.15) .. controls (4,3.65) and (6.5,3.65) .. (7.5,4.15);
        \draw [very thick, red](3,2.15) .. controls (4,2.65) and (6.5,2.65) .. (7.5,2.15);
        \draw [very thick, red](4,2.45) .. controls (5,2.65) and (5.5,3.65) .. (6.5,3.85);
        \draw [very thick, red](3,1) .. controls (4,0.5) and (6.5,0.5) .. (7.5,1);
        \end{scope}
        \node [red] at (-4.5,1.15) {\small $a$};
        \node [red] at (-0.5,1.15) {\small $d$};
        \node [red] at (-2.5,0.3) {\small $c$};
        \node [red] at (-4.5,-1.15) {\small $b$};
        \node [red] at (-0.5,-1.15) {\small $e$};
        \node [red] at (3.25,4.3) {\small $a$};
        \node [red] at (7.25,4.3) {\small $d$};
        \node [red] at (5.6,3.1) {\small $c'$};
        \node [red] at (3.25,2) {\small $b$};
        \node [red] at (7.25,2) {\small $e$};
        \node [red] at (3.25,-2) {\small $a$};
        \node [red] at (7.25,-2) {\small $d$};
        \node [red] at (5.7,-3.15) {\small $c'$};
        \node [red] at (3.25,-4.3) {\small $b$};
        \node [red] at (7.25,-4.3) {\small $e$};
        \node [red] at (5.25,0.9) {\small $a$};
        \node [red] at (5.25,-0.9) {\small $b$};
        \draw [blue, thick, ->](2.25,2.25) --node {\small folding \textcolor{mygreen}{$(-)$}} (0.75,1.25);
        \draw [blue, thick, ->](2.25,-0.15) -- node {\small folding \textcolor{mygreen}{$(0)$}} (0.75,-0.15);
        \draw [blue, thick, ->](2.25,-2.25) --node[swap] {\small folding \textcolor{mygreen}{$(+)$}} (0.75,-1.25);
        \draw[decoration={brace, amplitude=4pt}, decorate] (-2,-0.3) -- (-3,-0.3);
        \node at (-2.5,-0.65) {$b_k$};
        \node at (4.9,3.25) {$b'_k$};
        \node at (4.95,-3.3) {$b'_k$};
        \node[gray!60] at (5.25,0) {$b'_k$};
        \end{tikzpicture}
        \caption{Splitting and folding.}
        \label{fig:split}
    \end{figure}
    \item Shifting:
    If the train track $\tau$ looks like the left or the right of \cref{fig:shift} around the branch $b_k$, then the deformation both of the left to the right and the right to left are called \emph{shifting}.
    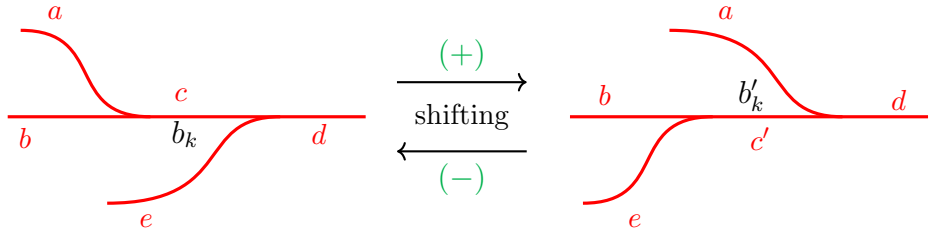
\begin{figure}[h]
        \centering
        \begin{tikzpicture}[scale=1.15, auto]
        \draw [very thick, red](-3,0) -- (1,0) node (v1) {} -- (v1) -- cycle;
        \draw [very thick, red](-1.35,0) .. controls (-2.35,0) and (-1.85,1) .. (-2.85,1);
        \draw [very thick, red](0.15,0) .. controls (-0.85,0) and (-0.35,-1) .. (-1.85,-1);
        \node [red] at (-2.45,1.2) {\small $a$};
        \node [red] at (-2.8,-0.25) {\small $b$};
        \node [red] at (-1.4,-1.2) {\small $e$};
        \node [red] at (-1,0.25) {\small $c$};
        \node [red] at (0.6,-0.2) {\small $d$};
        \begin{scope}[yscale=-1]
        \draw [very thick, red](3.5,0) -- (7.5,0) node (v1) {} -- (v1) -- cycle;
        \draw [very thick, red](5.15,0) .. controls (4.15,0) and (4.65,1) .. (3.65,1);
        \draw [very thick, red](6.65,0) .. controls (5.65,0) and (6.15,-1) .. (4.65,-1);
        \node [red] at (4.25,1.2) {\small $e$};
        \node [red] at (3.9,-0.25) {\small $b$};
        \node [red] at (5.3,-1.2) {\small $a$};
        \node [red] at (5.7,0.25) {\small $c'$};
        \node [red] at (7.3,-0.2) {\small $d$};
        \end{scope}
        \draw [thick, ->](1.5,0.4) -- node [mygreen] {$(+)$} (3,0.4) node [red] {};
        \draw [thick, ->](3,-0.4) -- node [mygreen] {$(-)$} (1.5,-0.4) node [red] {};
        \node at (2.25,0) {\small shifting};
        \node at (-0.95,-0.2) {$b_k$};
        \node at (5.6,0.25) {$b'_k$};
        \end{tikzpicture}
        \caption{Shifting.}
        \label{fig:shift}
    \end{figure}
\end{enumerate}
\end{defi}

\begin{lem}\label{lem:trans_mat_elem}
If the train track $\tau_1$ is obtained from $\tau_2$ by splitting or shifting at a branch $b_k \in B(\tau_2)$, then $\tau_1 \prec \tau_2$.
Moreover, there is a transition matrix of this carrying such that the corresponding linear map changes only the component which corresponds to the branch $b_k$, and the transformation is written by
\begin{align}\label{eq:trans_split}
c &= \begin{cases}
    a + c' + e & \mbox{if the move is a left splitting,}\\
    a + b & \mbox{if the move is a central splitting,}\\
    b + c' + d & \mbox{if the move is a right splitting,}
    \end{cases}\\
c &= a + b \qquad \mbox{if the move is a shifting}.\label{eq:trans_shift}
\end{align}
Here, the symbols $a,b,c,c',d,e$ mean some components of an element of $\bR^{B(\tau_1)}$ and $\bR^{B(\tau_2)}$, which are written in \cref{fig:split,fig:shift} by red letters.
\end{lem}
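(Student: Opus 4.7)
Given the local nature of each elementary move, outside a small disk neighborhood of the branch $b_k$ the train tracks $\tau_1$ and $\tau_2$ agree identically, so the plan is to reduce the proof to a purely local calculation in this neighborhood.

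First, to establish $\tau_1 \prec \tau_2$, we exhibit, for each of the four types of moves, an explicit representative $\tau'_1$ of $\tau_1$ embedded in $\interior N_{\tau_2}$ transverse to the ties. From \cref{fig:split,fig:shift}, the local model of $\tau_1$ after a splitting or shifting fits inside the portion of $N_{\tau_2}$ over $b_k$ together with its two adjacent switch regions; outside this region we set $\tau'_1 = \tau_2$. Transversality to the ties is immediate from the smoothness of the local drawings.

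Next, the transition matrix $M = (m_{b_1, b_2})$ is computed via $m_{b_1, b_2} = |r_{\tau_2}^{-1}(p_{b_2}) \cap b'_1|$. For every $b_2 \in B(\tau_2)$ lying outside the local neighborhood of $b_k$, the tie over $p_{b_2}$ meets $\tau'_1$ exactly once, at the unique branch $b_1 \in B(\tau_1)$ identified with $b_2$. Hence those columns of $M$ form an identity block, confirming that only the column indexed by $b_k = c$ can be non-trivial---this is the precise content of the assertion that the linear map changes only the component of $c$. For this remaining column we pick an interior point $p_c$ of $c$ and read off the branches of $\tau'_1$ whose images cross its tie: from the local picture these are the extensions of $a, c', e$ for a left splitting, of $a, b$ for a central splitting, of $b, c', d$ for a right splitting, and of $a, b$ for a shifting. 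Combining with the identity block above, the induced linear relation between $\nu_1 \in V(\tau_1)$ and its image $\nu_2 \in V(\tau_2)$ reads as \eqref{eq:trans_split} or \eqref{eq:trans_shift} once one abbreviates the weights by the labels of their branches.

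The hardest part will be checking that the intersection count is well-defined independently of the choices (the embedding of $\tau'_1$, the retraction $r_{\tau_2}$, and the basepoints $p_{b_2}$); however, this is already guaranteed by the switch-condition argument cited in the paragraph preceding the lemma. The shifting case also deserves slightly more attention than the splittings because its local configuration is asymmetric, but once an explicit representative $\tau'_1$ is drawn, one sees directly that the tie over any interior point of $c$ meets exactly the two strands labeled $a$ and $b$.
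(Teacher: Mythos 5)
Your proposal is correct and follows essentially the same approach as the paper: the paper's proof simply points to \cref{fig:emb_carry} for the embedding $\tau'_1 \subset N_{\tau_2}$ and notes that the formulas follow by a suitable choice of basepoints, while you spell out in words what those figures encode (the identity block away from $b_k$, the intersection count over a tie in the interior of $c$, and the well-definedness granted by the switch conditions via \cite[Proposition 1.7.5]{PH}).
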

\begin{proof}
We can see the existence of the train track $\tau'_1$ which satisfies the conditions of the carrying $\tau_1 \prec \tau_2$ in \cref{fig:emb_carry}.
The explicit transformations \eqref{eq:trans_split} and \eqref{eq:trans_shift} are given by this embeddings and some suitable choice of $p_{b_1}$ for $b_1 \in B(\tau_1)$.
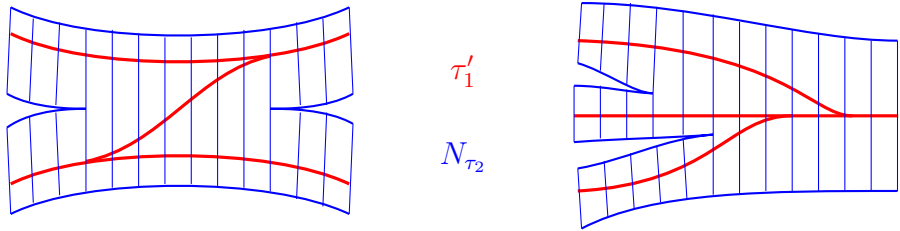
\begin{figure}[h]
    \centering
    \begin{tikzpicture}[baseline=85]
    \draw [very thick, red] (3,4.15) .. controls (4,3.65) and (6.45,3.65) .. (7.5,4.15);
    \draw [very thick, red](3,2.15) .. controls (4,2.65) and (6.45,2.65) .. (7.5,2.15);
    \draw [very thick, red](4,2.45) .. controls (5,2.65) and (5.5,3.65) .. (6.45,3.85);
    \draw [thick, blue](3,4.5) .. controls (4,4) and (6.45,4) .. (7.5,4.5);
    \draw [thick, blue](3,1.75) .. controls (4,2.25) and (6.45,2.25) .. (7.5,1.75);
    \draw [thick, blue](2.95,3.35) .. controls (3.3,3.15) and (3.85,3.15) .. (4,3.15) .. controls (3.85,3.15) and (3.3,3.15) .. (2.95,2.9);
    \draw [thick, blue](7.55,3.35) .. controls (7.2,3.15) and (6.65,3.15) .. (6.45,3.15) .. controls (6.65,3.15) and (7.2,3.15) .. (7.55,2.95);
    \draw [blue](4,4.2) -- (4,2.05);
    \draw [blue](4.35,4.2) -- (4.35,2.1);
    \draw [blue](4.7,4.15) -- (4.7,2.1);
    \draw [blue](5.05,4.1) -- (5.05,2.15);
    \draw [blue](5.4,4.1) -- (5.4,2.15);
    \draw [blue](6.45,4.2) -- (6.45,2.05);
    \draw [blue](5.75,4.15) -- (5.75,2.1);
    \draw [blue](6.1,4.2) -- (6.1,2.1);
    \draw [blue](6.8,4.3) -- (6.85,3.2);
    \draw [blue](6.85,3.1) -- (6.8,1.95);
    \draw [blue](7.15,4.35) -- (7.2,3.2);
    \draw [blue](7.2,3.1) -- (7.15,1.85);
    \draw [blue](7.5,4.5) -- (7.55,3.35);
    \draw [blue](7.55,2.95) -- (7.5,1.75);
    \draw [blue](3.65,4.3) -- (3.6,3.2);
    \draw [blue](3.6,3.1) -- (3.65,2);
    \draw [blue](3.3,4.35) -- (3.25,3.25);
    \draw [blue](3.25,3.05) -- (3.3,1.9);
    \draw [blue](3,4.5) -- (2.95,3.35);
    \draw [blue](2.95,2.9) -- (3,1.75);
    \end{tikzpicture}
    \qquad 
    $\begin{array}{c}
    \textcolor{red}{\tau'_1}\\\\
    \textcolor{blue}{N_{\tau_2}}
    \end{array}$
    \qquad
    \begin{tikzpicture}[yscale=-1, baseline=-2]
    \draw [very thick, red](-3.1,0) -- (1.05,0) node (v1) {} -- (v1) -- cycle;
    \draw [very thick, red](-0.2,0) .. controls (-1.05,0) and (-1.25,0.9) .. (-3.05,1);
    \draw [very thick, red](0.6,0) .. controls (0.05,0) and (-0.3,-0.9) .. (-3.05,-1);
    \draw [blue, thick](-3,1.5) .. controls (-2.05,1) and (-0.5,1) .. (1.2,1);
    \draw [blue, thick](-3,-1.5) .. controls (-1.5,-1.5) and (0,-1) .. (1.2,-1);
    \draw [blue, thick](-3.05,0.7) .. controls (-2.45,0.6) and (-1.9,0.3) .. (-1.25,0.25) .. controls (-1.9,0.3) and (-2.45,0.3) .. (-3.1,0.35);
    \draw [blue, thick](-3.1,-0.4) .. controls (-2.7,-0.4) and (-2.3,-0.3) .. (-2.05,-0.3) .. controls (-2.3,-0.3) and (-2.7,-0.6) .. (-3.05,-0.7);
    \draw [blue](-3,1.5) -- (-3.05,0.7);
    \draw [blue](-2.65,1.35) -- (-2.7,0.6);
    \draw [blue](-2.3,1.25) -- (-2.35,0.5);
    \draw [blue](-1.95,1.15) -- (-2,0.4);
    \draw [blue](-1.6,1.1) -- (-1.65,0.3);
    \draw [blue](-1.25,1.05) -- (-1.25,0.25);
    \draw [blue](-2.05,0.3) -- (-2.05,-0.3);
    \draw [blue](-2.4,0.3) -- (-2.4,-0.35);
    \draw [blue](-2.75,0.3) -- (-2.75,-0.4);
    \draw [blue](-3.1,0.35) -- (-3.1,-0.4);
    \draw [blue](-3.05,-0.7) -- (-3,-1.5);
    \draw [blue](-2.75,-0.6) -- (-2.7,-1.5);
    \draw [blue](-2.4,-0.4) -- (-2.35,-1.5);
    \draw [blue](-2.05,-0.3) -- (-2,-1.45);
    \draw [blue](-1.65,0.25) -- (-1.65,-1.4);
    \draw [blue](-1.25,0.25) -- (-1.25,-1.35);
    \draw [blue](-0.9,1.05) -- (-0.9,-1.25);
    \draw [blue](-0.55,1) -- (-0.55,-1.2);
    \draw [blue](-0.2,1) -- (-0.2,-1.15);
    \draw [blue](0.15,1) -- (0.15,-1.1);
    \draw [blue](0.5,1) -- (0.5,-1.05);
    \draw [blue](0.85,1) -- (0.85,-1);
    \draw [blue](1.2,1) -- (1.2,-1);
    \end{tikzpicture}
    \caption{The embeddings corresponding to the carrying $\tau_1 \prec \tau_2$.}
    \label{fig:emb_carry}
\end{figure}
\end{proof}


\begin{cor}\label{cor:cone_elem_move}
\begin{enumerate}
    \item Let the train tracks $\tau_R$, $\tau_L$ and $\tau_C$ are obtained from $\tau$ by the right, left and central splitting along a branch $b \in B(\tau)$, respectively.
    Then, we have $\cV(\tau) = \cV(\tau_R) \cup \cV(\tau_L)$ and $\cV(\tau_C) = \cV(\tau_R) \cap \cV(\tau_L)$.
    \item If the train track $\tau'$ is obtained from $\tau$ by shifting, then $\cV(\tau) = \cV(\tau')$ in $\MF(\Sigma)$.
\end{enumerate}
\end{cor}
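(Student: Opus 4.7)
The plan is to leverage Lemma~\ref{lem:trans_mat_elem} directly: each elementary move produces a carrying $\tau_\bullet \prec \tau$ with an explicit transition formula, and the statement reduces to identifying the image of the embedding $\psi_\tau^{-1}\circ \psi_{\tau_\bullet} : V(\tau_\bullet) \hookrightarrow V(\tau)$ inside $V(\tau)$.

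For part (1), first I would note that the three carryings $\tau_R,\tau_L,\tau_C \prec \tau$ produce the inclusions $\cV(\tau_R),\cV(\tau_L),\cV(\tau_C) \subset \cV(\tau)$ for free. To pin down the three images inside $V(\tau)$, I would use the switch conditions at the two endpoints of $b_k$, namely $c = a+b = d+e$, combined with the transition rules of Lemma~\ref{lem:trans_mat_elem}. Writing $c'$ for the new weight, the right splitting rule $c = b + c' + d$ forces $c' = a-d = e-b$ (the two expressions agreeing thanks to the switch condition), so its image is the subcone $\{\nu \in V(\tau) : a \geq d\}$, equivalently $\{e \geq b\}$; similarly the left splitting rule gives the subcone $\{b \geq e\} = \{d \geq a\}$; and the central splitting rule $c = a+b$, together with the requirement that the two ``strands'' through the former switches of $b_k$ match up, gives the face $\{a=d,\ b=e\}$. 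Since any $\nu \in V(\tau)$ satisfies $a \geq d$ or $a \leq d$, the union of the right and left images covers $V(\tau)$, giving $\cV(\tau) = \cV(\tau_R)\cup \cV(\tau_L)$; and their intersection, namely $\{a=d\}$ (which forces $b=e$ by the switch condition), coincides with the image of $V(\tau_C)$, giving the second equality.

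For part (2), a shift is a reversible local move: the inverse of the shift in \cref{fig:shift} is itself a shift applied to the new track. Applying Lemma~\ref{lem:trans_mat_elem} in both directions yields $\tau' \prec \tau$ and $\tau \prec \tau'$, hence $\cV(\tau) = \cV(\tau')$. The main subtlety I expect is in the central splitting part of (1): since $\tau_C$ has strictly fewer branches than $\tau$, the carrying is not given by an obvious coordinate inclusion, and one must carefully check that the image is exactly the codimension-one face $\{a=d,\ b=e\}$ rather than some thicker or thinner subcone. Once this geometric matching is in hand, the algebraic verification of the union/intersection identities is just bookkeeping in the variables $a,b,c,c',d,e$ subject to the switch conditions.
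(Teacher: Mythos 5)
Your proof is correct, but it takes a genuinely different route from the paper. The paper argues geometrically: for $[F,\mu]\in\cV(\tau)$, it cuts and opens $F$ along singular leaves near the large branch $b_k$ and observes that the resulting partial foliation must sit inside $N_{\tau_R}$ or $N_{\tau_L}$ (or both, giving $N_{\tau_C}$), depending on the local configuration of singular leaves. Your argument instead works entirely at the level of the polyhedral cones $V(\cdot)\subset\bR^{B(\cdot)}$, using the switch conditions $c=a+b=d+e$ and the explicit transition formulas of \cref{lem:trans_mat_elem} to pin down the images of $\psi_\tau^{-1}\circ\psi_{\tau_\bullet}$ as the half-cones $\{a\geq d\}$, $\{a\leq d\}$ and their common face $\{a=d,\,b=e\}$, after which the union/intersection identities are immediate and transport to $\MF(\Sigma)$ via the embedding $\psi_\tau$. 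The subtlety you flag about the central splitting is real but resolves just as you suspect: in $\tau_C$ the branches $a,d$ (resp.\ $b,e$) are identified, so the image is exactly the codimension-one face, and the inequality $c'\geq0$ together with the switch conditions of $\tau_R,\tau_L$ are automatically consistent with those of $\tau$. Your route is arguably more precise (it gives the images exactly, where the paper's ``by the same argument'' for $\tau_C$ is left informal), while the paper's is shorter and more geometric; both are valid.
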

\begin{proof}
(2): By \cref{lem:trans_mat_elem}, $\tau' \prec \tau$ and $\tau \prec \tau'$.
Thus, we have the inclusions of both directions between $\cV(\tau)$ and $\cV(\tau')$.

(1): Since $\tau_R, \tau_R \prec \tau$, we have $\cV(\tau_R) \cup \cV(\tau_L) \subset \cV(\tau)$.
If a measured foliation $(F, \mu)$ represents a point in $\cV(\tau)$, we can deform $F$ by cutting and opening along the singular leaves so that contained in $N_\tau$ whose leaves are transversal to the ties.
After cutting and opening it more around the branch $b$, the support is contained in $N_{\tau_R}$ or $N_{\tau_L}$ (or both of these).
It depends on the configuration of the singular leaves around $b$ that which one occurs.
Namely, there is no other possibilities, so we have $[F, \mu] \in \cV(\tau_R) \cup \cV(\tau_L)$.
We can see that $\cV(\tau_C) = \cV(\tau_R) \cap \cV(\tau_L)$ by the same argument.
\end{proof}

\subsection{Binary relations of train tracks suited to ideal triangulations arising from flips}
Let $\tau$ be a train track suited to $\tri$ an ideal triangulation of $\Sigma$.
Let us focus on a quadrilateral $Q$ consisting of two ideal triangles of $\tri$.
Also, let us consider the ideal triangle $\tri'$ which is obtained from $\tri$ by flipping along the diagonal arc $\alpha$ of $Q$.
Then, there are some possibilities of train tracks which is suited to $\tri'$ and agree with $\tau$ outside of $Q$.
In this subsection, we investigate these possibilities using the elementary moves of train tracks.

First, let us consider the train track $\tau \in \TT_\tri$ whose local model in the ideal triangles of $Q$ are both of type III.

\begin{lem}\label{lem:decomp_slide}
There is only one train track $\tau' \in \TT_{\tri'}$ which is obtained from $\tau$ by the sequence of elementary moves along the branches inside $Q$.
\end{lem}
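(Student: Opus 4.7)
The plan is to describe the local picture of $\tau$ inside the quadrilateral $Q$ explicitly, exhibit a sequence of shiftings (and possibly splittings/foldings) that produces a train track $\tau' \in \TT_{\tri'}$ agreeing with $\tau$ outside $Q$, and then argue that the resulting configuration inside $Q$ is forced by the boundary data together with the constraint of being suited to $\tri'$.

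First I would set up the local picture. Since $\tau$ is of type III on each of the two ideal triangles forming $Q$, inside each triangle we see three short branches forming a small triangular loop, together with three trivalent switches, one on each side of the ideal triangle. In particular, there is one switch on the diagonal $\alpha$ inside each of the two triangles, and these two switches are joined by a single long branch crossing $\alpha$; the remaining four switches on $\partial Q$ continue into the complementary triangles of $\tri$ outside $Q$ via long branches that are untouched by any move inside $Q$.

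Next I would construct $\tau'$. The decisive move is a shifting at the long branch crossing $\alpha$: by \cref{lem:trans_mat_elem} such a shift replaces this long branch by a new short branch lying inside one of the new triangles of $\tri'$, while producing a new long branch crossing $\alpha'$ instead of $\alpha$. Performing an appropriate finite sequence of such shiftings (one can also invoke \cref{cor:cone_elem_move}(2) to see that the set $\cV(\tau)$ is preserved at each step) rotates the ``bowtie'' across the flipped diagonal, and produces a train track $\tau'$ that is transverse to $\tri'$ and is again of type III on each of the two new triangles of $Q$. In particular $\tau' \in \TT_{\tri'}$.

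For uniqueness, I would argue as follows. Any $\tau'' \in \TT_{\tri'}$ obtained from $\tau$ by moves supported inside $Q$ must meet $\partial Q$ at the same four points, with the same tangent directions, as $\tau$ does; these four pieces of ``entry data'' are preserved by every elementary move performed in the interior of $Q$. The condition that $\tau''$ be suited to $\tri'$ then forces each of its two local configurations in $Q$ to be one of the three patterns I, II, III of \cref{fig:suit_train_track}, and among these only one is compatible with the fixed boundary data, namely the $\tau'$ constructed above. The main obstacle I anticipate is the bookkeeping: one must check systematically that each shifting interacts correctly with the three short branches in each triangle and that no alternative sequence can escape this combinatorial constraint. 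A careful case analysis on a clearly drawn local picture, together with the observation that the four boundary entries plus the switch conditions determine a configuration of type I/II/III uniquely, should complete the argument.
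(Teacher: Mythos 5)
Your proposal contains a concrete geometric error in the existence part. You claim the decisive move at the long branch $b$ crossing $\alpha$ is a \emph{shifting}, but the local picture around $b$ in the type III--type III configuration is the \emph{splitting} configuration of \cref{fig:split}, not the shifting configuration of \cref{fig:shift}. To see this, note that $b$ connects the two infinitesimal (cusped) trigons, and the switch at each end of $b$ opens away from $b$ into its trigon; the two cusps therefore point in opposite directions along $b$, which is precisely the local model of a splitting. In a shifting the two cusps sit on opposite sides of the central branch and point in the \emph{same} direction. Consequently the first move on $b$ is forced to be a left or right splitting, as in the paper. Your subsequent appeal to \cref{cor:cone_elem_move}(2) (that $\cV$ is preserved at each step) then fails: a splitting strictly changes the cone, and only the net composition split--shift--shift--fold recovers $\cV(\tau')=\cV(\tau)$.

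Beyond the wrong move type, the existence argument is not actually carried out: ``performing an appropriate finite sequence of such shiftings \ldots rotates the bowtie'' does not exhibit the sequence or verify it produces a track suited to $\tri'$. The paper's proof instead shows that at each stage exactly one non-reversing move is available (split at $b$, then shifts at $b_1,b_2$, then fold at $b'$), and that the left and right initial choices yield the same $\tau'$; this simultaneously establishes existence and uniqueness. Your uniqueness argument via boundary data ($\tau''$ must meet $\partial Q$ at the same four points with the same tangents, forcing its local type in each new triangle) is a genuinely nice and more conceptual observation than the paper's case-by-case check, and would make a clean complement to a correct existence construction. But as written the proposal does not establish that a $\tau'$ suited to $\tri'$ is reachable by elementary moves at all, so the lemma is not proved.
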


\begin{proof}
Now, $\tau$ locally likes in the configuration as the left top of \cref{fig:decomp_slide}. 
Hence we can perform the elementary move along only the branch which is transversal to $\alpha$ since the other branches inside $Q$ do not appear in the local models of the elementary moves.
This move is left or right splitting, so we choose the left splitting, first.
For the train track shown in the right top of \cref{fig:decomp_slide}, we can perform only shifting at the branches $b_1$ and $b_2$ since the other possibility is nothing but the reversal operation of the first splitting.
After that, there is only one possibility of an elementary move, which is folding at $b'$.
Then, we get the train track $\tau'$ which is suited to the ideal triangulation $\tri' = f_\alpha(\tri)$.

Also, we can adapt the same argument for the case that we chose the right splitting.
In this case, also we get the same train track $\tau' \in \TT_{\tri'}$.
\end{proof} 

\begin{figure}[h]
\def\figurei{
\begin{tikzpicture}[auto, scale=0.9]
\begin{scope}[out=30,in=150,relative, shift={(-2,-0.5)}]
\draw[rotate=-90, very thick, red] (-1,0.5)
to (0,0.5) coordinate (v5) {} {}
to (-0.5,1.5) coordinate (v1) {} {}
to (-1,0.5) coordinate (v3) {} {};
\draw[rotate=90, very thick, red] (0,-3.5)
to (1,-3.5) coordinate (v9) {} {}
to (0.5,-2.5) coordinate (v2) {} {}
to (0,-3.5) coordinate (v7) {} {};
\end{scope}
\draw [very thick, red] (v1) to node {} (v2);
\node (v4) at (-2,1.5) {};
\node (v6) at (-2,-1.5) {};
\node (v8) at (2,-1.5) {};
\node (v10) at (2,1.5) {};
\draw [very thick, red] (v3) to node[swap] {} (v4);
\draw [very thick, red] (v5) to node {} (v6);
\draw [very thick, red] (v7) to node[swap] {} (v8);
\draw [very thick, red] (v9) to node {} (v10);
\node [fill, circle, inner sep=1.5pt] (v1) at (0,2) {};
\node [fill, circle, inner sep=1.5pt] (v3) at (0,-2) {};
\node [fill, circle, inner sep=1.5pt] (v2) at (-3,0) {};
\node [fill, circle, inner sep=1.5pt] (v4) at (3,0) {};
\draw  (v1) edge (v2);
\draw  (v2) edge (v3);
\draw  (v3) edge (v4);
\draw  (v4) edge (v1);
\draw  (v1) edge node[xshift=-14pt, yshift=18pt] {$\alpha$} (v3);
\node[red] at (0.25,-0.4) {$b$};
\node at (-0.55,-0.95) {$t_1$};
\node at (0.55,-0.95) {$t_2$};
\node at (0.5,2) {$p_1$};
\node at (-3,0.5) {$p_2$};
\node at (-0.5,-2) {$p_3$};
\node at (3,-0.5) {$p_4$};
\end{tikzpicture}}
\def\figureii{
\begin{tikzpicture}[scale=0.9]
\draw [very thick, red] (-1.9,1.3) 
.. controls (-1.4,0.75) and (-1.4,0.75) .. (0,0.75)
.. controls (1.5,0.75) and (1.5,0.75) .. (2,1.25);
\draw[rotate=180, very thick, red] (-2,1.2) 
.. controls (-1.5,0.7) and (-1.5,0.7) .. (0,0.7)
.. controls (1.5,0.7) and (1.5,0.65) .. (2,1.2);
\node [fill, circle, inner sep=1.5pt] (v1) at (0,2) {};
\node [fill, circle, inner sep=1.5pt] (v2) at (-3,0) {};
\node [fill, circle, inner sep=1.5pt] (v3) at (0,-2) {};
\node [fill, circle, inner sep=1.5pt] (v4) at (3,0) {};
\draw  (v1) edge (v2);
\draw  (v2) edge (v3);
\draw  (v3) edge (v4);
\draw  (v4) edge (v1);
\draw [very thick, red](-1.45,0.87) .. controls (-1,0.5) and (-1,-0.45) .. (-1.55,-0.8);
\draw [very thick, red](1.5,0.85) .. controls (1.05,0.5) and (1,-0.45) .. (1.45,-0.8);
\draw [very thick, red](0.55,0.75) .. controls (0,0.75) and (0,-0.7) .. (-0.55,-0.7);
\node[red] at (1,.9) {$b_1$};
\node[red] at (-1,-1.1) {$b_2$};
\end{tikzpicture}
}
\def\figureiii{
\begin{tikzpicture}[scale=0.9]
\begin{scope}[shift={(1.5,4.5)}]
\draw [very thick, red] (-4,-3.05) 
.. controls (-3.5,-3.6) and (-3,-3.55) .. (-1.5,-3.55)
.. controls (0,-3.6) and (0.5,-3.55) .. (1,-3.05);
\draw[rotate=180, very thick, red] (-1,6.05) 
.. controls (-0.5,5.5) and (0,5.55) .. (1.5,5.55)
.. controls (3,5.55) and (3.5,5.5) .. (4,6.05);
\draw[very thick, red] (0.3,-3.45) to (-3.3,-5.65);
\draw[very thick, red] (-2.8,-3.55) .. controls (-2,-3.55) and (-1.5,-4.5) .. (-2.2,-5);
\draw[rotate=180, very thick, red] (0.3,5.55) .. controls (1,5.55) and (1.5,4.5) .. (0.8,4.12);
\end{scope}
\node [fill, circle, inner sep=1.5pt] (v1) at (0,2) {};
\node [fill, circle, inner sep=1.5pt] (v2) at (-3,0) {};
\node [fill, circle, inner sep=1.5pt] (v3) at (0,-2) {};
\node [fill, circle, inner sep=1.5pt] (v4) at (3,0) {};
\draw  (v1) edge (v2);
\draw  (v2) edge (v3);
\draw  (v3) edge (v4);
\draw  (v4) edge (v1);
\node[red] at (0,-0.5) {$b'$};
\end{tikzpicture}}
\def\figureiv{
\begin{tikzpicture}[scale=0.9]
\begin{scope}[shift={(1.5,4.5)}]
\draw[very thick, red] (-2.5,-3.6) .. controls (-1.5,-3.8) and (-1.5,-4) .. (-1.5,-4.5);
\draw[rotate=180, very thick, red] (0.5,5.45) .. controls (1.5,5.2) and (1.5,5) .. (1.5,4.5);
\end{scope}
\draw [red, very thick](-2,1.5) .. controls (-1.4,0.55) and (1.4,0.55) .. (2,1.5);
\draw [red, very thick](-2,-1.5) .. controls (-1.4,-0.6) and (1.4,-0.6) .. (2,-1.5);
\draw[red, very thick] (1,0.9) .. controls (0,0.7) and (0,0.5) .. (0,0) .. controls (0,-0.5) and (0,-0.7) .. (-1,-0.95);
\node [fill, circle, inner sep=1.5pt] (v1) at (0,2) {};
\node [fill, circle, inner sep=1.5pt] (v2) at (-3,0) {};
\node [fill, circle, inner sep=1.5pt] (v3) at (0,-2) {};
\node [fill, circle, inner sep=1.5pt] (v4) at (3,0) {};
\draw  (v1) edge (v2);
\draw  (v2) edge (v3);
\draw  (v3) edge (v4);
\draw  (v4) edge (v1);
\draw  (v2) edge (v4);
\end{tikzpicture}}
    \centering
    \begin{tikzcd}[row sep=huge, column sep=huge]
    \figurei \ar[r, Rightarrow, red,  "\mbox{left splitting}", "\mbox{at $b$}"' font=\normalsize] \ar[d, Rightarrow, "f_\alpha" font=\normalsize
    ] & \figureii \ar[d, Rightarrow, red, "\mbox{shiftings}"', "\mbox{at $b_1$ and $b_2$}" font=\normalsize]\\
    \figureiv & \ar[l, Rightarrow, red, "\mbox{folding}"', "\mbox{at $b'$}" font=\normalsize] \figureiii
    \end{tikzcd}
    \caption{The decomposition of the deformation of train tracks arising from a flip into elementary moves.}
    \label{fig:decomp_slide}
\end{figure}
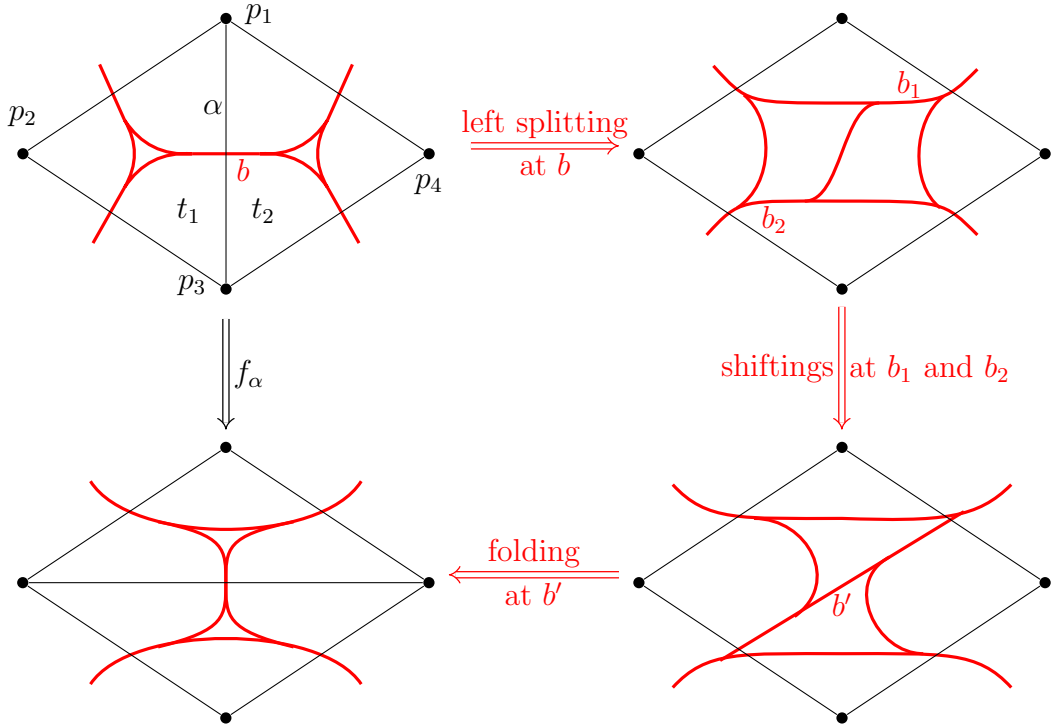

The cases that the train track $\tau$ is of type II in one or both of the ideal triangles of $Q$ are the degenerated cases of \cref{lem:decomp_slide}.
In order to summarize the other patterns, we define the symmetric binary relation between $\TT_{\tri}^{\max}$ and $\TT_{\tri'}^{\max}$ for $\tri, \tri' \in \Tri(\Sigma)$ such that $\tri' = f_\alpha(\tri)$.

First, we introduce the notion of labeled triangulations.
The pair $(\tri, \ell)$ of an ideal triangulation $\tri$ and a bijection $\ell: I := \{1, 2, \dots, -3 \chi(\Sigma) \} \to \tri$ is called \emph{labeled (ideal) triangulation}.
Let $\bTri(\Sigma)$ denote the graph whose vertices are labeled triangulations and adjacent vertices are related by a labeled flip or the action of a transposition of labelings in $\fS_I$.
For $k \in I$, $(\tri, \ell) \overbar{k} (\tri', \ell')$ denotes the edge of $\bTri(\Sigma)$, namely, $\tri' = f_{\ell(k)}(\tri)$ and $\ell(i) = \ell'(i)$ for $i \neq k$.
For $(\tri, \ell) \in \bTri(\Sigma)$, $\TT_{(\tri, \ell)} := \TT_\tri$ and $\TT^{\max}_{(\tri, \ell)} := \TT^{\max}_\tri$.

\begin{defi}
For $(\tri, \ell) \overbar{k} (\tri', \ell')$ in $\bTri(\Sigma)$, $\tau \in \TT^{\max}_{(\tri, \ell)}$ and $\tau' \in \TT^{\max}_{(\tri', \ell')}$, $\tau \lambda_k \tau'$ if they are related by a sequence of elementary moves along the branches inside the quadrangle of $\tri$ whose diagonal is $\ell(k)$.
It is clear that $\lambda_k$ is a symmetric binary relation between $\TT^{\max}_{(\tri, \ell)}$ and $\TT^{\max}_{(\tri', \ell')}$.
\end{defi}

\begin{thm}\label{thm:birel_TT}
Let $(\tri, \ell) \overbar{k} (\tri', \ell')$ in $\bTri(\Sigma)$ for $k \in I$ and let us take $\tau_0 \in \TT^{\max}_{(\tri, \ell)}$.
Then, the maximal length of the chains of binary relation $\lambda_k$ through $\tau_0$ is 1 or 2.
Namely, the possibilities of the chains are only $\tau_0 \lambda_k \tau'_0$, $\tau_0 \lambda_k \tau_0' \lambda_k \tau_1$ or $\tau'_0 \lambda_k \tau_0 \lambda_k \tau'_1$ for some $\tau_1 \in \TT^{\max}_{(\tri, \ell)}$ and $\tau'_0, \tau'_1 \in \TT^{\max}_{(\tri',\ell')}$.
Moreover, in each case, we have $\cV(\tau_0) = \cV(\tau'_0)$, $\cV(\tau_0) \cup \cV(\tau_1) = \cV(\tau'_0)$ and $\cV(\tau_0) = \cV(\tau'_0) \cup \cV(\tau'_1)$, respectively.
\end{thm}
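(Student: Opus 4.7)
The strategy is a case analysis on the local configuration of $\tau_0$ inside the quadrilateral $Q$ whose diagonal is $\ell(k)$, extending the argument of \cref{lem:decomp_slide}. Since $\tau_0$ is complete, in each of the two triangles $t_1, t_2$ comprising $Q$ it is of type II or type III (type I would leave a complementary region too large to appear in a complete train track, exactly as in the proof of \cref{thm:TT_fan}). This gives four principal cases, each further refined by which edges of $Q$ are not crossed when a type-II triangle is present.

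The (III, III) case is essentially \cref{lem:decomp_slide}: the two possible initial splittings (left, right) at the central branch across $\ell(k)$, each followed by the shiftings and folding of \cref{fig:decomp_slide}, converge to the \emph{same} $\tau'_0 \in \TT^{\max}_{(\tri',\ell')}$. One then checks that no further complete train track suited to $\tri$ is reachable from $\tau'_0$ by elementary moves inside $Q$, so the chain has length $1$. The $\cV$-equality $\cV(\tau_0) = \cV(\tau'_0)$ follows by chaining \cref{cor:cone_elem_move}: the two splittings give $\cV(\tau_0) = \cV(\tau_L) \cup \cV(\tau_R)$, shiftings preserve $\cV$, and the final folding is the inverse splitting.

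In the degenerate cases (at least one of $t_1, t_2$ of type II), one of the two ``symmetric'' initial splittings available in the (III, III) case is either unavailable or leads to a complete train track still suited to $\tri$ rather than $\tri'$. This breaks the merging effect and produces chains of length $2$: either $\tau_0 \lambda_k \tau'_0 \lambda_k \tau_1$ when the extra complete train track is suited to $\tri$, or $\tau'_0 \lambda_k \tau_0 \lambda_k \tau'_1$ in the reverse situation. The $\cV$-identities $\cV(\tau_0) \cup \cV(\tau_1) = \cV(\tau'_0)$ and $\cV(\tau_0) = \cV(\tau'_0) \cup \cV(\tau'_1)$ follow by applying \cref{cor:cone_elem_move}(1) to the splitting step that separates the two threads.

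The main obstacle is the bookkeeping of sub-cases: several combinations of local type and orientation must each be inspected by direct picture to confirm both the list of allowed move sequences and that every chain terminates after at most two alternations. The uniform reason the chain length cannot exceed $2$ is that the splitting/shifting/folding algorithm at $\ell(k)$ has only two \emph{a priori} branches (left vs. right splitting of the central branch), so at most two distinct outcomes $\tau'$ can arise from a given $\tau_0$; symmetry of $\lambda_k$ then ensures both halves of each length-$2$ chain are realised by this algorithm.
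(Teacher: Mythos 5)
Your overall strategy matches the paper: a case analysis on the local type of $\tau_0$ in the two ideal triangles bounding $\ell(k)$, starting from the $(\text{III},\text{III})$ case of \cref{lem:decomp_slide} and then degenerating, with \cref{cor:cone_elem_move} supplying the $\cV$-identities.

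However, your central claim that the degenerate cases (at least one triangle of type II) ``break the merging effect and produce chains of length 2'' is wrong. The paper's \cref{tab:deform_train_track} shows that several degenerate configurations produce chains of length 1, not 2: when the absent short branches lie ``on opposite sides'' of the quadrilateral (the cells $(p_1,p_4)$, $(p_2,p_1)$, $(p_2,p_3)$, $(p_3,p_4)$ in the paper's notation), the whole move sequence degenerates to a single shifting, which is a bijection between the charts and therefore gives $\cV(\tau_0)=\cV(\tau'_0)$ with no second thread. Your sentence ``one of the two symmetric initial splittings is either unavailable or leads to a train track still suited to $\tri$'' already hints at this dichotomy, but you then treat both branches as yielding length 2. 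In fact the ``unavailable'' branch gives length 1, and the correct discriminant is not ``some triangle is type II'' but whether the residual move sequence still contains a split (length $2$, chain $\tau'_0\lambda_k\tau_0\lambda_k\tau'_1$), still contains a fold (length $2$, chain $\tau_0\lambda_k\tau'_0\lambda_k\tau_1$), or degenerates to a pure shift (length $1$); the $(\text{III},\text{III})$ case is special because it contains both a split and a fold, which cancel. The actual bookkeeping is the content of \cref{tab:deform_train_track}, and your high-level classification, as stated, would wrongly promote four of its entries from length $1$ to length $2$. Beyond this, your appeal to ``only two a priori branches'' as the uniform cap on chain length is only a heuristic; the paper justifies the bound by noting that each elementary move type (split, shift, fold) occurs at most once in each cell's move sequence, which is the structural fact that actually needs to be verified case by case.
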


\begin{proof}
Since $\tau_0$ is complete, $\tau_0$ is of type II or III in the ideal triangles of $\tri$ whose boundary contains $\ell(k)$.
Let $t_1$ and $t_2$ be ideal triangles such that $t_1 \cap t_2 = \ell(k)$.
Moreover, let $p_1, p_2, p_3, p_4$ be punctures of $\Sigma$ such that $p_1, p_2, p_3 \in t_1$, $p_1, p_3, p_4 \in t_2$.
See the left top of \cref{fig:decomp_slide}.


All the possible patterns are summarized in \cref{tab:deform_train_track}.
In this table, the row (resp. column) corresponding to the punctures means the absence of the short branch across to $p_i$ in $t_1$ (resp. $p_j$ in $t_2$).
The sign $\emptyset$ represents that the train track is of type III in $t_1$ or $t_2$.
The moves written in the cells are the possible sequence of elementary moves inside the quadrangle from $\tau_0$ to the train tracks which are suited to $\tau'$.
These moves are obtained from the sequence corresponding to the cell $(\emptyset, \emptyset)$ (\cref{lem:decomp_slide}) by omitting some moves which are not able to occur.
Since the train tracks corresponding to $(p_1, p_1)$ and $(p_3, p_3)$ are not complete, here are filled by the symbol $\times$.
We note that the moves written in the other cell appear only once in the sequence of moves corresponding to the cell.
By \cref{cor:cone_elem_move}, we can summarize as follows:
\begin{itemize}
    \item $(p_1, p_4)$, $(p_2, p_1)$, $(p_2, p_3)$, $(p_3, p_4)$ and $(\emptyset, \emptyset)$: the longest chain which contains $\tau_0$ is realized by $\tau_0 \lambda_k \tau'_0$ for some $\tau'_0 \in \TT^{\max}_{(\tri', \ell')}$.
    In this case, we have $\cV(\tau_0) = \cV(\tau'_0)$.
    \item $(p_1, p_3)$, $(p_1, \emptyset)$, $(p_3, p_1)$, $(p_3, \emptyset)$, $(\emptyset, p_1)$ and $(\emptyset, p_3)$: the longest chain which contains $\tau_0$ is realized by $\tau_0 \lambda_k \tau'_0 \lambda_k \tau_1$ for some $\tau_1 \in \TT^{\max}_{(\tri,\ell)}$ and $\tau'_0 \in \TT^{\max}_{(\tri', \ell')}$.
    In this case, we have $\cV(\tau_0) \cup \cV(\tau_1) = \cV(\tau'_0)$
    \item $(p_2, p_4)$, $(p_2, \emptyset)$ and $(\emptyset, p_4)$: the longest chain which contains $\tau_0$ is realized by $\tau'_0 \lambda_k \tau_0 \lambda_k \tau'_1$ for some $\tau'_0, \tau'_1 \in \TT^{\max}_{(\tri', \ell')}$.
    In this case, we have $\cV(\tau_0) = \cV(\tau'_0) \cup \cV(\tau'_1)$.
\end{itemize}

\end{proof}

\begin{table}[h]
    \centering
    \begin{tabular}{c|cccc}
    $t_1 \diagdown t_2$ & $p_1$ & $p_3$ & $p_4$ & $\emptyset$ \\
    \hline
    $p_1$ & $\times$  & fold & shift & shift \& fold \\
    $p_2$ & shift  & shift & split & split \& shift \\
    $p_3$ & fold  & $\times$ & shift & shift \& fold \\
    $\emptyset$ & shift \& fold  & shift \& fold  & split \& shift & split \& shift \& fold
    \end{tabular}
    \vspace{1.5mm}
    \caption{The possible sequence of elementary moves for the complete train tracks which are suited to an ideal triangulation $\tri$. }
    \label{tab:deform_train_track}
\end{table}

\subsection{Train track atlas of \texorpdfstring{$\MF(\Sigma)$}{the space of measured foliations}}

By \cref{cor:TT_fan}, it is clear that 
\begin{align}\label{eq:intD_subset_A}
    \bigcup_{\tau \in \TT^{\max}_\tri} \interior \widetilde{\cV}(\tau) \subsetneq \dMF(\Sigma).
\end{align}
However, by taking a union over the all ideal triangulation for the LHS of \eqref{eq:intD_subset_A}, we can recover the equality:
\begin{lem}\label{lem:open_cov_TT}
Let $\Sigma$ be a punctured surface, then we have
\begin{align*}
    \bigcup_{\tri \in \Tri(\Sigma)} \bigcup_{\tau \in \TT^{\max}_\tri} \interior \widetilde{\cV}(\tau) = \dMF(\Sigma) \setminus \{\mbox{\rm peripheral foliation}\}.
\end{align*}
Here, \emph{peripheral foliation} means the measured foliation whose every leaf is peripheral.
\end{lem}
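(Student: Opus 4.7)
The plan is to prove both inclusions separately by exploiting the product structure $\widetilde{\cV}(\tau) = \cV(\tau) \times \bR^P$ inside $\dMF(\Sigma) = \MF(\Sigma) \times \bR^P$; the $\bR^P$-factor is unconstrained, so $\widetilde{\cF} = (\cF, \mathbf{c}) \in \interior\widetilde{\cV}(\tau)$ if and only if $\cF \in \interior\cV(\tau)$. For the inclusion $\supseteq$, if $\widetilde{\cF}$ is peripheral then its underlying measured foliation is the empty class $\emptyset \in \MF(\Sigma)$. Under the parametrization $\psi_\tau : V(\tau) \to \cV(\tau)$, the empty class corresponds to the cone vertex $\mathbf{0} \in V(\tau)$; since $\dim V(\tau) = 6g-6+2h = \dim \MF(\Sigma)$ and $V(\tau)$ is a full-dimensional cone, this vertex is a $0$-dimensional face and therefore not in the topological interior. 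Hence peripheral foliations never lie in $\interior \widetilde{\cV}(\tau)$ for any $\tri$ and any $\tau \in \TT^{\max}_\tri$.

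For the inclusion $\subseteq$, let $\widetilde{\cF} = (\cF, \mathbf{c})$ be non-peripheral, so $\cF \neq \emptyset$. First I would invoke the classical atlas theorem for measured foliations (\emph{cf.}\ \cite[Theorems 1.4.2 and 2.7.4]{PH}): every non-empty $\cF \in \MF(\Sigma)$ is fully carried by some complete birecurrent train track $\tau_0 \subset \Sigma$, giving $\cF \in \interior \cV(\tau_0)$. Completeness forces the complementary regions of $\tau_0$ to be precisely $h$ once-punctured monogons (one per puncture) together with some collection of unpunctured trigons. The decisive step is then to produce an ideal triangulation $\tri$ with $\tau_0 \in \TT^{\max}_\tri$: I would do this by adding ideal arcs dual to the combinatorial structure of $\tau_0$, using each cusp branch of a once-punctured monogon at $p$ to single out the triangle $t_p$ of type II at $p$ and each trigon to delineate a triangle of type III. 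Granted such a construction, the conclusion $\widetilde{\cF} \in \interior \widetilde{\cV}(\tau_0)$ follows immediately from step one.

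The main obstacle is the explicit construction of $\tri$ from $\tau_0$: one must verify that the added arcs are pairwise disjoint away from the punctures and that they partition $\Sigma$ into ideal triangles carrying $\tau_0$ with the correct type assignments. An alternative path avoiding this construction is an iterative flipping argument. Starting from any ideal triangulation $\tri_0$, \cref{cor:TT_fan} places $\widetilde{\cF}$ in some cell $\widetilde{\cV}(\tau_0)$; if $\widetilde{\cF}$ sits on a proper face, then \cref{thm:birel_TT} identifies a flip along a suitable ideal arc after which $\widetilde{\cF}$ lies in a cell of strictly smaller codimension. Iterating terminates after finitely many steps whenever $\cF \neq \emptyset$, producing a triangulation in which $\widetilde{\cF}$ is interior. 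In either approach, the delicate point is ensuring that no new degeneracy is introduced along the way; the non-emptiness of the underlying $\cF$ is precisely the hypothesis that rules out the peripheral case.
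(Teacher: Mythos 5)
Your argument for the easy containment (no peripheral foliation lies in any $\interior \widetilde{\cV}(\tau)$, because the underlying $\cF=\emptyset$ sits at the cone vertex of $V(\tau)$) is correct and matches the implicit reduction in the paper, which also uses the product decomposition $\widetilde{\cV}(\tau) = \cV(\tau) \times \bR^P$ to pass to $\MF(\Sigma)$.

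For the hard containment, both of your routes have issues. Route (a) cites \cite[Theorems 1.4.2 and 2.7.4]{PH} for the claim that every non-empty $\cF$ is \emph{fully} carried by some complete track. But Theorem 1.4.2 only says a recurrent track can be enlarged to a complete one — and precisely that enlargement puts $\cF$ on a proper face, not in the interior — and Theorem 2.7.4 only says $\psi_\tau$ is an embedding. Neither yields full carrying. The statement you want is Thurston's \cite[Section 8.10]{Th}, which the paper explicitly identifies as the classical result it is giving a new, ``semi-algebraic'' proof of; so invoking it here would be essentially circular with respect to the paper's purpose. Route (b) is closer in spirit to the paper's actual argument, but the key claim — that \cref{thm:birel_TT} identifies a flip that strictly decreases the codimension of the face containing $\widetilde{\cF}$ — is not justified, and is not true for an arbitrary flip. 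The paper's proof splits the vanishing branch set $B(\tau)_\cF$ into two cases: when it contains a \emph{long} branch, one flips along an arc dual to an \emph{end} of the subgraph $G^\tri_\cF$ (and it is the ``end'' condition that forces the new long-branch weight to be positive, strictly reducing codimension); when all vanishing branches are \emph{short}, no single flip need help, and instead one must flip around a puncture of a once-punctured $k$-gon of the subtrack $\tau_\cF$ — the codimension drop only materializes after the whole cycle of flips, when the $k$-gon decomposes into $(k-1)$ trigons and a once-punctured monogon (the paper's Figure~13). You flagged this as ``the delicate point'' but did not resolve it; it is exactly the content of the lemma.
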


We are not going to use this statement after \cref{sec:sign-stab} so one can skip the proof of it.

\begin{proof}
By the bijection of \cref{thm:TT_fan} and the proof of it, we are going to prove the following equivalent statement:
\begin{align*}
    \bigcup_{\tri \in \Tri(\Sigma)} \bigcup_{\tau \in \TT^{\max}_\tri} \interior \cV(\tau) = \MF(\Sigma) \setminus \{\emptyset\}.
\end{align*}
Let us take any point $\emptyset \neq \cF = [F, \mu] \in \MF(\Sigma)$.
Then, although $\cF \in \cV(\tau)$ for some $\tri \in \Tri(\Sigma)$ and $\tau \in \TT^{\max}_\tri$ by \cref{cor:TT_fan}, $\cF$ might be contained in the boundary of $\cV(\tau)$.
Let $\cK$ be the face of $\cV(\tau)$ which contains $\cF$ and whose codimension is maximal among such faces.
Since $V(\tau) \subset \bR^{B(\tau)}$ is polyhedral, such face $\cK$ is uniquely determined.
Let $K := \psi_\tau^{-1}(\cK) \subset V(\tau)$.
It is clear that the face $K$ is determined by the branches $B(\tau)_\cF = \{b_1, \dots, b_k\}$ of $\tau$ at which the measure is vanishing:
\begin{align*}
    K = \{ \nu \mid \nu(b) = 0 \mbox{ for } b \in B(\tau)_\cF\}.
\end{align*}

First, we assume that the set $B(\tau)_\cF$ contains long branches.
Let $\tri_\cF \subset \tri$ be the set of ideal arcs consisting of the dual of the long branches in $B(\tau)_\cF$.
Moreover, we consider the subgraph $G^\tri_\cF$ of the dual fat graph $G^\tri$ of $\tri$ corresponding to $\tri_\cF$.
We take $\alpha \in \tri_\cF$ such that the dual edge of it is an end of $G^\tri_\cF$.
Let $(\tri, \ell) \overbar{k} (\tri', \ell')$ and $\ell(k) = \alpha$.
Then, there is $\tau' \in \TT^{\max}_{(\tri',\ell')}$ such that $\tau \lambda_k \tau'$, $\cF \in \cV(\tau')$ by \cref{thm:birel_TT}.
Let $\cK'$ be the face of $\cV(\tau')$ which contains $\cF$ with maximal codimension and $K' := \psi_{\tau'}^{-1}(\cK')$.
Then, we have $\operatorname{codim} K' < \operatorname{codim} K$ since $\psi^{-1}_{\tau'}(\cF)(b') >0$ for the branch $b'$ of $\tau'$ which is transversal to $\ell'(k) \in \tri'$ since the dual edge of $\ell(k) = \alpha$ is an end of $G^\tri_\cF$.
(See the coordinate transformation \cref{fig:trop_A_flip} or \cref{lem:trans_mat_elem}.)
We repeat this procedure until the subgraph $G^{\tri''}_\cF$ vanishes where $\tri''$ is the final ideal triangulation.
Let $\tau'' \in \TT^{\max}_{\tri''}$ such that $\cF \in V(\tau'')$.
Then, all the branches of $B(\tau'')_\cF$ are short.

Next, we consider the case that all the branches of $B(\tau)_\cF$ are short.
Let us consider the subtrack $\tau_\cF$ of $\tau$ consists of the branches whose weight of $z$ are not zero.
If $B(\tau)_\cF \neq \emptyset$, there is a once-punctured $k >1$-gon as a complementary region of $\tau_\cF$.
Let $p$ be a puncture of such $k$-gon.
Then, flip the ideal arcs of $\tri$ adjacent to the puncture $p$ in any order and write $\tri''$ for the complete train track obtained from the corresponding binary relations such that $\cF \in V(\tau'')$.
Consider applying the elementary moves corresponding to the sequence of binary relations for the subtrack $\tau_\cF \subset \tau$, we can think that the $k$-gon of $\tau_\cF$ splits into $k-1$ trigons and one once-punctured monogon of $\tau''$. (See \cref{fig:punc5gon->4tri*punc1gon})
Hence $\tau''_\cF = \tau''$.
\end{proof}

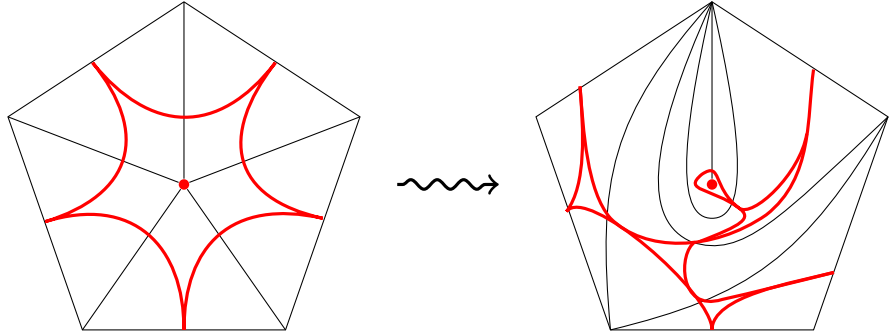
\begin{figure}[h]
    \centering
    \begin{tikzpicture}[scale=.9]
    \draw (0.35,0.7) coordinate (v1) {} -- (-2.25,-1) coordinate (v3) {} -- (-1.15,-4.15) coordinate (v4) {} -- (1.85,-4.15) coordinate (v5) {} -- (2.95,-1) coordinate (v6) {} -- (v1);
    \node [fill, circle, inner sep=1.4, red] (v2) at (0.35,-2) {};
    \draw  (v2) edge (v1);
    \draw  (v2) edge (v3);
    \draw  (v2) edge (v4);
    \draw  (v2) edge (v5);
    \draw  (v2) edge (v6);
    \draw [red, very thick](-1,-0.2) .. controls (-0.15,-1.25) and (-0.4,-2.15) .. (-1.7,-2.55) .. controls (-0.4,-2.15) and (0.35,-2.9) .. (0.35,-4.15) .. controls (0.35,-2.9) and (1.05,-2.2) .. (2.4,-2.5) .. controls (1.05,-2.2) and (0.85,-1.3) .. (1.7,-0.2) .. controls (0.85,-1.3) and (-0.15,-1.25) .. (-1,-0.2);
    
    \draw [squigarrow, very thick](3.5,-2) -- (5,-2);
    
    \draw (8.15,0.7) coordinate (v1) {} -- (5.55,-1) coordinate (v3) {} -- (6.65,-4.15) coordinate (v4) {} -- (9.65,-4.15) coordinate (v5) {} -- (10.75,-1) coordinate (v6) {} -- (v1);
    \node [fill, circle, inner sep=1.4, red] (v2) at (8.15,-2) {};
    \draw (v1) -- (v2);
    \draw (8.15,0.7) .. controls (7.65,-1.5) and (7.65,-2.5) .. (8.15,-2.5) .. controls (8.65,-2.5) and (8.65,-1.5) .. (8.15,0.7);
    \draw (8.15,0.7) .. controls (6.65,-1) and (6.45,-1.9) .. (6.65,-4.15);
    \draw (6.65,-4.15) .. controls (8.7,-3.65) and (9.5,-3.1) .. (10.75,-1);
    \draw (8.15,0.7) .. controls (7.15,-1.5) and (7.15,-2.55) .. (7.85,-2.85) .. controls (8.6,-3.1) and (9.15,-2.5) .. (10.75,-1);
    \draw [red, very thick](6.2,-0.55) .. controls (6.25,-1.2) and (6.3,-2) .. (6,-2.4) .. controls (6.3,-2) and (8.15,-3.6) .. (8.15,-4.15) .. controls (8.15,-3.65) and (9.35,-3.45) .. (9.95,-3.3);
    \draw [red, very thick](6.2,-0.55) .. controls (6.25,-1.2) and (6.3,-2.1) .. (6.65,-2.5) .. controls (7.15,-3) and (8.15,-3) .. (8.95,-2.45) .. controls (9.65,-1.9) and (9.55,-1) .. (9.65,-0.3);
    \draw [red, very thick](7.9,-2.85) .. controls (7.65,-3) and (7.65,-3.9) .. (8.3,-3.7) .. controls (8.75,-3.6) and (9.35,-3.45) .. (9.95,-3.3);
    \draw [red, very thick](9.55,-1.25) .. controls (9.45,-1.85) and (8.75,-2.5) .. (8.55,-2.35) .. controls (8.15,-2.05) and (8.3,-1.65) .. (8,-1.85) .. controls (7.7,-2.1) and (8.15,-2.05) .. (8.55,-2.35) .. controls (8.75,-2.5) and (8.6,-2.6) .. (7.95,-2.85);
    \end{tikzpicture}
    \caption{A deformation from once punctured pentagon to 4 unpunctured trigons and once punctured monogon.}
    \label{fig:punc5gon->4tri*punc1gon}
\end{figure}

For a complete train track $\tau$, let
\begin{align}\label{eq:tt_chart}
    \phi_\tau := (\psi_\tau|_{\interior V(\tau)})^{-1}: \interior \cV(\tau) \subset \MF(\Sigma) \to \interior V(\tau) \subset \bR^{B_\tau}.
\end{align}
Here, $B_\tau$ is the subset of the set of branches $B(\tau)$ of $\tau$ such that the weights at them form a basis of the $\bR$-span of $V(\tau)$.
Namely, 
\begin{align*}
    \mathrm{Span}_{\bR}(\nu(b) \mid b \in B(\tau),\ \nu \in V(\tau))=
    \mathrm{Span}_{\bR}(\nu(b) \mid b \in B_\tau,\ \nu \in V(\tau)).
\end{align*}

\begin{thm}\label{thm:tt_atlas_MF}
The set
\begin{align*}
    \{ (\interior \cV(\tau), \phi_\tau) \mid \tau \in \TT^{\max}_\tri,\ \tri \in \Tri(\Sigma) \}
\end{align*}
gives a PL atlas of $\MF(\Sigma) \setminus \{\emptyset\}$.
Therefore, the set
\begin{align*}
    \{ (\interior \widetilde{\cV}(\tau), \phi_\tau \times \mathrm{id}_{\bR^P}) \mid \tau \in \TT^{\max}_\tri,\ \tri \in \Tri(\Sigma) \}
\end{align*}
gives a PL atlas of $\widetilde{\MF}(\Sigma) \setminus \{\mbox{\rm peripheral foliations}\}$.
\end{thm}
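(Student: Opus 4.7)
The plan is to use the $\cA$-coordinates $\sfa^\tri$ of \cref{prop:A-coord} as a bridge between train-track charts. Since the transition between $\sfa^\tri$ and $\sfa^{\tri'}$ under a flip is the piecewise-linear map of \cref{fig:trop_A_flip} and any two ideal triangulations are connected by a sequence of flips, the $\cA$-coordinates already equip $\dMF(\Sigma)$ (and by restriction $\MF(\Sigma)$) with a PL structure. My goal is to show that each $\phi_\tau$ is \emph{linearly} related to $\sfa^\tri$ on its domain $\interior\cV(\tau)$, so that the transition between any two train-track charts factors as (linear)~$\circ$~(PL)~$\circ$~(linear), and is therefore PL.

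First I would verify the basic chart properties. Since $\psi_\tau\colon V(\tau)\to\cV(\tau)$ is an embedding and $\dim V(\tau)=6g-6+2h=\dim\MF(\Sigma)$ for complete $\tau$, invariance of domain gives that $\interior\cV(\tau)$ is open in $\MF(\Sigma)$, and $\phi_\tau$ is a homeomorphism onto the open set $\interior V(\tau)\subset\bR^{B_\tau}$. The fact that the charts cover $\MF(\Sigma)\setminus\{\emptyset\}$ is exactly \cref{lem:open_cov_TT}.

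The main ingredient is the linearity claim: for $\tau\in\TT^{\max}_\tri$ and $\alpha\in\tri$, the composition $\sfa_\alpha\circ\psi_\tau\colon V(\tau)\to\bR$ is a linear function of the transverse measure. Indeed, because $\tau$ is suited to $\tri$, the ideal arc $\alpha$ meets $\tau$ transversally in a \emph{combinatorially fixed} set of branches (depending only on the type I/II/III of $\tau$ on the two triangles adjacent to $\alpha$), and $2\sfa_\alpha(\psi_\tau(\nu))=\int_\alpha\mu_\nu$ is just the sum of $\nu(b)$ over that set. Collecting these across $\alpha\in\tri$, the map $\sfa^\tri\circ\psi_\tau\colon V(\tau)\to\bR^\tri$ is linear; since both $\sfa^\tri|_{\cV(\tau)}$ and $\phi_\tau$ are bijective coordinates on $\interior\cV(\tau)$, they differ by an invertible linear map onto the image.

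Granted the linearity claim, for complete train tracks $\tau_i\in\TT^{\max}_{\tri_i}$ ($i=1,2$) with $\interior\cV(\tau_1)\cap\interior\cV(\tau_2)\neq\emptyset$, the transition $\phi_{\tau_2}\circ\phi_{\tau_1}^{-1}$ decomposes as a linear map $\bR^{B_{\tau_1}}\to\bR^{\tri_1}$, a PL flip-sequence $\bR^{\tri_1}\to\bR^{\tri_2}$, and a linear map $\bR^{\tri_2}\to\bR^{B_{\tau_2}}$, hence is PL. When $\tri_1=\tri_2$ and $\tau_1\neq\tau_2$, \cref{cor:TT_fan} gives empty overlap, so no transition is needed. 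The decorated statement is then immediate from the trivial bundle structure $\dMF(\Sigma)=\MF(\Sigma)\times\bR^P$ together with $\widetilde{\cV}(\tau)=\cV(\tau)\times\bR^P$. The main obstacle I anticipate is the bookkeeping in the linearity claim: one must track precisely which branches of $\tau$ an ideal arc $\alpha$ crosses when $\tau$ is of type II or III on one or both adjacent triangles (so that short branches enter the picture), and verify using the switch condition \eqref{eq:switch_cond} that this combinatorial pattern is genuinely independent of $\nu$.
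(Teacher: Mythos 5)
Your proof is correct and essentially the paper's own route: use the $\cA$-coordinates $\sfa^\tri$ (equivalently the tropical cluster coordinates $a_\tri$) as a bridge, show $\phi_\tau$ is linearly related to them on each chart, factor transitions as linear $\circ$ PL $\circ$ linear, and invoke \cref{lem:open_cov_TT} for the covering. The linearity claim you prove directly by intersection counting is precisely the content of \cref{thm:atlases}, which the paper establishes by a base change of weights on the freeway $\tau_\tri$; the two arguments coincide, and the ``bookkeeping'' you flag is in fact trivial, since short branches never cross ideal arcs, so each $\alpha\in\tri$ meets exactly one long branch of a complete $\tau\in\TT^{\max}_\tri$, giving $2\sfa_\alpha(\psi_\tau(\nu))=\nu(b_\alpha)$.
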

One can prove this theorem directly by \cref{lem:open_cov_TT,lem:trans_mat_elem,thm:birel_TT} but we will prove in the next section by tropicalized cluster structure on $\dMF(\Sigma)$.

\begin{rmk}
Let $G_\tau$ be a graph obtained from a complete train track $\tau$ by collapsing every complementary region which is trigon to a trivalent vertex.
Then, the dual graph $\tri_\tau$ of $G_\tau$ is an ideal triangulation of $\Sigma$.
Namely, $\tau \in \TT^{\max}_{\tri_\tau}$.
Therefore, the union $\bigcup_{\tri \in \Tri(\Sigma)} \TT^{\max}_{\tri}$ is nothing but the set of the complete train tracks on $\Sigma$, so we can rewrite \cref{thm:tt_atlas_MF} as follows:
the set $\{ (\interior \cV(\tau), \phi_\tau) \mid \tau \mbox{ is a complete train track on } \Sigma \}$ gives a PL atlas of $\MF(\Sigma) \setminus \{\emptyset\}$.
This statement is already proven by W. P. Thurston \cite[Section 8.10]{Th}.
Our strategy of the proof of \cref{thm:tt_atlas_MF} is ''semi''-algebraic so some techniques might be generalized to the other mutation classes.
\end{rmk}

\section{Goncharov--Shen potential and $\cV$-variety}\label{sec:GSpot_Vvar}
In the last section, we see that there is a PL structure on $\dMF(\Sigma)$ given by the train tracks which is suited to ideal triangulations.
On the other hand, $\dMF(\Sigma)$ has a PL structure given by $\cA$-coordinates associated with the ideal triangulations.
It is nothing but the PL structure given by the tropicalization of the cluster structure associated with the ideal triangulations of $\Sigma$.
In this section, we translate train tracks which suited to ideal triangulations into the language of cluster algebras.



\subsection{Cluster ensemble associated with a punctured surface}
First, we recall the mutation class $\bs_\Sigma$ of a punctured surface $\Sigma$ and some properties of the cluster ensemble $p_\Sigma: \cA_\Sigma \to \cX_\Sigma$ associated with $\bs_\Sigma$.


\begin{thm}[{\cite{FST}}, see {\cite[Theorem 4.1]{IK20a}}]\label{thm:Tri_Exch}
If $\Sigma$ is a marked surface except for a once-punctured surface, there is a graph embedding
\begin{align*}
    \bTri(\Sigma) \hookrightarrow \bExch_\Sigma.
\end{align*}
Here, $\bExch_\Sigma$
denotes the labeled exchange graph
of the mutation class $\bs_\Sigma$.
\end{thm}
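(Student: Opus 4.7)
The plan is to construct the map $\Phi \colon \bTri(\Sigma) \to \bExch_\Sigma$ by specifying it on vertices and edges, and then to verify that it is an injective graph homomorphism. On vertices, I would send a labeled triangulation $(\tri, \ell)$ to the labeled seed $\bs_{(\tri, \ell)}$ whose exchange matrix $B_{(\tri, \ell)} = (b_{ij})$ is defined by counting, with the orientation-induced sign, the ideal triangles of $\tri$ in which $\ell(i)$ and $\ell(j)$ appear as adjacent edges. This is the standard Fomin--Shapiro--Thurston assignment and it automatically lands in the mutation class $\bs_\Sigma$, since any two ideal triangulations of $\Sigma$ are connected by a sequence of flips.

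On edges I need to check two compatibilities. For a labeled flip $(\tri, \ell) \overbar{k} (\tri', \ell')$, the seeds $\bs_{(\tri, \ell)}$ and $\bs_{(\tri', \ell')}$ must be related by the mutation $\mu_k$. This reduces to a local verification in the quadrilateral containing the flipped arc $\ell(k)$: for each vertex $j \neq k$, the triangles that contribute to $b_{kj}$ change in exactly the way prescribed by the mutation formula, and entries $b_{ij}$ with $i, j \neq k$ change by the standard correction term coming from length-two paths through $k$. For a transposition edge given by $\sigma = (i\ j) \in \fS_I$, the exchange matrix is simply conjugated by the associated permutation matrix, which is immediate from the definition of $B_{(\tri, \ell)}$.

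The main obstacle, and the reason once-punctured surfaces must be excluded, is the injectivity of $\Phi$. Suppose $\bs_{(\tri_1, \ell_1)} = \bs_{(\tri_2, \ell_2)}$ as labeled seeds; I need to conclude $(\tri_1, \ell_1) = (\tri_2, \ell_2)$. I would proceed in two stages. First, reconstruct the underlying unlabeled ideal triangulation from the unlabeled quiver: the blocks of the quiver corresponding to ideal triangles can be read off combinatorially from the matrix, and gluing them according to shared vertices recovers $\tri$ uniquely up to surface automorphisms. Second, once the underlying triangulation is identified, the labeling is forced by matching vertex labels of the quiver with ideal arcs of $\tri$. The exception for once-punctured surfaces arises precisely at the reconstruction step: on such surfaces, the flip of the interior arc of a self-folded triangle is not well-defined at the level of quivers, and distinct triangulations can produce the same quiver (this is the motivation for introducing tagged arcs in \cite{FST}).

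Finally, I would package the verification by invoking the conditions (S1), (S2) together with the hypothesis that $\Sigma$ is not once-punctured to rule out the exceptional cases, citing \cite{FST} (or the exposition in \cite[Theorem 4.1]{IK20a}) for the reconstruction lemma rather than re-deriving the combinatorial details. The hardest part conceptually is isolating why the once-punctured case genuinely fails, which is exactly the pathology that motivates the use of tagged triangulations elsewhere in the theory.
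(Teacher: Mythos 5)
The paper does not give its own proof of this statement; it is a recalled result cited from \cite{FST} and \cite[Theorem 4.1]{IK20a}, so there is no ``paper proof'' to compare against. Your sketch does follow the standard Fomin--Shapiro--Thurston blueprint that those references use, but it contains one structural gap and one outright misdiagnosis.

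The structural gap concerns what a vertex of $\bExch_\Sigma$ actually is. You define $\Phi$ by assigning the FST exchange matrix directly, but the vertices of $\bExch_\Sigma$ are labeled \emph{seeds} $(B, \mathbf{A}, \mathbf{X})$, which include tuples of abstract $\cA$- and $\cX$-variables in $\cF_A, \cF_X$, not merely exchange matrices. To produce a vertex of $\bExch_\Sigma$ from $(\tri, \ell)$ one must fix a base seed and transport it along a path of flips and transpositions, and the heart of the matter is proving that the resulting seed is independent of the chosen path. Reconstructing a triangulation from the quiver alone cannot substitute for this, and cannot establish injectivity either, since distinct triangulations routinely share the same quiver (every ideal triangulation of the once-punctured torus produces the Markov quiver). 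What actually closes this gap in FST is the identification of cluster variables with tagged arcs, which your sketch never invokes.

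The misdiagnosis is your explanation of why once-punctured surfaces must be excluded. You attribute it to the unflippable inner arc of a self-folded triangle, but in this paper a ``punctured surface'' has no boundary, so a ``once-punctured surface'' is a closed surface with a single puncture, and such a surface admits \emph{no} self-folded triangles at all: a self-folded triangle requires a loop arc enclosing a puncture distinct from the loop's base point, hence at least two marked points. Self-folded triangles do occur on surfaces with several punctures or with boundary, and there they do complicate the correspondence, but FST resolve that by passing to tagged triangulations and the embedding theorem still holds. The genuine obstruction in the once-punctured closed case is the separate phenomenon recorded in \cite[Theorem~7.11]{FST} (the tagged triangulation exchange graph splits into two components and the FST dictionary degenerates), which is not the mechanism you describe.
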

The labeled exchange graph is realized by the graph of labeled ``tagged'' triangulations. (See \cite{FST}.)
We sometimes identify $\tri \in \Tri(\Sigma)$ and some lift $(\tri, \ell) \in \bTri(\Sigma)$ when we focus only local chart associated with $(\tri, \ell)$.

Comparing with the coordinate change of the $\cA$-coordinate on the space of decorated measured foliations and the cluster coordinate of tropicalized cluster $\cA$-variety $\cA_\Sigma(\bR^\trop)$, we have the identification $\dMF(\Sigma) \xrightarrow{\sim} \cA_\Sigma(\bR^\trop)$ commutes the following:
\[
\begin{tikzcd}
\dMF(\Sigma) \ar[r, "\sim"] \ar[d, "\sfa_\alpha"'] & \cA_\Sigma(\bR^\trop) \ar[d, "a^{(\tri,\ell)}_i"]\\
\bR \ar[r, "(-1)"] & \bR.
\end{tikzcd}
\]
Here, $(\tri, \ell) \in \bTri(\Sigma)$ and $\ell(i) = \alpha \in \tri$.
Moreover, this identification induces the identification $\MF(\Sigma) \xrightarrow{\sim} \cU_\Sigma(\bR^\trop)$ which commutes the following:
\[
\begin{tikzcd}
\dMF(\Sigma) \ar[r, "\sim"] \ar[d] & \cA_\Sigma(\bR^\trop) \ar[d, "p_\Sigma"]\\
\MF(\Sigma) \ar[r, "\sim"] & \cU_\Sigma(\bR^\trop).
\end{tikzcd}
\]
Here, the left vertical map is the natural projection of the trivial bundle $\dMF(\Sigma) = \MF(\Sigma) \times \bR^P$.

\subsection{Goncharov--Shen potential and $\cV$-variety}
The Goncharov--Shen potential is defined in \cite{GS15} for the moduli space $\cA_{G, \Sigma}$ of decorated twisted $G$-local systems on $\Sigma$, where $G$ is a split semisimple simply-connected group $G$ over $\bQ$ but we will mention only for $G = SL_2$.

First, we recall some basic properties of cluster $\cA$-varieties.
Let $\bs$ be a general mutation class.
We note that there are natural identifications $\coker p^*_{(v)} \cong \coker p^*_{(v')}$ for $v, v' \in \bExch_\bs$, so we write $\coker p^*$ them simply.
Let $H_{\cA, \bs}$ denote the torus associated with the dual lattice of $\operatorname{coker}p^*$:
\begin{align*}
    H_{\cA, \bs} := T_{(\operatorname{coker}p^*)^*} := \Hom(\operatorname{coker}p^*, \bG_m).
\end{align*}
One can think that $(\operatorname{coker}p^*)^* = N^{(v)}/\im p^*_{(v)} \subset N^{(v)}$ for some $v \in \bExch_\bs$.
The additive action $(\coker p^*) \otimes N^{(v)} \to N^{(v)}$ induces an action on $\cA_\bs$ by the torus $H_{\cA, \bs}$ (\cite[Section 2.3]{FG09}).

\begin{prop}[{\cite[Lemma 2.10]{FG09}}]
The ensemble map $p: \cA_\bs \to \cU_\bs$ is an $H_{\cA, \bs}$-torsor.
\end{prop}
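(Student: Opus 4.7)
The plan is to verify the torsor property locally on each cluster chart of $\cA_\bs$ and then check that the local structures patch compatibly under mutation. On a fixed seed $v \in \bExch_\bs$, the variety $\cA_{(v)}$ is an algebraic torus with character lattice $N^{(v)}$, and $p_{(v)}$ is the morphism of tori induced by the lattice map $p^*_{(v)}$. The short exact sequence
\[
0 \to \im p^*_{(v)} \to N^{(v)} \to \coker p^* \to 0
\]
(viewed on character lattices of $\cA_{(v)}$) dualizes via $\Hom(-,\bG_m)$ into a short exact sequence of tori
\[
1 \to H_{\cA, \bs} \to \cA_{(v)} \xrightarrow{p_{(v)}} \cU_{(v)} \to 1,
\]
where I identify the image $T_{\im p^*_{(v)}}$ of $p_{(v)}$ with the $\cU$-chart $\cU_{(v)}$ (the definition of $\cU_\bs$ as the image of $p$). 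This displayed sequence exhibits $p_{(v)}: \cA_{(v)} \to \cU_{(v)}$ as an $H_{\cA, \bs}$-torsor on each cluster chart.

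For the globalization, the canonical identification $\coker p^*_{(v)} \cong \coker p^*_{(v')}$ for adjacent seeds $v, v'$ ensures that the torus $H_{\cA, \bs}$ is globally well-defined, independently of the choice of chart. I would then verify $H_{\cA, \bs}$-equivariance of the $\cA$-cluster mutation formula: the exchange monomials appearing in the mutation of an $\cA$-coordinate $a_k$ are characters lying in $\im p^*_{(v)}$ (this is essentially how the ensemble map $p$ is built from the skew form on the seed), and hence are $H_{\cA, \bs}$-invariant by the computation of the first paragraph. Since the mutations of both the $\cA$- and $\cU$-charts are intertwined by $p$, this compatibility patches the local torsor structures into a global $H_{\cA, \bs}$-torsor structure on $p: \cA_\bs \to \cU_\bs$.

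The main obstacle will be the bookkeeping in the gluing step: one has to keep straight the lattice maps $p^*_{(v)}$ and the explicit shape of the $\cA$-mutation formula to confirm that every exchange monomial really does lie in $\im p^*_{(v)}$. Once this fact is in hand, the torsor property is automatically preserved under each mutation step, so the local description propagates globally across $\bExch_\bs$. Everything else (triviality of the torsor on each chart, surjectivity of $p$ onto $\cU_\bs$) is formal from the short exact sequence of tori above.
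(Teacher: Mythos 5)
The paper states this proposition as a citation to \cite[Lemma 2.10]{FG09} and does not reprove it, so there is no internal proof to compare against; your outline reconstructs the standard Fock--Goncharov argument (local short exact sequence of tori, then gluing via $H$-equivariance of the mutations), which is the right approach.

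Two points, though. First, with the paper's conventions $\cA_{(v)} = \Hom(M^{(v)}, \bG_m)$, so the character lattice of $\cA_{(v)}$ is $M^{(v)}$, not $N^{(v)}$; your displayed short exact sequence should read $0 \to \im p^*_{(v)} \to M^{(v)} \to \coker p^* \to 0$. After this swap, the dualization and the identification of the quotient torus $T_{\im p^*_{(v)}}$ with $\cU_{(v)} = p_{(v)}(\cA_{(v)})$ are exactly right, and you correctly obtain the local $H_{\cA,\bs}$-torsor structure.

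Second, and more substantively, the claim that ``the exchange monomials $\ldots$ are characters lying in $\im p^*_{(v)}$, and hence are $H_{\cA,\bs}$-invariant'' is too strong. In the mutation rule $A'_k = A_k^{-1}\bigl(\prod_j A_j^{[b_{kj}]_+} + \prod_j A_j^{[-b_{kj}]_+}\bigr)$, the two summands $\prod_j A_j^{[\pm b_{kj}]_+}$ are \emph{not} individually supported in $\im p^*_{(v)}$; only their ratio $\prod_j A_j^{b_{kj}} = \mathrm{ch}_{p^*_{(v)}(e_k)}$ is. The correct consequence is that, since $h \in H_{\cA,\bs}$ is trivial on $\im p^*_{(v)}$, both summands acquire the \emph{same} $H$-weight $h\bigl(\sum_j[b_{kj}]_+ f_j\bigr) = h\bigl(\sum_j[-b_{kj}]_+ f_j\bigr)$, so the sum remains a weight vector and $A'_k$ has a definite $H$-weight $h(f_k)^{-1}\,h\bigl(\sum_j[-b_{kj}]_+ f_j\bigr)$. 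One then checks that this equals $h(f'_k)$ for the mutated basis vector $f'_k \in M^{(v')}$ under the canonical identification $\coker p^*_{(v)} \cong \coker p^*_{(v')}$; that comparison is exactly the bookkeeping you flag as the real content of the gluing step. With this correction in place, your plan is sound and recovers the argument of \cite{FG09}.
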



The Goncharov--Shen potential gives a ``section'' of this torsor in some sense.

In what follows, we fix $\bs = \bs_\Sigma$ for a punctured surface $\Sigma$.
For each ideal triangulation $\tri$ and each puncture $p \in P$, we define
\begin{align}\label{eq:pot_gen}
W^{\tri}_p = \sum_{t \in T_{\tri, p}} \frac{A^0_{p,t}}{A^1_{p,t}A^2_{p,t}}
\end{align}
where $A^i_{p,t}$ is $A_{\alpha^i_{p,t}}^{\tri}$ and $\alpha^i_{p,t}$ are edges of triangles $t \in T_{\tri, p}$ (see \cref{fig:t_p}).
The maps $W^{\tri}_p: \cA_{\tri} \to \bA^1$ glue and determine the map $W_p: \cA_\Sigma \to \bA^1$.
We write $W = (W_p)_{p \in P}: \cA_\Sigma \to (\bA^1)^P$ and call it \emph{Goncharov--Shen potential function} (\emph{GS potential} for short).
Let $\cV_\Sigma$ denote $W^{-1}(1) \subset \cA_\Sigma$ for the unit $1 \in (\bA^1)^P$ and we call it \emph{$\cV$-variety}.

Since the GS potential $W$ is a positive rational map, namely it has an expression without subtractions, we can take the tropicalization at the semifield $\bP$.
So we can take the tropicalization of the $\cV$-varieties $\cV_\Sigma(\bP)$ by definition $(W^\bP)^{-1}(e)$, where $W^\bP$ is the tropicalization of $W$ at $\bP$ and $e \in \bP^\times$ denotes the multiplicative unit.

\begin{prop}\label{prop:V=U}
Let $\bP = \bR^\trop$ or $\bR_{>0}$.
Then, the ensemble map induces an isomorphism $\cV_\Sigma(\bP) \xrightarrow{\sim} \cU_\Sigma(\bP)$.
\end{prop}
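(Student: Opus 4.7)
The plan is to exhibit $\cV_\Sigma(\bP)$ as a section of the $H_{\cA,\Sigma}$-torsor $p_\Sigma$ for $\bP = \bR_{>0}$ or $\bR^\trop$, by showing that $W$ transforms along the torsor by a character $\chi$ of $H_{\cA,\Sigma}$ that is an isomorphism on $\bP$-points.

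First, I would identify the torus and its action. From the quiver of any ideal triangulation $\tri$ of $\Sigma$ one verifies that $\coker p^* \cong \bZ^{P}$, the generator for $p \in P$ being represented by the sum of the basis vectors associated to the ideal arcs incident to $p$ (with multiplicity at loops). Hence $H_{\cA,\Sigma} \cong \bG_m^{P}$, and the induced action of $(t_p)_{p \in P}$ on the $\cA$-coordinates associated with $\tri$ is
\[
A_\alpha \;\longmapsto\; t_{p_1} t_{p_2}\, A_\alpha \qquad \text{for } \partial\alpha = \{p_1, p_2\}.
\]
Over $\bR^\trop$ this is compatible with the shift action of $\bR^{P}$ on $\dMF(\Sigma) = \MF(\Sigma) \times \bR^{P}$ in the fibre direction, as one reads off from $\sfa_\alpha(\cF, \mathbf{c}) = \sfa_\alpha(\cF) + \tfrac{1}{2}(c_{p_1} + c_{p_2})$.

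Second, I would compute how the Goncharov--Shen potential transforms under this action. In the summand $A^0_{p,t}/(A^1_{p,t} A^2_{p,t})$ of $W^\tri_p$, the arc $\alpha^0_{p,t}$ has endpoints at two punctures $q_1, q_2 \neq p$, and $\alpha^i_{p,t}$ has endpoints $p, q_i$ for $i = 1, 2$. The factors $t_{q_1} t_{q_2}$ contributed by $\alpha^0$ cancel against those contributed by $\alpha^1, \alpha^2$, while the denominator picks up an unmatched factor of $t_p^2$; hence each summand, and therefore $W_p$ itself, scales by $t_p^{-2}$. Packaged over all $p \in P$, the map $W = (W_p)_p \colon \cA_\Sigma \to \bG_m^{P}$ is equivariant with respect to the character $\chi \colon H_{\cA,\Sigma} \to \bG_m^{P}$, $(t_p)_p \mapsto (t_p^{-2})_p$, and the trivial action on the target.

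To conclude, over $\bP = \bR_{>0}$ the map $t \mapsto t^{-2}$ is a homeomorphism, and over $\bP = \bR^\trop$ its tropicalization $c \mapsto -2c$ is a PL bijection of $\bR$; in either case $\chi(\bP)$ is an isomorphism of $\bP^{P}$. Consequently, for every $a \in \cA_\Sigma(\bP)$ there is a unique $h \in H_{\cA,\Sigma}(\bP)$ with $W(h \cdot a) = \mathbf{1}$, i.e.\ with $h \cdot a \in \cV_\Sigma(\bP)$. This says that $\cV_\Sigma(\bP)$ meets every $H_{\cA,\Sigma}(\bP)$-orbit in exactly one point, so it is a section of the torsor $p_\Sigma$, and the restriction $p_\Sigma|_{\cV_\Sigma(\bP)}$ is the asserted isomorphism onto $\cU_\Sigma(\bP)$. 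As a sanity check, in the tropical case $\cV_\Sigma(\bR^\trop) = \pot^{-1}(0)$ is the zero section of $\dMF(\Sigma) \to \MF(\Sigma)$, which is canonically $\MF(\Sigma) = \cU_\Sigma(\bR^\trop)$. I expect the main technical hurdle to be the clean identification of $\coker p^*$ and the resulting $H_{\cA,\Sigma}$-action on $\cA$-coordinates---classical in the absence of self-folded triangles, but requiring a detour through tagged triangulations \cite{FST} in their presence---after which the scaling computation of $W_p$ and the conclusion are immediate.
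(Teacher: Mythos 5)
Your proof is correct, but it takes a genuinely different route from the paper. The paper's own proof for $\bP=\bR^\trop$ is geometric: it shows that the tropicalized potential $w$ equals the peripheral-decoration map $\pot$ under $\dMF(\Sigma)\cong\cA_\Sigma(\bR^\trop)$, so that $\cV_\Sigma(\bR^\trop)=w^{-1}(0)=\pot^{-1}(0)$ is literally the zero section of the trivial bundle $\dMF(\Sigma)=\MF(\Sigma)\times\bR^P\to\MF(\Sigma)$; for $\bP=\bR_{>0}$ it defers to Penner's identification of $\cA_\Sigma(\bR_{>0})$ with decorated Teichm\"uller space. Your argument is instead the purely cluster-algebraic one that the paper attributes to \cite[Theorem 6.2]{GS15} but declines to reproduce: identify $\coker p^*\cong\bZ^P$ and the resulting $H_{\cA,\Sigma}\cong\bG_m^P$-action on $\cA$-coordinates, observe that each summand $A^0_{p,t}/(A^1_{p,t}A^2_{p,t})$ of $W_p$ transforms by $t_p^{-2}$ (since the $t_{q_i}$ from $\alpha^0$ cancel those from $\alpha^1,\alpha^2$), so $W$ is equivariant for the character $(t_p)_p\mapsto(t_p^{-2})_p$, which is invertible on $\bR_{>0}$ and tropicalizes to the invertible linear map $c\mapsto -2c$ on $\bR^\trop$; hence $\cV_\Sigma(\bP)=W^{-1}(\mathbf 1)$ meets each $H_{\cA,\Sigma}(\bP)$-orbit exactly once and is a section of the torsor $p_\Sigma$. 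What your route buys is uniformity over both semifields and independence from the geometric models ($\dMF$, decorated Teichm\"uller); what the paper's route buys is the explicit geometric meaning of $w$ as the peripheral measure $\pot$, which it reuses later (e.g.\ in \cref{thm:TT_fan}). Your closing caveat about self-folded triangles is well placed: the identification of $\coker p^*$ and the boundary-incidence formula for the $H_{\cA,\Sigma}$-action need the tagged-triangulation formalism of \cite{FST} to hold uniformly, which is exactly the subtlety the paper sidesteps by arguing geometrically.
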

We can prove it by using the proof of \cite[Theorem 6.2]{GS15} in the more general setting, the mutation class $\bs$ is obtained from the pair $(G, \Sigma)$ of a semisimple group $G$ and a marked surface $\Sigma$.
Here, we give a geometric proof for $\bP = \bR^\trop$.
The case $\bP = \bR_{>0}$ is proven by the identification $\cA_\Sigma(\bR_{>0})$ and the decorated \Teich\ space (\cite{Pen}).

First, we see the geometric description of the tropicalized GS potential $w:= W^{\bR^\trop}: \cA_\Sigma(\bR^\trop) \to \bR^P$.


\begin{lem}
Under the identification $\dMF(\Sigma) \cong \cA_\Sigma(\bR^\trop)$, we have $w = \pot$.
\end{lem}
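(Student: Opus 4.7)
The plan is to compute the tropicalization $w$ chart-by-chart and match it against the geometric formula \eqref{eq:pot_geom} already established for $\pot$. First, I would fix an ideal triangulation $\tri \in \Tri(\Sigma)$ and work in the cluster chart given by the coordinates $A_\alpha^\tri$, $\alpha \in \tri$. In this chart the GS potential restricts to the subtraction-free rational expression
\begin{equation*}
W_p^\tri \;=\; \sum_{t \in T_{\tri,p}} \frac{A^0_{p,t}}{A^1_{p,t}\,A^2_{p,t}}
\end{equation*}
from \eqref{eq:pot_gen}. Since $W_p^\tri$ is a positive rational function, its tropicalization at $\bR^\trop=(\bR,\min,+)$ is obtained by sending sums to minima and products/quotients to sums/differences of the corresponding tropical variables $a^i_{p,t}$. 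This yields
\begin{equation*}
w_p^\tri \;=\; \min_{t \in T_{\tri,p}} \bigl(a^0_{p,t} - a^1_{p,t} - a^2_{p,t}\bigr).
\end{equation*}

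Next, I would apply the sign-reversed identification $\dMF(\Sigma)\xrightarrow{\sim}\cA_\Sigma(\bR^\trop)$ recorded just before the lemma, which intertwines the $\cA$-coordinate $\sfa_\alpha$ with $-a^{(\tri,\ell)}_\alpha$. Substituting $a^i_{p,t} = -\sfa_{\alpha^i_{p,t}}$ turns the expression above into
\begin{equation*}
w_p \;=\; \min_{t \in T_{\tri,p}} \bigl(\sfa_{\alpha^1_{p,t}} + \sfa_{\alpha^2_{p,t}} - \sfa_{\alpha^0_{p,t}}\bigr),
\end{equation*}
which is exactly the formula \eqref{eq:pot_geom} for $\pot_p$ in the $\cA$-coordinate $\mathbf{\sfa}^\tri$. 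Doing this for every $p\in P$ gives $w = \pot$ pointwise.

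Two subsidiary points merit a sentence each. The equality holds globally, not just in the chosen chart, because $W$ is a well-defined morphism $\cA_\Sigma \to (\bA^1)^P$ (the local formulas \eqref{eq:pot_gen} glue across flips) and $\pot$ is the natural projection $\dMF(\Sigma) = \MF(\Sigma)\times \bR^P \to \bR^P$, so both sides are coordinate-independent; hence a chart-level match suffices. Second, the sign convention in the identification $\dMF(\Sigma)\cong\cA_\Sigma(\bR^\trop)$ is the only place a mistake can creep in, and I would double-check it against the coordinate transformation rule in \cref{fig:trop_A_flip} to make sure the $(-1)$ factor is consistent with Fock--Goncharov's tropical cluster mutation.

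The argument is essentially a bookkeeping computation once the tropicalization formalism and the sign in the identification are in place; I do not anticipate a genuine obstacle. The main thing to be careful about is simply that tropicalization of the subtraction-free expression \eqref{eq:pot_gen} is performed correctly and that the sign convention of the identification is tracked, so that the minimum ends up over the correct linear forms $\sfa_{\alpha^1_{p,t}}+\sfa_{\alpha^2_{p,t}}-\sfa_{\alpha^0_{p,t}}$ matching \eqref{eq:pot_geom}.
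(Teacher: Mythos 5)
Your proof is correct and coincides in substance with the paper's: both rely on the geometric formula \eqref{eq:pot_geom} for $\pot_p$, tropicalize the rational expression \eqref{eq:pot_gen} term by term, and match the two via the sign-reversing identification $a_i^{(\tri,\ell)}=-\sfa_{\ell(i)}$. You merely run the chain of equalities in the opposite direction and add a couple of harmless sanity remarks about coordinate-independence and sign conventions.
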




\begin{proof}
For $\tri \in \Tri(\Sigma)$ and $p \in P$,
\begin{align*}
\pot_p
&= \min\big\{\sfa^1_{p, t} + \sfa^2_{p,t} - \sfa^0_{p,t} \ \big|\ t \in T_{\tri, p} \big\}\\
&= \min \big\{ a^0_{p,t} - a^1_{p, t} - a^2_{p,t} \ \big|\ t \in T_{\tri, p}\big\} = w_{\tri, p} \circ a_\tri = w_p.
\end{align*}
\end{proof}

Therefore, we get
\begin{align*}
    \cU_\Sigma(\bR^\trop) \cong \MF(\Sigma) \cong \pot^{-1}(0) \cong w^{-1}(0) \cong \cV_\Sigma(\bR^\trop).
\end{align*}


\begin{rmk}
The map $\pot$ is the minus of the max-plus tropicalization of the GS potential $W$.
\end{rmk}

By the identification of the map $\pot$ of $\dMF(\Sigma)$ and $w$ of $\cA_\Sigma(\bR^\trop)$, we can translate the results \cref{thm:TT_fan,thm:tt_atlas_MF}:
\begin{cor}\label{cor:TT_D}
There is a bijection $\TT^{\max}_\tri \xrightarrow{\sim} D_\tri(w)$.
Here, $D_\tri(w)$ denotes the set of maximal domains of linearity of $w$ in the cluster $\cA$-coordinate $\mathbf{a}^\tri$.
\end{cor}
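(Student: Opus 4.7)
The plan is to stitch together three ingredients that have already been established in the excerpt: (i) the bijection $\widetilde{\cV}:\TT^{\max}_\tri \xrightarrow{\sim} D_\tri(\pot)$ of \cref{thm:TT_fan}; (ii) the identification $\dMF(\Sigma)\cong \cA_\Sigma(\bR^\trop)$ intertwining the $\cA$-coordinate $\mathbf{\sfa}^\tri$ with the tropical cluster $\cA$-coordinate $\mathbf{a}^\tri$ (up to an overall sign); and (iii) the lemma immediately preceding the corollary, which identifies $\pot = w$ under this correspondence.

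Concretely, I would argue as follows. First, the commutative square
\[
\begin{tikzcd}
\dMF(\Sigma)\ar[r,"\sim"]\ar[d,"\sfa_\alpha"'] & \cA_\Sigma(\bR^\trop)\ar[d,"a^{(\tri,\ell)}_i"]\\
\bR\ar[r,"(-1)"] & \bR
\end{tikzcd}
\]
shows that $a^{(\tri,\ell)}_i = -\sfa_{\ell(i)}$ for every $i$. Since a subset of an affine space is a (maximal) domain of linearity of a PL function in a given coordinate system if and only if it is a (maximal) domain of linearity in that system composed with an affine isomorphism, the sign change is immaterial, and the two collections of maximal domains of linearity coincide as subsets of $\dMF(\Sigma) \cong \cA_\Sigma(\bR^\trop)$.

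Next, invoking the preceding lemma $w = \pot$ under this identification, the PL function whose linearity domains define $D_\tri(w)$ is literally the same function whose linearity domains define $D_\tri(\pot)$; hence $D_\tri(w) = D_\tri(\pot)$ as sets of cones. Combining this equality with the bijection $\widetilde{\cV}:\TT^{\max}_\tri \xrightarrow{\sim} D_\tri(\pot)$ of \cref{thm:TT_fan} yields the desired bijection $\TT^{\max}_\tri \xrightarrow{\sim} D_\tri(w)$.

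There is essentially no obstacle here beyond bookkeeping: all the substantive content has been done in \cref{thm:TT_fan} (where, given $\tau \in \TT^{\max}_\tri$, the cone $\widetilde{\cV}(\tau)$ is exhibited as the locus on which each $\pot_p$ agrees with the single linear functional $\pot_{t_p}=\sfa_{\alpha^1_p}+\sfa_{\alpha^2_p}-\sfa_{\alpha^0_p}$, and conversely a tuple of triangles $\bft=(t_p)_p$ realizing an element of $D_\tri(\pot)$ is used to cut the freeway $\tau_\tri$ into $\tau_\bft \in \TT^{\max}_\tri$) and in the lemma translating $\pot$ into $w$. The only point to be careful about is verifying that ``maximal domain of linearity'' is intrinsic to the piecewise linear function and not to the particular chart, which is immediate since the two charts differ by a global sign on each coordinate.
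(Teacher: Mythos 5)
Your proof is correct and follows essentially the same route as the paper, which obtains \cref{cor:TT_D} in a single sentence by translating \cref{thm:TT_fan} through the preceding lemma identifying $w$ with $\pot$ on $\dMF(\Sigma)\cong\cA_\Sigma(\bR^\trop)$. You simply make explicit the small bookkeeping point — that the global coordinate negation $a^{(\tri,\ell)}_i=-\sfa_{\ell(i)}$ preserves (maximal) domains of linearity — which the paper leaves implicit.
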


\begin{ex}
Here we give the example of the GS potential of the sphere with 4 punctures and its tropicalization.
Let us consider the following labeled triangulation $(\tri, \ell)$
\begin{center}
\begin{tikzpicture}
    \node [fill, circle, inner sep=1.3] (v1) at (-0.35,2.85) {};
    \node [fill, circle, inner sep=1.3] (v3) at (-0.35,-0.15) {};
    \node [fill, circle, inner sep=1.3] (v2) at (-1.85,1.35) {};
    \node [fill, circle, inner sep=1.3] (v4) at (1.15,1.35) {};
    \draw [blue](v1) -- (v2) -- (v3) -- (v4) -- (v1) -- (v3);
    \draw [blue](-0.35,-0.15) .. controls (-1.35,-0.65) and (-2.85,-0.15) .. (-2.85,1.35) .. controls (-2.85,2.85) and (-1.35,3.35) .. (-0.35,2.85);
    \node [blue] at (-0.15,1.35) {\scriptsize 1};
    \node [blue] at (-1.15,2.3) {\scriptsize 2};
    \node [blue] at (-1.35,0.6) {\scriptsize 3};
    \node [blue] at (0.45,0.4) {\scriptsize 4};
    \node [blue] at (0.5,2.25) {\scriptsize 5};
    \node [blue] at (-3.05,1.35) {\scriptsize 6};
    \node at (0,3) {$p_A$};
    \node at (-2.1,1.55) {$p_B$};
    \node at (0,-0.35) {$p_C$};
    \node at (1.5,1.5) {$p_D$};
\end{tikzpicture}
\end{center}
and take a labeling for the punctures as above.
Then, 
\begin{align*}
    W^\tri_{p_A} &= \frac{A_3}{A_1 A_2} + \frac{A_3}{A_2 A_6} + \frac{A_4}{A_1 A_5} + \frac{A_4}{A_5 A_6},\\
    W^\tri_{p_B} &= \frac{A_1}{A_2 A_3} + \frac{A_6}{A_2 A_3},\\
    W^\tri_{p_C} &= \frac{A_2}{A_1 A_3} + \frac{A_2}{A_3 A_6} + \frac{A_5}{A_1 A_4} + \frac{A_5}{A_4 A_6},\\
    W^\tri_{p_D} &= \frac{A_1}{A_4 A_5} + \frac{A_6}{A_4 A_5}.
\end{align*}
Its tropicalizations are
\begin{align*}
    w^\tri_{p_A} &= \min\{ a_3 - a_1 - a_2,\, a_3 - a_2 - a_6,\, a_4 - a_1 - a_5,\, a_4 - a_5 - a_6\}, \\
    w^\tri_{p_B} &= \min\{ a_1 - a_2 - a_3,\, a_6 - a_2 - a_3 \},\\
    w^\tri_{p_C} &= \min\{ a_2 - a_1 - a_3,\, a_2 - a_3 - a_6,\, a_5 - a_1 - a_4,\, a_5 - a_4 - a_6 \},\\
    w^\tri_{p_D} &= \min\{ a_1 - a_4 - a_5,\, a_6 - a_4 - a_5 \}.
\end{align*}
The subspace $\cV_\Sigma(\bR^\trop) \subset \cA_\Sigma(\bR^\trop)$ is defined by the PL equations $w^\tri_{p_A} = w^\tri_{p_B} = w^\tri_{p_C} = w^\tri_{p_D} = 0$.
If $w^\tri_{p_A} = a_3 - a_1 - a_2 = 0$, then we have
\begin{align*}
a_1 - a_6 \geq 0,\ a_4 - a_1 - a_5 \geq 0,\ a_4 - a_5 - a_6 \geq 0
\end{align*}
from $w^\tri_{p_A} \geq 0$.
Using this (in)equalities, we get
\begin{align*}
    w^\tri_{p_B} = a_6 - a_2 - a_3 = 0,\ 
    w^\tri_{p_C} = a_5 - a_1 - a_4 = 0,\ 
    w^\tri_{p_D} = a_6 - a_4 - a_5 = 0.
\end{align*}
From them, the points of $\cV_{\Sigma}(\bR^\trop)$ satisfying $w^\tri_{p_A} = a_3 - a_1 - a_2 = 0$ is parametrized by $a_2 + a_4 - a_3 - a_5$ and $a_1 - a_6$ and they satisfies the inequalites
\begin{align*}
    a_2 + a_4 - a_3 - a_5 \geq 0,\ a_1 - a_6 \geq 0.
\end{align*}
The other cases $w^\tri_{p_A} =a_3 - a_2 - a_6 = 0$, $w^\tri_{p_A} = a_4 - a_1 - a_5 = 0$ and $w^\tri_{p_A} = a_4 - a_5 - a_6 = 0$ are also parametrized by $a_2 + a_4 - a_3 - a_5$ and $a_1 - a_6$ and they satisfies some inequalities.
We can summarize them as \cref{fig:PL_str_V_4_sph}.

\begin{figure}[h]
    \centering
    \hspace{2.8cm}
    \begin{tikzpicture}
    \draw (0,-2) -- (6,-2);
    \draw (3,1)  -- (3,-5);
    
    \node at (1,-0.5) {$\begin{array}{c}
        a_2 + a_4 \geq a_3 + a_5\\
        a_1 \leq a_6  
        \end{array}$};
    \node at (5,-0.5) {$\begin{array}{c}
        a_2 + a_4 \geq a_3 + a_5\\
        a_1 \geq a_6  
        \end{array}$};
    \node at (1,-3.5) {$\begin{array}{c}
        a_2 + a_4 \leq a_3 + a_5\\
        a_1 \leq a_6  
        \end{array}$};
    \node at (5,-3.5) {$\begin{array}{c}
        a_2 + a_4 \leq a_3 + a_5\\
        a_1 \geq a_6  
        \end{array}$};
    
    \node at (3,1.5) {$a_1=a_6$};
    \node at (8,-2) {$a_2 + a_4 = a_3 + a_5$};
\end{tikzpicture}
    \caption{The domains of linearity of the tropicalized GS potential of the sphere with 4 punctures.}
    \label{fig:PL_str_V_4_sph}
\end{figure}
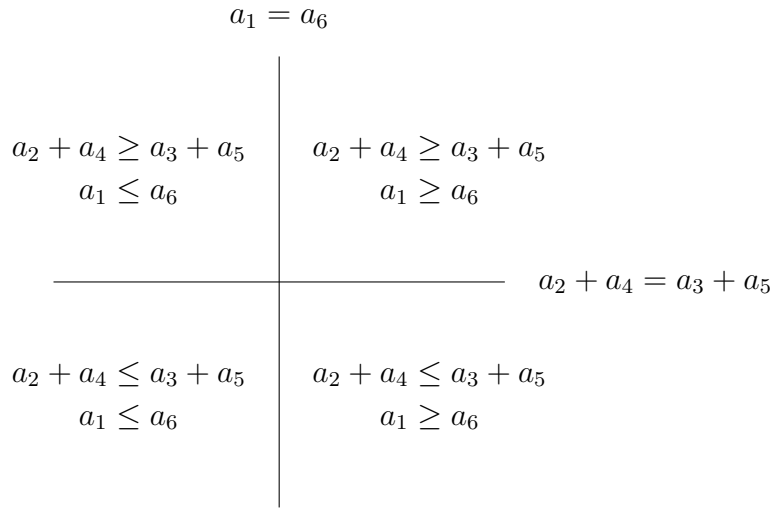

One can verify that each domain in this figure corresponds to the domain in the same position of \cref{fig:tt_fan_4shp}.
We note that $a_i = - \sfa_i$.
\end{ex}

Let us identify $\cV_\Sigma(\bR^\trop) \subset \cA_\Sigma(\bR^\trop)$ and $\MF(\Sigma) \subset \dMF(\Sigma)$, respectively.
By \cref{thm:tt_atlas_MF}, we have the PL atlas on $\cV_\Sigma(\bR^\trop) \setminus \{0\}$ and $\cA_\Sigma(\bR^\trop) \setminus p_\Sigma^{-1}(0)$ given by the complete train tracks which suited to ideal triangulations.

\begin{thm}\label{thm:atlases}
Let $\Sigma$ be a punctured surface.
Then, the PL structures on $\cV_\Sigma(\bR^\trop) \setminus \{0\}$ and $\cA_\Sigma(\bR^\trop) \setminus p_\Sigma^{-1}(0)$ given by complete train tracks and tropicalized cluster structures are equivalent.
\end{thm}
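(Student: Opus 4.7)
The plan is to verify that the transitions between train track charts and cluster $\cA$-coordinate charts are PL. Equivalence of the two atlases then follows on $\cA_\Sigma(\bR^\trop) \setminus p^{-1}_\Sigma(0)$, and by restriction to the locus $w = 0$ also on $\cV_\Sigma(\bR^\trop) \setminus \{0\}$ via the identification $\cV_\Sigma(\bR^\trop) \cong \MF(\Sigma)$ from \cref{prop:V=U}.

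First I would make the relation between the two coordinate systems explicit. Fix a complete train track $\tau \in \TT^{\max}_\tri$ and, for $\widetilde{\cF} = (\cF, \mathbf{c}) \in \widetilde{\cV}(\tau)$, take a representative whose total foliation part sits inside $N_\tau$ with leaves transverse to the ties and whose peripheral leaves around each puncture $p$ contribute the decoration $c_p$. In each ideal triangle of $\tri$ the leaves are then organized around the corners with weights dictated by the measures of the short branches of $\tau$, so for each $\alpha \in \tri$ the measure $\sfa_\alpha(\widetilde{\cF}) = \int_\alpha \widetilde{\mu}$ is an explicit linear function of $(\nu, \mathbf{c}) = (\phi_\tau(\cF), \mathbf{c})$. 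This yields a linear map
\[ L_{\tau, \tri} \colon \widetilde{V}(\tau) = V(\tau) \times \bR^P \longrightarrow \bR^\tri \]
whose image is $\mathbf{\sfa}^\tri(\widetilde{\cV}(\tau))$.

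Next I would show that $L_{\tau, \tri}$ is a linear isomorphism onto its image. A dimension count gives $\dim \widetilde{V}(\tau) = (6g - 6 + 2h) + h = 6g - 6 + 3h = |\tri|$, and by \cref{cor:TT_D} the image $\mathbf{\sfa}^\tri(\widetilde{\cV}(\tau))$ is a \emph{maximal} linearity cell of the tropicalized Goncharov--Shen potential $w$ in the $\cA$-coordinates of $\tri$, hence full-dimensional in $\bR^\tri$. A linear map between spaces of equal dimension with full-dimensional image is an isomorphism, so $L_{\tau, \tri}$ admits a linear inverse. Thus the transition between the train track chart $(\interior \widetilde{\cV}(\tau), \phi_\tau \times \mathrm{id}_{\bR^P})$ and the cluster chart $\mathbf{\sfa}^\tri$ is linear in both directions. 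For a cluster chart $\mathbf{\sfa}^{\tri'}$ associated to any other triangulation $\tri'$, one composes with the finite sequence of tropicalized flips (\cref{fig:trop_A_flip}) relating $\mathbf{\sfa}^\tri$ and $\mathbf{\sfa}^{\tri'}$, which are PL by construction. Every transition is therefore PL; combined with \cref{thm:tt_atlas_MF} this gives the desired PL equivalence of atlases.

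The main technical obstacle is the invertibility of $L_{\tau, \tri}$. A direct combinatorial approach would require tracking how short versus long branches of $\tau$ contribute to each $\sfa_\alpha$ through the type II and type III configurations of \cref{fig:suit_train_track} in every ideal triangle of $\tri$, a lengthy case analysis. The cleanest workaround is the dimension argument above, which extracts invertibility from the already-established \cref{thm:TT_fan} and \cref{cor:TT_D} without any explicit matrix computation.
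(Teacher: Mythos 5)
Your proposal is correct, and it shares the paper's strategy of reducing to showing that the transition between a train track chart and the cluster $\cA$-chart for a compatible ideal triangulation is linear, then composing with the PL tropicalized flips. The difference lies in how invertibility of that linear transition is established. The paper proves linearity and invertibility in a single stroke by a concrete identification: both the cluster coordinate $\mathbf{a}^\tri$ and the train track coordinate $\phi_\tau \times \mathrm{id}_{\bR^P}$ are read off as weights on subsets of branches of the freeway $\tau_\tri$ --- the former at the long branches, the latter at $B_\tau \cup B(\tau_\tri,\tau)$ --- so the transition is exactly the base change between two bases of the same switch-constrained weight space. You instead assert linearity of $L_{\tau,\tri}$ from the geometric picture and then obtain invertibility from the dimension count $\dim \widetilde V(\tau) = (6g-6+2h)+h = |\tri|$ together with \cref{cor:TT_D} (full-dimensionality of the image cell). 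This trades the paper's self-contained combinatorial identification --- which is also what makes the linearity itself transparent rather than asserted --- for a leaner argument that leans on the structural result \cref{cor:TT_D}. Your route is sound; the paper's buys a fully explicit description of the transition map, which is what is being reused in \cref{thm:tt_atlas_MF}.
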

\begin{proof}
It is enough to show that the piecewise linearity of the coordinate change from the chart given by the tropicalized cluster structure to the chart given by a complete train track.
Let $\tau$ be a complete train track that is suited to an ideal triangulation $\tri$ of $\Sigma$.
Then, our aim is to prove the map
\begin{align}\label{eq:coord_change}
    (\phi_\tau \times \mathrm{id}_{\bR^P}) \circ (a_\tri|_{K(\tau)})^{-1}: \bR^\tri \to \bR^{B_\tau} \times \bR^P
\end{align}
is piecewise linear.
Here, $K(\tau) := {a}_\tri(\interior \widetilde{\cV}(\tau))$ and $B_\tau$ is the subset of $B(\tau)$ as in \eqref{eq:tt_chart}.
Let $B(\tau_\tri, \tau) \subset B(\tau_\tri)$ consist of the short branches of the freeway $\tau_\tri$ such that the graph obtained from $\tau_\tri$ by removing the branches in $B(\tau_\tri, \tau)$ coincides with the train track $\tau$.
We note that the extra map $\mathrm{id}_{\bR^P}$ corresponds to the set $B(\tau_\tri, \tau)$ when we think that $\widetilde{V}(\tau)$ is the set of weights of the freeway $\tau_\tri$.
Also, the map $\mathbf{a}^\tri$ is nothing but the weight at the long branches of $\tau_\tri$.
Thus, the map \eqref{eq:coord_change} is the restriction of the base change map from the basis given by the long branches of $\tau_\tri$ to the basis given by the branches $B_\tau \cup B(\tau_\tri, \tau)$, so especially this map is linear.
\end{proof}

\begin{proof}[Proof of \cref{thm:tt_atlas_MF}]
We see that the coordinate change from the chart given by a complete train track $\tau_1$ to the chart given by the other complete train track $\tau_2$ is PL.
We note that
\begin{align*}
    (\phi_{\tau_2} \circ \phi_{\tau_1}^{-1}) |_{\phi_{\tau_1}(\interior \widetilde{\cV}(\tau_1) \cap  \interior\widetilde{\cV}(\tau_2))} =
    (\phi_{\tau_2} \circ a_{\tri_2}^{-1} \circ a_{\tri_2} \circ a_{\tri_1}^{-1} \circ a_{\tri_1} \circ \phi_{\tau_1}^{-1}) |_{\phi_{\tau_1}(\interior \widetilde{\cV}(\tau_1) \cap  \interior\widetilde{\cV}(\tau_2))}\ .
\end{align*}
Here, RHS of the above equation is the composition of PL maps $\phi_{\tau_2} \circ a_{\tri_2}^{-1}$, $a_{\tri_2} \circ a_{\tri_1}^{-1}$ and $a_{\tri_1} \circ \phi_{\tau_1}^{-1}$, thus itself is a PL map.
By combining \cref{lem:open_cov_TT}, the desired statement is proven.
\end{proof}

\part{Train track splittings and tropical cluster transformations}\label{part:SS}
In this second half part, we prove the sign stability of the general pseudo-Anosov mapping classes by using a specific sequence of splittings and shiftings of the ``invariant track''.
The observation about the binary relation $\lambda_k$ between $\TT^{\max}_{(\tri, \ell)}$ and $\TT^{\max}_{(\tri', \ell')}$ for a horizontal edge $(\tri, \ell) \overbar{k} (\tri', \ell')$ in $\bTri(\Sigma)$, summarized in \cref{tab:deform_train_track}, plays a central role here.
\section{Sign-stable mutation loops}\label{sec:sign-stab}
In this section, we give a brief review of the notion of sign-stability of a representation path of a mutation loop from \cite{IK21}.

Let $\bs$ be a mutation class.
First, we recall the tropical cluster $\X$-transformation $\mu_\indk: \X_{(v)}(\bR^\trop) \to \X_{(v')}(\bR^\trop)$ associated with an edge $v \overbar{k} v'$ in $\bExch_\bs$.
For a real number $a \in \bR$, let $\sgn(a)$ denote its sign:
\[
\sgn(a):=
\begin{cases}
    + & \mbox{ if } a>0,\\
    0 & \mbox{ if } a=0,\\
    - & \mbox{ if } a<0.
\end{cases}
\]
The following expression is useful in the sequel:

\begin{lem}[{\cite[Lemma 3.1]{IK21}}]\label{l:x-cluster signed}
The tropical cluster $\X$-transformation $\mu_\indk: \X_{(v)}(\bR^\trop) \to \X_{(v')}(\bR^\trop)$ is given by
\begin{align}\label{eq:sign x-cluster}
    \mu_\indk^* x^{(v')}_\indi =
\begin{cases}
    -x^{(v)}_\indk & \mbox{if $\indi=\indk$}, \\
    x^{(v)}_\indi(w)+[\sgn(x^{(v)}_\indk)b^{(v)}_{\indi\indk}]_+x^{(v)}_\indk & \mbox{if $\indi \neq \indk$}.
\end{cases}
\end{align}
\end{lem}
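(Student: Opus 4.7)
The plan is to derive the stated piecewise-linear formula by directly tropicalizing the standard rational formula for the $\cX$-cluster mutation. Recall that in the algebraic torus presentation, the Fock--Goncharov rule for the edge $v \overbar{\indk} v'$ reads
\begin{align*}
\mu_\indk^* x^{(v')}_\indi \;=\;
\begin{cases}
(x^{(v)}_\indk)^{-1} & \text{if } \indi = \indk,\\[2pt]
x^{(v)}_\indi\, \bigl(1 + (x^{(v)}_\indk)^{-\sgn(b^{(v)}_{\indi\indk})}\bigr)^{-b^{(v)}_{\indi\indk}} & \text{if } \indi \neq \indk.
\end{cases}
\end{align*}
This expression is subtraction-free, and hence admits an unambiguous tropicalization at $\bR^\trop = (\bR, \min, +)$.

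For $\indi = \indk$, the tropicalization of $(x^{(v)}_\indk)^{-1}$ is $-x^{(v)}_\indk$, giving the first case of \eqref{eq:sign x-cluster} immediately. For $\indi \neq \indk$, applying the tropicalization (which sends $\cdot \mapsto +$, $+ \mapsto \min$, and an integer exponent $c$ to multiplication by $c$) produces
\begin{align*}
\mu_\indk^* x^{(v')}_\indi \;=\; x^{(v)}_\indi \;-\; b^{(v)}_{\indi\indk}\, \min\!\bigl(0,\ -\sgn(b^{(v)}_{\indi\indk})\, x^{(v)}_\indk\bigr).
\end{align*}
Using the elementary identity $\min(0, -a) = -[a]_+$ valid for all $a \in \bR$, the right-hand side rewrites as $x^{(v)}_\indi + b^{(v)}_{\indi\indk}\, [\sgn(b^{(v)}_{\indi\indk})\, x^{(v)}_\indk]_+$.

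It then remains to verify the symmetric identity
\begin{align*}
b^{(v)}_{\indi\indk}\, \bigl[\sgn(b^{(v)}_{\indi\indk})\, x^{(v)}_\indk\bigr]_+ \;=\; \bigl[\sgn(x^{(v)}_\indk)\, b^{(v)}_{\indi\indk}\bigr]_+\, x^{(v)}_\indk,
\end{align*}
which I will handle by a short case analysis on the signs of $b^{(v)}_{\indi\indk}$ and $x^{(v)}_\indk$: both sides vanish unless these signs agree and are nonzero, and when they do agree both expressions evaluate to the common value $b^{(v)}_{\indi\indk}\, x^{(v)}_\indk$ in the $({+}{+})$ case and $-b^{(v)}_{\indi\indk}\, x^{(v)}_\indk$ in the $({-}{-})$ case. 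The main obstacle is nothing more than careful sign bookkeeping; the virtue of the stated presentation is that it isolates $\sgn(x^{(v)}_\indk)$, which is the combinatorial datum driving the definition of sign stability used throughout the second part of the paper.
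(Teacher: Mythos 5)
Your derivation is correct: you tropicalize the Fock--Goncharov form $X'_i = X_i\bigl(1+X_k^{-\sgn(b_{ik})}\bigr)^{-b_{ik}}$ (which is precisely the rule recorded in \cref{sec:appCA}) and then verify the sign identity $b_{ik}\bigl[\sgn(b_{ik})\,x_k\bigr]_+ = \bigl[\sgn(x_k)\,b_{ik}\bigr]_+\,x_k$ by a six-way case check, which is exactly the content of the cited Lemma 3.1 of \cite{IK21}. The present paper imports this lemma without reproving it, and your argument matches the standard proof; the only superficial remark is that the lemma statement here carries a stray ``$(w)$'' after $x^{(v)}_\indi$ in the $\indi\neq\indk$ case, a typo you have implicitly and correctly ignored.
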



\begin{dfn}[sign of a path]\label{d:sign}
Let $\gamma: v_0 \overbar{k_0} v_1 \overbar{k_1} \cdots \overbar{k_{m-1}} v_m$ be a path in $\bExch_\bs$ and we fix a point $w \in \X_{\bs}(\bR^\trop)$.
\item The \emph{sign} of $\gamma$ at $w$ is the sequence $\boldsymbol{\epsilon}_\gamma(w)=(\epsilon_0,\dots,\epsilon_{h-1}) \in \{+,0,-\}^h$ of signs defined by
\[\epsilon_{\nu} := \sgn(x^{(v_{i(\nu)})}_{k_{i(\nu)}}(w))\]
for $\nu=0,\dots,h-1$.
Here, $(k_{i(0)}, k_{i(1)}, \dots, k_{i(h-1)})$ is the subsequence of $(k_0, k_1, \dots, k_m)$ consists of horizontal edges.
\end{dfn}

\begin{ex}\label{ex:sign_geom}
For $[F,\mu] \in \MF(\Sigma) \cong \cV_\Sigma(\bR^\trop) \xrightarrow{\sim} \cU_\Sigma(\bR^\trop)$, we have $x_\alpha^{\tri}([F,\mu]) = \sum_{\beta} b^{\tri}_{\alpha \beta} a_\beta([F,\mu])$.
Thus, $\sgn(x_\alpha^\tri[F, \mu]) = +$ (resp. $-$) if and only if its support likes in the configuration in the left (resp. right) of \cref{f:shear}.
Namely, $x^\tri_\alpha([F, \mu])$ is given by $\int_\gamma \mu$ (resp. $-\int_\gamma \mu$) if $\sgn(x_\alpha^\tri([F, \mu])) = +$ (resp. $=-$).
Here, $\gamma$ is the transversal arc which connects the singular leaves emanating from the singular points in the ideal triangles whose side contains $\alpha$.
\begin{figure}[h]
    \begin{tikzpicture}[scale=0.87]
    \node [fill, circle, inner sep=1.3] (v1) at (0,2.5) {};
    \node [fill, circle, inner sep=1.3] (v3) at (0,-2.5) {};
    \node [fill, circle, inner sep=1.3] (v2) at (-4,0) {};
    \node [fill, circle, inner sep=1.3] (v4) at (4,0) {};
    \draw [blue](v1) -- (v2) -- (v3) -- (v4) -- (v1) -- (v3);
    \draw [thick](-1.75,0.35) .. controls (-2.2,-0.35) and (-2.3,-0.65) .. (-2.45,-0.95);
    \draw [thick](-1.75,0.35) .. controls (-2.2,0.8) and (-2.2,0.8) .. (-2.4,1);
    \draw [thick](-1.75,0.35) .. controls (-1.15,0.45) and (-0.1,1.05) .. (1.2,1.75);
    \draw [thick](1.2,-0.35) .. controls (0,-0.75) and (-0.65,-1.4) .. (-1.05,-1.85);
    \draw [thick](1.2,-0.35) .. controls (1.9,0.4) and (2.3,0.7) .. (2.55,0.9);
    \draw [thick](1.2,-0.35) .. controls (1.65,-0.75) and (1.9,-0.95) .. (2.1,-1.2);
    \draw (-1.95,1.3) .. controls (-1.45,0.9) and (-1.5,0.55) .. (1,1.9);
    \draw (-1.5,1.55) .. controls (-1.15,1.3) and (-1,1.15) .. (0.65,2.05);
    \draw (-1.05,1.85) .. controls (-0.8,1.65) and (-0.15,1.9) .. (0.4,2.25);
    \draw (-2.75,0.75) .. controls (-2.35,0.45) and (-2.35,0.05) .. (-2.85,-0.7);
    \draw (-3.15,0.5) .. controls (-2.95,0.3) and (-2.9,0) .. (-3.2,-0.5);
    \draw (-3.6,0.25) .. controls (-3.45,0.1) and (-3.45,-0.05) .. (-3.55,-0.25);
    \draw (-2.15,-1.15) .. controls (-1.5,0) and (-1.3,0) .. (1.45,1.6);
    \draw (-1.8,-1.35) .. controls (-1.15,-0.3) and (-0.25,0.3) .. (1.7,1.4);
    \draw (-1.55,-1.55) .. controls (-0.7,-0.45) and (0.25,0.1) .. (2,1.25);
    \draw (-1.3,-1.7) .. controls (-0.25,-0.3) and (1.15,-0.05) .. (2.25,1.1);
    \draw (-0.75,-2) .. controls (0.6,-0.75) and (1,-0.85) .. (1.7,-1.45);
    \draw (-0.5,-2.2) .. controls (0.25,-1.4) and (0.95,-1.45) .. (1.2,-1.75);
    \draw (-0.25,-2.3) .. controls (0,-2) and (0.55,-1.9) .. (0.75,-2.05);
    \draw (2.8,0.75) .. controls (1.75,-0.35) and (1.75,-0.35) .. (2.45,-0.95);
    \draw (3.1,0.6) .. controls (2.35,-0.2) and (2.35,-0.35) .. (2.8,-0.75);
    \draw (3.4,0.4) .. controls (2.95,-0.1) and (2.95,-0.25) .. (3.15,-0.55);
    \draw (3.7,0.2) .. controls (3.5,0) and (3.4,-0.15) .. (3.5,-0.3);
    \draw[red, thick] (-0.5,0.85) .. controls (-0.4,0.3) and (0.4,0) .. (0.55,-0.6);
    \node[red] at (0.25,0.3) {$\gamma$};
    \begin{scope}[xshift=280, xscale=-1]
    \node [fill, circle, inner sep=1.3] (v1) at (0,2.5) {};
    \node [fill, circle, inner sep=1.3] (v3) at (0,-2.5) {};
    \node [fill, circle, inner sep=1.3] (v2) at (-4,0) {};
    \node [fill, circle, inner sep=1.3] (v4) at (4,0) {};
    \draw [blue](v1) -- (v2) -- (v3) -- (v4) -- (v1) -- (v3);
    \draw [thick](-1.75,0.35) .. controls (-2.2,-0.35) and (-2.3,-0.65) .. (-2.45,-0.95);
    \draw [thick](-1.75,0.35) .. controls (-2.2,0.8) and (-2.2,0.8) .. (-2.4,1);
    \draw [thick](-1.75,0.35) .. controls (-1.15,0.45) and (-0.1,1.05) .. (1.2,1.75);
    \draw [thick](1.2,-0.35) .. controls (0,-0.75) and (-0.65,-1.4) .. (-1.05,-1.85);
    \draw [thick](1.2,-0.35) .. controls (1.9,0.4) and (2.3,0.7) .. (2.55,0.9);
    \draw [thick](1.2,-0.35) .. controls (1.65,-0.75) and (1.9,-0.95) .. (2.1,-1.2);
    \draw (-1.95,1.3) .. controls (-1.45,0.9) and (-1.5,0.55) .. (1,1.9);
    \draw (-1.5,1.55) .. controls (-1.15,1.3) and (-1,1.15) .. (0.65,2.05);
    \draw (-1.05,1.85) .. controls (-0.8,1.65) and (-0.15,1.9) .. (0.4,2.25);
    \draw (-2.75,0.75) .. controls (-2.35,0.45) and (-2.35,0.05) .. (-2.85,-0.7);
    \draw (-3.15,0.5) .. controls (-2.95,0.3) and (-2.9,0) .. (-3.2,-0.5);
    \draw (-3.6,0.25) .. controls (-3.45,0.1) and (-3.45,-0.05) .. (-3.55,-0.25);
    \draw (-2.15,-1.15) .. controls (-1.5,0) and (-1.3,0) .. (1.45,1.6);
    \draw (-1.8,-1.35) .. controls (-1.15,-0.3) and (-0.25,0.3) .. (1.7,1.4);
    \draw (-1.55,-1.55) .. controls (-0.7,-0.45) and (0.25,0.1) .. (2,1.25);
    \draw (-1.3,-1.7) .. controls (-0.25,-0.3) and (1.15,-0.05) .. (2.25,1.1);
    \draw (-0.75,-2) .. controls (0.6,-0.75) and (1,-0.85) .. (1.7,-1.45);
    \draw (-0.5,-2.2) .. controls (0.25,-1.4) and (0.95,-1.45) .. (1.2,-1.75);
    \draw (-0.25,-2.3) .. controls (0,-2) and (0.55,-1.9) .. (0.75,-2.05);
    \draw (2.8,0.75) .. controls (1.75,-0.35) and (1.75,-0.35) .. (2.45,-0.95);
    \draw (3.1,0.6) .. controls (2.35,-0.2) and (2.35,-0.35) .. (2.8,-0.75);
    \draw (3.4,0.4) .. controls (2.95,-0.1) and (2.95,-0.25) .. (3.15,-0.55);
    \draw (3.7,0.2) .. controls (3.5,0) and (3.4,-0.15) .. (3.5,-0.3);
    \draw[red, thick] (-0.5,0.85) .. controls (-0.4,0.3) and (0.4,0) .. (0.55,-0.6);
    \node[red] at (0.25,0.3) {$\gamma$};
    \node [blue] at (-0.35,-1.5) {$\alpha$};
    \node [blue] at (9.65,-1.5) {$\alpha$};
    \node (F) at (5,-2.5) {$(F, \mu)$};
    \draw [->] (F) -- (3.5,-1.5) node [blue] {};
    \draw [->] (F) -- (6.5,-1.5) node [blue] {};
    \end{scope}
    \end{tikzpicture}
    \caption{Left (resp. right) figure shows that the local model of the measured foliation $(F, \mu)$ whose shear coordinate at $\alpha$ is negative (resp. positive); left: $x^\tri_\alpha([F, \mu]) = + \int_\gamma \mu$, right: $x^\tri_\alpha([F, \mu]) = - \int_\gamma \mu$.}
    \label{f:shear}
\end{figure}
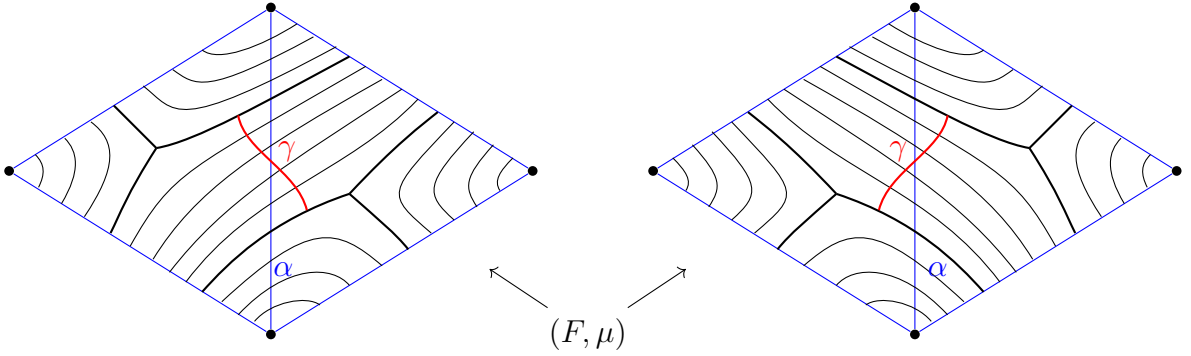
\end{ex}

\begin{ex}
As we saw in \cref{tab:deform_train_track}, the elementary moves of train tracks are corresponding to the binary relations arising from flips.
Thus, each elementary move corresponds to a domain of linearity of a tropicalized cluster transformation.
In \cref{fig:split,fig:shift}, the green letters mean the corresponding signs.
\end{ex}

The notion of sign-stability is the asymptotic stability property of the sign of a representation path of a mutation loop as iterating the mutation loop:

\begin{dfn}[sign stability]\label{d:sign stability}
Let $\gamma: v_0 \to \phi^{-1}(v_0)$ be a representation path of a mutation loop $\phi$.
Let $\Omega \subset \X_\bs(\bR^\trop)$ be a subset which is invariant under the rescaling action of $\bR_{> 0}$. 
Then we say that $\gamma$ is \emph{sign-stable} on $\Omega$ if there exists a sequence $\boldsymbol{\epsilon}^\stab_{\gamma,\Omega} \in \{+,-\}^{h(\gamma)}$ of strict signs such that for each $w \in \Omega \setminus \{0\}$, there exists an integer $n_0 \in \mathbb{N}$ such that   \[\boldsymbol{\epsilon}_\gamma(\phi^n(w)) = \boldsymbol{\epsilon}^\stab_{\gamma, \Omega} \]
for all $n \geq n_0$. 
We call $\boldsymbol{\epsilon}_{\gamma,\Omega}^\stab$ the \emph{stable sign} of $\gamma$ on $\Omega$.
\end{dfn}

\section{Train track splittings}\label{sec:TTsplit}

In this section, we give a concrete relationship between the sequence of train track splittings which represents a pseudo-Anosov mapping class and the stable sign of some representation path of the pseudo-Anosov mapping class. 

\subsection{Sign stability of generic pseudo-Anosov mapping classes}
First, we recall the concept of the pseudo-Anosov mapping classes.
The orientation preserving homeomorphism $f: \Sigma \to \Sigma$ on a surface $\Sigma$ is \emph{pseudo-Anosov} if there exists a pair $((F^+_f, \mu^+_f), (F^-_f, \mu^-_f))$ of measured foliations on $\Sigma$ satisfying the following:
\begin{itemize}
    \item for any pair consisting of the respective leaves of $F^+_f$ and $F^-_f$, the leaves are transversal each other if they intersect and
    \item there is a number $\lambda_f > 1$ such that the measured foliation $f_*(F^\pm_f, \mu^\pm_f)$ obtained by pushforward\footnote{
    For a measured foliation $(F, \mu)$ and a homeomorphism $f$, the pushforward $f_*(F, \mu)$ of $(F, \mu)$ by $f$ is the measured foliation $(f(F), f_* \mu)$, where $f_* \mu(\alpha) := \mu(f^{-1}(\alpha))$.} by $f$ is equal to $(F^\pm_f, \lambda_f^{\pm 1} \mu^\pm_f)$.
\end{itemize}
In particular, a pseudo-Anosov homeomorphism $f$ is \emph{generic} if the measured foliations $\fol^\pm_f$ have only 1- or 3-pronged singularities.
A mapping class is \emph{(generic) pseudo-Anosov} if it is represented by a (generic) pseudo-Anosov homeomorphism.

\begin{rmk}
By the first condition of the pair of the measured foliations $(F^+_f, F^-_f)$ of a pseudo-Anosov homoeomorphism $f$, the set of singularities of $F^+_f$ and $F^-_f$ are the same.
Also, the set of singularities of $F^+_f$ and $F^-_f$ does not necessarily contain the set of punctures.
In the space $\MF(\Sigma)$ of measured foliations, the degree of singularities is not well-defined since the Whitehead move does not preserve it but this data will be important for the sign stability.
\end{rmk}

First, we recall the main result of \cite{IK20a}:

\begin{thm}[{\cite[Theorem 7.1 and Remark 7.8]{IK20a}}]\label{thm:unifSS_genericpA}
Let $\phi$ be a mapping class of a punctured surface $\Sigma$.
Then $\phi$ is generic pseudo-Anosov if and only if any representation path $\gamma:(\tri,\ell) \to \phi^{-1}(\tri,\ell)$ in $\bTri(\Sigma)$ is sign-stable on $\bR_{>0} \cdot \cX_\Sigma(\bZ^\trop)$.
\end{thm}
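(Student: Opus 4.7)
The plan is to reduce the sign question to Thurston dynamics on $\MF(\Sigma)$ and then read each sign off the invariant train track using the atlas from Part I. Via the ensemble-map identification $\cU_\Sigma(\bR^\trop)\cong\MF(\Sigma)$ and the picture of \cref{ex:sign_geom}, the sign $\sgn(x^{(\tri)}_\alpha([F,\mu]))$ is $+$, $-$, or $0$ according to whether the singular leaves emanating into the two triangles adjacent to $\alpha$ pass $\alpha$ on one side, the other side, or meet at $\alpha$. Equivalently, by \cref{tab:deform_train_track}, this sign records the local type (II versus III, and at which corner) of any complete carrying train track inside the quadrangle around $\alpha$; in particular the sign is nonzero precisely when $[F,\mu]$ lies in the \emph{interior} of the corresponding chart $\cV(\tau)$.

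For the ``only if'' direction, suppose $\phi$ is generic pseudo-Anosov with stable foliation $\mathcal{F}^+_\phi=[F^+_\phi,\mu^+_\phi]$. Genericity forces every singularity to be $1$- or $3$-pronged, so by \cref{thm:subtrack}(1) $\mathcal{F}^+_\phi$ is carried by a \emph{complete} invariant train track $\tau^+_\phi$; higher-order prongs would leave a non-trigon complementary region and allow several distinct completions, destroying uniqueness. At each vertex $v_i=(\tri_i,\ell_i)$ of $\gamma$, genericity further implies $\mathcal{F}^+_\phi\in\interior\cV(\tau^+_{\phi,i})$ for some $\tau^+_{\phi,i}\in\TT^{\max}_{(\tri_i,\ell_i)}$ (obtained by carrying $\tau^+_\phi$ by a complete train track suited to $(\tri_i,\ell_i)$ as in \cref{thm:tt_atlas_MF}), so $\sgn(x^{(v_i)}_{k_i}(\mathcal{F}^+_\phi))\neq 0$, producing the candidate stable sign $\boldsymbol{\epsilon}^\stab_\gamma$ via \cref{tab:deform_train_track}. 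Thurston's north--south dynamics gives $[\phi^n(w)]\to[\mathcal{F}^+_\phi]$ in $\PMF(\Sigma)$ for every $w\neq[\mathcal{F}^-_\phi]$, and I would upgrade this, for $w\in\bR_{>0}\cdot\cX_\Sigma(\bZ^\trop)\setminus\{[\mathcal{F}^-_\phi]\}$, to the statement that $\phi^n(w)\in\interior\cV(\tau^+_{\phi,i})$ for $n\geq n_0$, using a Perron--Frobenius spectral-gap estimate on the transition matrix of $\tau^+_\phi$ together with the invariance $\phi\cdot\tau^+_\phi=\tau^+_\phi$. The sign--geometry dictionary then delivers $\boldsymbol{\epsilon}_\gamma(\phi^n(w))=\boldsymbol{\epsilon}^\stab_\gamma$.

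For the converse I argue contrapositively. If $\phi$ is reducible or of finite order, the Nielsen--Thurston classification provides integer multicurves whose orbits do not projectively converge, so some $\epsilon_i$ oscillates. If $\phi$ is pseudo-Anosov but some singularity of $\mathcal{F}^+_\phi$ has prong order $\geq 5$, then $[\mathcal{F}^+_\phi]$ sits on a shared boundary face of several maximal cones $\cV(\tau)$, $\tau\in\TT^{\max}_{\tri_i}$, in the fan of \cref{cor:TT_fan} (the $k$-gon with $k\geq 5$ admits multiple inequivalent splittings to trigons and once-punctured monogons). Approaching this face along the two sides through $\bR_{>0}\cdot\cX_\Sigma(\bZ^\trop)$ — possible by density of integer points — yields two orbits converging projectively to $[\mathcal{F}^+_\phi]$ but with opposite signs at the flip crossing the shared wall, contradicting sign stability.

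The hardest step is the quantitative upgrade in the ``only if'' direction: turning projective convergence into interior membership in the correct top-dimensional chart, uniformly across integer points and across the finitely many steps of $\gamma$. A priori an integer orbit could land on a rational wall $\sfa_\alpha+\sfa_\beta=\sfa_\gamma$ of the train track fan along a subsequence $n_j\to\infty$, forcing $\epsilon_{i(j)}=0$ infinitely often. Ruling this out will require combining the spectral gap of $\tau^+_\phi$'s transition matrix (decay of the $\mathcal{F}^-_\phi$-component at rate $\lambda_\phi^{-n}$) with the irrationality of the coordinates of $[\mathcal{F}^+_\phi]$ — they generate the Perron number field $\bQ(\lambda_\phi)$ — so that the rescaled integer orbit cannot remain on a $\bQ$-linear hyperplane transverse to $[\mathcal{F}^+_\phi]$ past a finite stage.
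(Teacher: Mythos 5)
This theorem is cited from [IK20a] and is not proved in the paper at hand, so I cannot compare you directly against the paper's own argument. Judging your proposal on its own terms, the skeleton (sign--geometry dictionary $+$ north--south dynamics $+$ invariant track) is plausibly in the spirit of the source, but I see several genuine gaps.

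First, the most serious one: your argument is carried out entirely on $\MF(\Sigma) \cong \cU_\Sigma(\bR^\trop)$, but the sign $\boldsymbol{\epsilon}_\gamma$ and the stability domain $\Omega=\bR_{>0}\cdot\cX_\Sigma(\bZ^\trop)$ live on $\cX_\Sigma(\bR^\trop)\cong\widehat{\MF}(\Sigma)$, which is strictly larger; $\cU_\Sigma(\bR^\trop)=p_\Sigma(\cA_\Sigma(\bR^\trop))$ is a proper PL subspace of it since the tropical ensemble map has nontrivial cokernel for punctured surfaces. The integral points of $\cX_\Sigma$ include laminations with nonzero peripheral (spiralling) data that are not in the image of $p_\Sigma$, and for these your dictionary (\cref{ex:sign_geom}) as you've stated it does not apply. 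You would need to show the peripheral component's growth under $\phi^n$ is dominated by the $\lambda_\phi$-direction and that the resulting projective limit still sits in $\interior\cC^{\boldsymbol{\epsilon}^{\stab}}_\gamma$; you don't address this.

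Second, you are also conflating two distinct cone decompositions: membership of $[F,\mu]$ in $\interior\cV(\tau)$ for $\tau$ the appropriate complete train track versus membership in $\interior\cC^{\boldsymbol{\epsilon}}_\gamma$. The first involves positivity of \emph{all} branch weights; the second involves the signs of the particular intermediate $\cX$-coordinates $x^{(v_i)}_{k_i}$ along $\gamma$. These are related through \cref{tab:deform_train_track} for a single flip, but the global identity $\cC^\stab_\gamma\cap\cU_\Sigma(\bR^\trop)=p_\Sigma(\cV(\tau^+_\phi))$ is stated as an open conjecture in \cref{subsec:bdd_SS}, so it cannot be taken for granted. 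Third, your ``hardest step'' is misdiagnosed: once you know $[\mathcal{F}^+_\phi]\in\interior\cC^{\boldsymbol{\epsilon}^{\stab}}_\gamma$ (an open set), projective convergence of $\phi^n(w)$ to $[\mathcal{F}^+_\phi]$ already forces $\phi^n(w)\in\interior\cC^{\boldsymbol{\epsilon}^{\stab}}_\gamma$ for all large $n$; no irrationality or spectral-gap estimate is needed. The genuine work in the ``only if'' direction is showing the candidate sign $\boldsymbol{\epsilon}_\gamma([\mathcal{F}^+_\phi])$ is strict at \emph{every} intermediate vertex of \emph{every} representation path --- something you only sketch and where the genericity hypothesis is actually exercised. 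Finally, the reducible/finite-order branch of the converse is waved away; as written it does not even rule out the trivial representation path (which is vacuously sign-stable for any finite-order class), and a careful treatment of spectral radius $1$ versus $>1$ seems unavoidable.
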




For a representation path $\gamma$ of a pseudo-Anosov mapping class $\phi$, we write simply $\bep_\gamma^\stab$ for its stable sign on $\bR_{>0} \cdot \cX_\Sigma(\bZ^\trop)$.

\subsection{Sign stability of general pseudo-Anosov mapping classes via train track splittings}

Here, we discuss the sign stability of a ``general'' pseudo-Anosov mapping class.
The following theorem plays a key role in this subsection:

\begin{thm}[{\cite[Theorem 2.4.1]{PH}}]\label{thm:split_seq}
If birecurrent train tracks $\tau$ and $\tau'$ are satisfying $\tau \succ \tau'$, then for each transverse measures $\nu \in \relint V(\tau)$ and $\nu' \in \relint V(\tau')$ such that they correspond to the same arational point $\fol \in \MF(\Sigma)$, there is a sequence of train tracks $\tau = \tau_0 \succ \tau_1 \succ \cdots \succ \tau_h = \tau'$ such that $\tau_{t} \succ \tau_{t+1}$ is a splitting or a shifting with a transition matrix $M_i$ for each $t=0, \dots, h-1$, and satisfy $M_{h-1} \cdots M_1 M_0 \nu = \nu'$.
\end{thm}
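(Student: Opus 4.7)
The plan is to build the sequence inductively by a greedy splitting procedure driven by the relative position of $\tau'$ inside a fibered neighborhood of $\tau$. Set $(\tau_0,\nu_0):=(\tau,\nu)$. Inductively, suppose at stage $t$ we have $\tau' \prec \tau_t \prec \tau$ and a measure $\nu_t \in \relint V(\tau_t)$ that represents $\fol$. If $\tau_t$ differs from $\tau'$ only by a finite sequence of shifts, I would append those shifts and halt. Otherwise, I would pick a branch $b_t \in B(\tau_t)$ at which the image of a representative of $\tau'$ inside $N_{\tau_t}$ has not yet been ``peeled off'' from $\tau_t$, and perform the unique elementary splitting (left, right, or central, as in \cref{fig:split}) at $b_t$ that is compatible with the embedded position of $\tau'$, so that still $\tau' \prec \tau_{t+1}$. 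Let $M_t$ be the transition matrix of the resulting carrying $\tau_{t+1} \prec \tau_t$ from \cref{lem:trans_mat_elem}, and set $\nu_{t+1}:=M_t\nu_t$; then $\nu_{t+1}\in V(\tau_{t+1})$ still represents $\fol$, and the induction continues.

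The principal obstacle is termination, and this is where the arationality of $\fol$ enters essentially. The plan is to introduce a nonnegative integer invariant $N(\tau_t,\tau')$ that strictly decreases under each splitting, for example the sum of the entries of the transition matrix realising the carrying $\tau' \prec \tau_t$, or equivalently the total count of transverse crossings of a representative of $\tau'$ with the ties of $N_{\tau_t}$ over the chosen splittable branches. A single splitting at $b_t$ then removes at least one such crossing. If $\fol$ had a simple closed leaf, or more generally a non-arational component, one could manufacture an infinite cycle of shifts and ``null'' splittings that keep $N$ constant; arationality rules this out and guarantees that after finitely many steps $\tau_t$ is related to $\tau'$ only by shifts.

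The final identity $M_{h-1}\cdots M_0\,\nu = \nu'$ is then automatic by construction: each $M_t$ is defined precisely so that $\psi_{\tau_t}\circ M_t^{-1}=\psi_{\tau_{t+1}}$, so the composite carries $\nu$ to a point of $\relint V(\tau')$ representing $\fol$; by injectivity of $\psi_{\tau'}$ (\cite[Theorem 2.7.4]{PH}, recalled in \cref{subsec:traintrack}) this point must be $\nu'$. The delicate and substantive step is therefore the termination argument together with the precise invariant that encodes arationality; everything else reduces to systematic bookkeeping around the elementary transition matrices of \cref{lem:trans_mat_elem}.
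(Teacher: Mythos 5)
Note first that the paper does not prove this statement: it is imported directly from \cite[Theorem 2.4.1]{PH}, and the text that follows it in \cref{sec:TTsplit} is only a recollection of the \emph{unzipping} construction of the RLS-word (along singular leaves of a full-support representative $(F,\mu)$ of $\fol$, \emph{cf.} \cite[Construction 1.7.7]{PH}), recalled for use in the proof of \cref{lem:unizip_path}. So there is no in-paper proof to compare your sketch against; I assess it on its own terms.

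Your architecture --- an inductive splitting procedure driven by the carried image of $\tau'$ in $N_{\tau_t}$, transition matrices from \cref{lem:trans_mat_elem}, and the closing identity $M_{h-1}\cdots M_0\nu=\nu'$ deduced from injectivity of $\psi_{\tau'}$ --- is a reasonable shape, and is compatible with the unzipping picture the paper recalls. But the termination step, which you correctly flag as the substantive one, is not supplied, and this is a genuine gap rather than bookkeeping. Three specific problems. First, your proposed monovariant (entries of the transition matrix for $\tau'\prec\tau_t$, equivalently crossings of $\tau'$ with ties) is invariant under shifts, so you must separately exclude unbounded shift sequences; you do not address this. Second, the claim that ``a single splitting at $b_t$ removes at least one such crossing'' is asserted, not argued, and its truth depends on how the splitting type (left/right/central, \cref{fig:split}) is selected --- a choice you also leave unspecified; a central splitting in particular needs separate treatment. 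Third, the appeal to arationality is heuristic: describing a hypothetical ``infinite cycle of shifts and null splittings'' in the non-arational case is not a proof that arationality forces termination. The actual mechanism, which your sketch would need to reproduce, is that arationality lets one choose $(F,\mu)$ with $|F|=N_\tau$ whose singular leaves, parametrized from the cusps of $N_\tau$, reach a cusp tie of $N_{\tau'}$ after finite combinatorial length, and this finiteness is precisely the nontrivial content of \cite[Theorem 2.4.1]{PH}. Finally, you implicitly maintain $\nu_t\in\relint V(\tau_t)$ at every stage, which is needed for the unzipping picture to apply at stage $t$ but is never verified.
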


This sequence of train tracks is called \emph{RLS-word} of $\fol \in \MF(\Sigma)$ in \cite{PP87}.
For later discussions, we recall the construction of the sequence of splittings and shiftings likes as \cref{thm:split_seq}.

By the assumption $\tau \succ \tau'$, we can deform $\tau'$ by a homotopy such that $\tau' \subset \interior N_\tau$ and $\tau$ is transverse to the ties of $N_\tau$.
Furthermore, we can take a fibered neighborhood $N_{\tau'}$ of $\tau'$ such that $N_{\tau'} \subset \interior N_\tau$ and each tie of $N_{\tau'}$ is a restriction of some tie of $N_\tau$.
Since $\fol \in \cV(\tau)$, we can take a partial measured foliation $(F, \mu)$ such that $|F| \subset N_\tau$ and it is transverse to the ties.
In particular, we can take the partial measured foliation $(F, \mu)$ satisfying $|F| = N_\tau$ since $\fol \in \relint \cV(\tau)$ (\emph{cf.}, \cite[Construction 1.7.7]{PH}).
Let us consider a singular leaf $\varsigma$ of $F$ which is starting from a cusp $s$ of $N_\tau$ with a parametrization $\alpha: [0, \infty) \to \varsigma$ and the set $T$ of the ties of $N_\tau$ which contain some cusp of $N_{\tau'}$.
By some argument, the cusp $s'$ of $N_{\tau'}$ such that it is contained in the tie $t \in T$ that the leaf $\varsigma$ hits first is opened in the same direction as the cusp $s$, see \cref{fig:rel_fib_nbd}.
For $u \in [0, \infty)$ such that $\alpha(u) \in t$, we put $N_1 := N_\tau \setminus \alpha([0, u])$ and let $\tau_1$ be a train track obtained by collapsing to a point each tie of $N_1$.
This procedure is called an \emph{unzipping} along a singular leaf $\varsigma$.
It is known that there is a sequence of unzipping from $\tau$ to $\tau'$, and one can verify easily that an unzipping is splitting or shifting.
So by unzipping, we obtain the sequence consisting of splittings and shiftings, which is the desired sequence.

\begin{figure}[h]
    \centering
    \begin{tikzpicture}[scale=1.5]
    \begin{scope}[xscale=-1]
    \draw [blue, thick](-1.5,-0.4) .. controls (0,-0.4) and (0.5,-0.4) .. (1.5,-0.9);
    \draw [blue, thick](0.5,0.15) .. controls (0.9,0.1) and (1.15,0.15) .. (1.5,0.25);
    \draw [blue, thick](0.5,0.15) .. controls (0.9,0.05) and (1.15,-0.05) .. (1.5,-0.25);
    \draw [blue](-1.5,0.95) -- (-1.5,-0.4);
    \draw [blue](-1.3,0.95) -- (-1.3,-0.4);
    \draw [blue](-1.1,1) -- (-1.1,-0.4);
    \draw [blue](-0.9,1) -- (-0.9,-0.4);
    \draw [blue](0.5,1.15) -- (0.5,-0.5);
    \draw [blue] (0.7,1.15) -- (0.7,0.1);
    \draw [blue] (0.7,0.1) -- (0.7,-0.55);
    \draw [blue](1.5,0.25) -- (1.5,1.45);
    \draw [blue](1.5,-0.25) -- (1.5,-0.9);
    \draw [blue](-0.7,1) -- (-0.7,-0.4);
    \draw [blue](0.1,1.05) -- (0.1,-0.45);
    \draw [blue](0.3,1.08) -- (0.3,-0.48);
    \draw [blue](0.9,1.2) -- (0.9,0.11);
    \draw [blue](0.9,0.02) -- (0.9,-0.65);
    \draw [blue](1.1,1.25) -- (1.1,0.15);
    \draw [blue](1.1,-0.05) -- (1.1,-0.7) -- cycle;
    \draw [blue](1.3,1.35) -- (1.3,0.2);
    \draw [blue](1.3,-0.15) -- (1.3,-0.8);
    \node [blue] at (-0.35,1.4) {$N_\tau$};
    \end{scope}
    \draw [red, thick](-0.5,0.15) .. controls (0,0.2) and (0.7,0.2) .. (1.5,0.25);
    \draw [violet, thick](-1.5,1) .. controls (-0.5,0.5) and (0,0.45) .. (1.5,0.45);
    \draw [violet, thick](-1.5,-0.65) .. controls (-0.5,-0.15) and (0,-0.05) .. (1.5,0.1);
    \draw [violet, thick](-1.5,0.65) .. controls (-0.5,0.25) and (0.05,0.2) .. (0.5,0.2);
    \draw [violet, thick](-1.5,-0.45) .. controls (-0.85,-0.1) and (0.05,0.2) .. (0.5,0.2);
    \draw [violet](-1.5,1);
    \draw [violet](-1.4,-0.4) -- (-1.4,-0.6);
    \draw [violet](-1.2,0.85) -- (-1.2,0.55);
    \draw [violet](-1,0.75) -- (-1,0.45);
    \draw [violet](-0.8,0.7) -- (-0.8,0.4);
    \draw [violet](-0.6,0.65) -- (-0.6,0.35);
    \draw [violet](-0.4,0.6) -- (-0.4,0.3);
    \draw [violet](-0.2,0.55) -- (-0.2,0.25);
    \draw [violet](0,0.5) -- (0,0.25);
    \draw [violet](0.2,0.5) -- (0.2,0.2);
    \draw [violet](-1.2,-0.3) -- (-1.2,-0.5);
    \draw [violet](-1,-0.2) -- (-1,-0.4);
    \draw [violet](-0.8,-0.1) -- (-0.8,-0.35);
    \draw [violet](-0.6,-0.05) -- (-0.6,-0.25);
    \draw [violet](-0.4,0) -- (-0.4,-0.2);
    \draw [violet](-0.2,0.1) -- (-0.2,-0.15);
    \draw [violet](0,0.15) -- (0,-0.1);
    \draw [violet](0.2,0.15) -- (0.2,-0.05);
    \draw [violet](0.4,0.45) -- (0.4,-0.05);
    \draw [violet](0.6,0.45) -- (0.6,0);
    \draw [violet](0.8,0.45) -- (0.8,0.05);
    \draw [violet](1,0.45) -- (1,0.05);
    \draw [violet](1.2,0.45) -- (1.2,0.05);
    \draw [violet](1.4,0.45) -- (1.4,0.1);
    \draw [violet](-1.4,0.95) -- (-1.4,0.6);
    \draw [->](-1.8,0.5) .. controls (-1.5,0.5) and (-0.75,0.25) .. (-0.55,0.15);
    \draw [->](1,1.15) .. controls (1,0.75) and (0.9,0.3) .. (0.55,0.2);
    \node at (-1.95,0.5) {$s$};
    \node at (1.05,1.4) {$s'$};
    \node[violet] at (0,-0.7) {$N_{\tau'}$};
    \draw [blue, thick] (-1.5,1.45) .. controls (-1,1.15) and (0.05,1) .. (1.5,0.95);
    \draw [blue](0.5,1) .. controls (0.5,0.7) and (0.5,0.2) .. (0.5,-0.4);
    \draw [blue](0.1,1.05) .. controls (0.1,0.95) and (0.1,0.2) .. (0.1,-0.45);
    \draw [blue](0.3,1) .. controls (0.3,0.85) and (0.3,0.1) .. (0.3,-0.4);    
    \node[red] at (1.65,0.25) {$\varsigma$};
    \end{tikzpicture}
    \caption{The relative positions of the fibered neighbourhoods $N_\tau$ and $N_{\tau'}$.}
    \label{fig:rel_fib_nbd}
\end{figure}
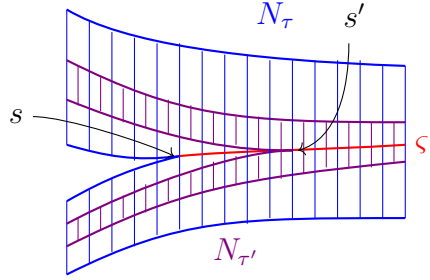


\begin{thm}[{\cite{BH95}}]\label{thm:BH}
For a pseudo-Anosov mapping class $\phi \in MC(\Sigma)$ with an unstable measured foliation $(F^+_\phi, \mu^+_\phi)$, there is a birrecurent train track $\tau_\phi^+$ such that $[F^+_\phi, \mu^+_\phi] \in \relint \cV(\tau^+_\phi)$ and $\phi(\tau^+_\phi) \prec \tau^+_\phi$.
\end{thm}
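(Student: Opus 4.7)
The plan is to produce $\tau_\phi^+$ by fixing a well-chosen initial track, iterating $\phi^{-1}$, and using a pigeonhole argument on the finite combinatorics developed in \cref{part:TT_CA}. The strategy dovetails with the train track atlas (\cref{thm:tt_atlas_MF}) and the splitting technology (\cref{thm:split_seq}).

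First, I would choose a complete train track $\tau_0 \in \TT^{\max}_\tri$ for some ideal triangulation $\tri$ with $\fol^+_\phi := [F^+_\phi, \mu^+_\phi] \in \interior \cV(\tau_0)$. By \cref{lem:open_cov_TT}, the union of the interiors $\interior \cV(\tau)$ over all $\tau \in \TT^{\max}_{\tri'}$ and all $\tri' \in \Tri(\Sigma)$ exhausts $\MF(\Sigma) \setminus \{\emptyset, \text{peripheral}\}$, and $\fol^+_\phi$ lies in this set because it is arational (no closed leaves after Whitehead collapses). Such a $\tau_0$ is automatically birecurrent: it is recurrent because $\fol^+_\phi$ gives a transverse measure that is strictly positive on every branch, and transversely recurrent because it is suited to $\tri$ (as noted in the remark following \cref{def:traintrack}).

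Second, I would track the orbit under $\phi^{-1}$. Since $\phi_*(F^+_\phi, \mu^+_\phi) = (F^+_\phi, \lambda_\phi \mu^+_\phi)$, the mapping class $\phi^{-1}$ fixes $\fol^+_\phi$ projectively, so each $\tau_n := \phi^{-n}(\tau_0)$ is again a complete track, suited to $\phi^{-n}(\tri)$, with $\fol^+_\phi \in \interior \cV(\tau_n)$. The crucial geometric input is that $\phi^{-1}$ contracts the transverse measure on $F^+_\phi$ by $\lambda_\phi^{-1} < 1$; consequently, after pushing $\tau_n$ into a fibered neighbourhood $N_{\tau_0}$ via the measured foliation representative, the tie-coordinates along the unstable leaves become strictly smaller as $n$ grows. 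For $n$ large enough, this forces $\tau_n \prec \tau_0$ in the sense of the carrying relation.

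Third, I would invoke \cref{thm:split_seq} on the carrying $\tau_0 \succ \tau_n$ (for such $n$) to obtain a finite RLS-word of splittings and shiftings from $\tau_0$ down to $\tau_n$. Now the pigeonhole step: the set $\bigsqcup_{\tri \in \Tri(\Sigma)} \TT^{\max}_\tri$ is countable, but modulo the action of $MC(\Sigma)$ it is finite (there are finitely many $MC(\Sigma)$-orbits of ideal triangulations, and finitely many maximal tracks suited to each). Hence the sequence $(\tau_n)_{n\ge 0}$ falls into finitely many $MC(\Sigma)$-orbits, and some $n < n'$ satisfy $\tau_{n'} = \psi(\tau_n)$ for some $\psi \in MC(\Sigma)$. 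Composing with $\phi^{n'-n}$ yields a mapping class that preserves the isotopy class of $\tau_n$ up to carrying, and setting $\tau^+_\phi := \tau_n$ delivers the invariance $\phi^r(\tau^+_\phi) \prec \tau^+_\phi$ for some $r\ge 1$. Passing from a power back to $\phi$ itself uses that $\phi$ must preserve the maximal cone $\cV(\tau^+_\phi)$ containing $\fol^+_\phi$ in its interior (uniqueness by \cref{cor:TT_fan}), hence $\phi(\tau^+_\phi)$ and $\tau^+_\phi$ carry the same subset of $\MF(\Sigma)$, forcing $\phi(\tau^+_\phi) \prec \tau^+_\phi$.

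The main obstacle is the second step — verifying that $\tau_n \prec \tau_0$ holds for all sufficiently large $n$. A priori the $N_{\tau_n}$ could wander unpredictably in $\Sigma$ under iterated $\phi^{-1}$; what saves the argument is the uniform expansion factor $\lambda_\phi$ on the unstable foliation, which ensures the ``relative size'' of $\tau_n$ inside $\tau_0$ shrinks monotonically along the unstable direction. Making this quantitative probably requires an auxiliary Markov partition argument or explicit bounds on the transition matrices of \cref{lem:trans_mat_elem}, and this is where the bulk of the technical work in \cite{BH95} lies.
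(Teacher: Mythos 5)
Your approach is fundamentally different from the paper's, and it has genuine gaps. The paper's proof is deliberately short: it simply cites \cite{BH95} for the existence of a train track $\tau^+_\phi$ with $\phi(\tau^+_\phi) \prec \tau^+_\phi$, and then patches up the \emph{birecurrency} claim by discarding branches with zero $\nu^+_\phi$-measure (to force recurrency) and appealing to recurrency of the dual track (to force transverse recurrency). You, by contrast, attempt to re-derive the existence of an invariant track from scratch via a pigeonhole argument, which is essentially trying to reprove a theorem of Bestvina--Handel in a page; the effort does not succeed for the following reasons.

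The pigeonhole step is logically vacuous. From finitely many $MC(\Sigma)$-orbits in $\bigsqcup_\tri \TT^{\max}_\tri$ you conclude that some $n<n'$ have $\tau_{n'}=\psi(\tau_n)$ for a $\psi\in MC(\Sigma)$. But this is always true with $\psi=\phi^{-(n'-n)}$, since $\tau_{n'}=\phi^{-(n'-n)}(\tau_n)$ by construction; the pigeonhole tells you nothing new. To extract invariance you would need $\psi$ to be trivial or at least to be a power of $\phi$, which the pigeonhole does not supply, and the subsequent sentence ``Composing with $\phi^{n'-n}$ yields a mapping class that preserves the isotopy class of $\tau_n$ up to carrying'' is simply a restatement of the trivial identity $\phi^{n'-n}(\tau_{n'})=\tau_n$. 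Your second step (establishing $\tau_n \prec \tau_0$ for $n$ large) is also not argued: that contraction of transverse measure along $F^+_\phi$ forces a carrying relation is exactly the nontrivial geometric input that the Bestvina--Handel construction supplies, and you flag this yourself. Finally, the descent from a power $\phi^r$ to $\phi$ itself is incorrect: \cref{cor:TT_fan} gives uniqueness of the maximal cone containing $\fol^+_\phi$ only among tracks suited to a \emph{fixed} ideal triangulation $\tri$, whereas $\phi(\tau^+_\phi)$ is suited to $\phi(\tri)\neq\tri$; moreover, $\phi$ does not preserve $\cV(\tau^+_\phi)$ --- it carries it to $\cV(\phi(\tau^+_\phi))$, and two cones both containing $\fol^+_\phi$ in their interiors need not be nested nor equal.
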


The train track which satisfies this property is called \emph{invariant track} of $\phi$.

\begin{proof}
The existence of the train track $\tau^+_\phi$ satisfying $\phi(\tau^+_\phi) \prec \tau^+_\phi$ for a pseudo-Anosov mapping class $\phi$ is proven in \cite{BH95} but the birrecurency is not clear.
If $\tau^+_\phi$ is not recurrent, then some branches $b \in B(\tau^+_\phi)$ satisfy $\nu^+_\phi(b) = 0$, where $\nu^+_\phi$ is a measure of $\tau^+_\phi$ corresponding to $[F^+_\phi, \mu^+_\phi]$.
Thus in this case, we replace $\tau^+_\phi$ with the subtrack of it obtained by cutting off such branches.
The transverse recurrency of $\tau^+_\phi$ follows from the recurrency of the dual train track of $\tau^+_\phi$ (see \cite[Section 3.4]{PH}).
\end{proof}

Combining \cref{thm:split_seq,thm:BH}, we know that there is a sequence of splittings and shiftings
\begin{align}\label{eq:seq_split_inv_track}
    \tau^+_\phi = \tau_0 \succ \tau_1 \succ \cdots \succ \tau_h = \phi(\tau^+_\phi)
\end{align}
for a pseudo-Anosov mapping class $\phi$ but the invariant track $\tau^+_\phi$ is not suited to an ideal triangulation in general.
Therefore, we have to deform the invariant track to make it so.

First, we recall the construction of the invariant tracks briefly.
The invariant track $\tau^+_\phi$ of a pseudo-Anosov mapping class $\phi$ is obtained from a graph $G$ which is homotopic to $\Sigma \setminus P$ and a graph map $f: G \to G$ induced by $\phi$ called \emph{train track map} as follows:
For each vertex $v$ of $G$, we define an equivalence class on the set $E(v)$ of edges originating at $v$ as $e_1 \sim e_2$ for $e_1, e_2 \in E(v)$ if $f^r(e_1)$ and $f^r(e_2)$ have a nontrivial common initial segment for some $r>0$.
The equivalence class is called the \emph{gate} at $v$.
To each vertex $v$ of $G$, we assign a small disk with marked points on the boundary which are labeled by the gates at $v$.
If two edges $e_1, e_2 \in E(v)$ satisfy $e_1 \not \sim e_2$ and there is an edge $e$ and $r>0$ such that $f^r(e)$ contains $e_2 e_1$ or $e_1 e_2$ as a subpath, then we connect the marked points of the disk labeled by the gates containing $e_1$ and $e_2$ with an edge called \emph{infinitesimal branch}.
Eventually, an infinitesimal $k$-gon or an infinitesimal $k$-gon missing one side is drawn in each small disk (see \cref{fig:inf_k-gon}).
Finally, for each edge $e$, we put the edge connecting the gates which contain $e$, and smoothing the resulting graph as each gate is a switch as the unique tangent vector is transversal to the boundary of the small disk.

\begin{figure}[h]
    \centering
    \begin{tikzpicture}[scale=.8]
    \draw[dashed, fill=gray!30]  (-1.5,-0.5) ellipse (1.2 and 1.2);
    \draw[red, very thick] (-1.5,0.7) .. controls (-1.5,-0.2) and (-1.8,-0.45) .. (-2.65,-0.1) .. controls (-1.8,-0.45) and (-1.65,-0.75) .. (-2.2,-1.45) .. controls (-1.65,-0.75) and (-1.35,-0.75) .. (-0.75,-1.45) .. controls (-1.35,-0.75) and (-1.2,-0.45) .. (-0.4,-0.1) .. controls (-1.2,-0.45) and (-1.5,-0.2) .. (-1.5,0.7);
    \draw[dashed, fill=gray!30]  (3,-0.5) ellipse (1.2 and 1.2);
    \draw[red, very thick] (3,0.7) .. controls (3,-0.1) and (2.65,-0.3) .. (2,0.1) .. controls (2.65,-0.3) and (2.65,-0.65) .. (2,-1.25) .. controls (2.65,-0.65) and (3,-0.85) .. (3,-1.7) .. controls (3,-0.85) and (3.35,-0.65) .. (3.95,-1.3) .. controls (3.35,-0.65) and (3.35,-0.3) .. (4.1,0);
    \end{tikzpicture}
    \caption{Left: infinitesimal pentagon, right: infinitesimal hexagon missing one-side.}
    \label{fig:inf_k-gon}
\end{figure}
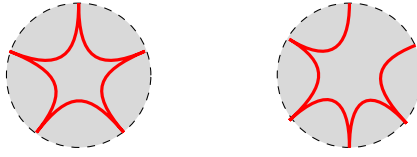

Next, we deform the invariant track to be suited to some ideal triangulation.
Obviously, the train tracks which are suited to ideal triangulations do not have any infinitesimal $(k>3)$-gons, so we have to delete them by sticking some infinitesimal branches in a $(k>3)$-gon.
By this deformation, each infinitesimal $(k>3)$-gon splits into connected $(k-2)$ trigons.
Concretely, to do this deformation, we take the following process:
For the invariant track $\tau = \tau_\phi^+$, take an ideal polygon decomposition $\tri'(\tau)$ which is dual to the graph $G$ which we used to make $\tau$.
Around infinitesimal $k$-gon of $\tau$, $\tri'(\tau)$ forms ideal $k$-gon.
Taking an ideal triangulation $\tri = \tri(\tau)$ containing $\tri'(\tau)$ and take a train track $s_\tri(\tau)$ such that it is
\begin{itemize}
    \item suited to $\tri$,
    \item the same with $\tau$ out of the ideal $(k>3)$-gons in $\tri$ and
    \item of type III in each ideal triangle of $\tri$ which is contained in the ideal $(k>3)$-gon of $\tri'(\tau)$.
\end{itemize}
Then, the cone $\cV(\tau)$ is a subset of the cone $\cV(s_\tri(\tau))$.

\begin{figure}[h]
    \centering
    \begin{tikzpicture}
    \draw[red, very thick] (-1.5,0.9) .. controls (-1.5,-0.2) and (-1.8,-0.45) .. (-2.85,-0.05) .. controls (-1.8,-0.45) and (-1.65,-0.75) .. (-2.3,-1.6) .. controls (-1.65,-0.75) and (-1.35,-0.75) .. (-0.65,-1.6) .. controls (-1.35,-0.75) and (-1.2,-0.45) .. (-0.2,-0.05) .. controls (-1.2,-0.45) and (-1.5,-0.2) .. (-1.5,0.9);
    \draw[blue] (-0.6,0.75) coordinate (v1) -- (-2.4,0.75) -- (-2.95,-0.95) -- (-1.5,-2.05) -- (0,-0.95) -- (v1);
	\node[red] at (-3.1,0.35) {$\tau$};
	\node[blue] at (-3.2,-1.65) {$\triangle'(\tau)$};
    
    \draw [ultra thick,-{Classical TikZ Rightarrow[length=4pt]},decorate,decoration={snake,amplitude=1.5pt,pre length=2pt,post length=3pt}](0.55,-0.5) -- (1.95,-0.5);
    
    \draw[red, very thick] (4.2,0.9) .. controls (4.2,-1.2) and (3.4,-0.7) .. (2.85,-0.05) .. controls (3.4,-0.7) and (3.4,-0.75) .. (3.3,-1.6) .. controls (3.35,-0.1) and (5,-0.1) .. (5.15,-1.6) .. controls (5,-0.75) and (5,-0.7) .. (5.5,-0.05) .. controls (5,-0.7) and (4.2,-1.2) .. (4.2,0.9);
    \draw[blue] (5.1,0.75) coordinate (v1) -- (3.3,0.75) coordinate (v2) -- (2.75,-0.95) -- (4.2,-2.05) coordinate (v3) -- (5.7,-0.95) -- (v1);
    \draw [blue] (v2) edge (v3);
    \draw [blue] (v1) edge (v3);
    \node[red] at (6,0.35) {$s_\triangle(\tau$)};
	\node[blue] at (5.85,-1.45) {$\triangle$};
    \end{tikzpicture}
    \caption{Deformation of the infinitesimal pentagon to 3 infinitesimal triangles}
    \label{fig:my_label}
\end{figure}
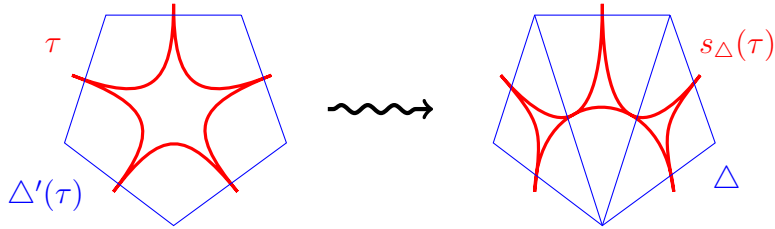



Let $\phi$ be a pseudo-Anosov mapping class of a punctured surface $\Sigma$.

\begin{lem}\label{lem:unizip_path}
There is a horizontal path $\gamma: (\tri, \ell) \to (\tri', \ell')$ in $\Tri(\Sigma)$ such that 
\begin{itemize}
    \item $\tri$ and $\tri'$ contain the ideal polygon decompositions $\tri'(\tau^+_\phi)$ and $\tri'(\phi(\tau^+_\phi))$ respectively, and
    \item the sign $\bep^+_\gamma = \bep_\gamma([F^+_\phi, \mu^+_\phi])$ is strict, namely, $\bep^+_\gamma \in \{+, -\}^{h(\gamma)}$.
\end{itemize}
\end{lem}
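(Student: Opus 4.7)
The plan is to convert the unzipping sequence of the invariant track into a horizontal path of ideal triangulations. Starting from the measure $\nu^+_\phi \in \relint V(\tau^+_\phi)$ representing $[F^+_\phi, \mu^+_\phi]$ and its image under $\phi$, \cref{thm:split_seq} applied to $\tau^+_\phi \succ \phi(\tau^+_\phi)$ produces a sequence
\[
\tau^+_\phi = \tau_0 \succ \tau_1 \succ \cdots \succ \tau_h = \phi(\tau^+_\phi)
\]
of splittings and shiftings. Since $\tau^+_\phi$ need not be suited to any ideal triangulation (its infinitesimal polygons may have more than three sides), I replace each $\tau_i$ by the complete suited deformation $s_{\tri_i}(\tau_i)$ introduced after \cref{thm:BH}, where $\tri_i$ is an ideal triangulation refining the polygon decomposition $\tri'(\tau_i)$. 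Because $\cV(\tau_i) \subset \cV(s_{\tri_i}(\tau_i))$, the foliation $[F^+_\phi, \mu^+_\phi]$ remains carried by each $s_{\tri_i}(\tau_i)$, and by a coherent choice of the $\tri_i$ one may arrange $\tri_0 \supset \tri'(\tau^+_\phi)$ and $\tri_h \supset \tri'(\phi(\tau^+_\phi))$.

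The next step is to realize each elementary move $\tau_i \to \tau_{i+1}$ by a sequence of flips of ideal triangulations so that $s_{\tri_i}(\tau_i)$ and $s_{\tri_{i+1}}(\tau_{i+1})$ are related by the binary relations $\lambda_k$ of \cref{thm:birel_TT}. A splitting or shifting is localized in a single branch, hence in a quadrilateral of $\tri_i$ after, if necessary, inserting preparatory flips inside the relevant infinitesimal polygon. \cref{tab:deform_train_track} then exhibits each combinatorial type of elementary move as a chain of $\lambda_k$-related train tracks across such a flip; concatenating the resulting flips over $i = 0, \dots, h-1$ and picking compatible labels yields the desired horizontal path $\gamma: (\tri, \ell) \to (\tri', \ell')$ in $\bTri(\Sigma)$.

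To see that $\bep^+_\gamma$ is strict, recall from the geometric description of signs (\cref{ex:sign_geom}) that a sign at the flipped arc $\alpha$ vanishes precisely when the transverse $\mu^+_\phi$-measures of the two singular leaves across $\alpha$ cancel; by \cref{lem:trans_mat_elem} together with \cref{fig:split,fig:shift}, this is exactly the central-splitting case. Since $[F^+_\phi, \mu^+_\phi]$ is arational and lies in the relative interior of $V(\tau^+_\phi)$, each singular leaf unzipped in the sequence strictly favors one side, so every splitting is left or right and every shift gives a strict sign; hence each $\epsilon_\nu$ along $\gamma$ belongs to $\{+,-\}$. The main obstacle will be the coordinated choice of the refining triangulations $\tri_i$ and of the preparatory intra-polygon flips: this bookkeeping must be done so that each flip remains supported in a genuine quadrilateral and the arationality of $[F^+_\phi,\mu^+_\phi]$ correctly rules out the central-splitting alternative at every step.
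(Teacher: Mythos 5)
Your overall strategy matches the paper's: start from the unzipping sequence $\tau^+_\phi = \tau_0 \succ \cdots \succ \tau_h = \phi(\tau^+_\phi)$, pass to the suited deformations $s_{\tri}(\tau_i)$, and argue that arationality of $[F^+_\phi,\mu^+_\phi]$ forces all moves to be non-central so that the resulting flip path has strict signs. However, the step you flag as ``the main obstacle'' — realizing each elementary move of the invariant track as a chain of $\lambda_k$'s when the move involves an infinitesimal $(k>3)$-gon, together with the bookkeeping of intermediate triangulations — is precisely the substance of the proof in the paper, and your proposal does not resolve it. You propose to insert ``preparatory flips inside the relevant infinitesimal polygon'' so each elementary move lives in a quadrilateral, but this is not how it works: a single split or shift of $\tau^+_\phi$ adjacent to an infinitesimal $(k>3)$-gon corresponds to \emph{several} flips of the refining triangulation $\tri$ (see \cref{fig:unizip_flip}, where one split and one shift produce three flips $\mu_{k_1},\mu_{k_2},\mu_{k_3}$), and one must show these intermediate flips all have strict signs. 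Your arationality argument addresses the signs of moves \emph{of the invariant track} but does not immediately control the signs of the $\cX$-coordinates at the intermediate ideal arcs $\ell(k_j)$ living inside the infinitesimal polygon.

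The paper's resolution is to construct the flip path directly from the unzipping arc $\varsigma$: the path $\gamma$ follows the tuple $\mathbf{k}(\varsigma)$ of indices of ideal arcs of $\tri$ that $\varsigma$ crosses, in order. Because the singular leaves do not intersect, unzippings along different leaves commute and one can treat each $\varsigma_i$ separately. After retracting $\varsigma$ onto $s_\tri(\tau^+_\phi)$, one checks that (a) when the path of branches avoids infinitesimal $(k>3)$-gons, the unzipping is a non-central split or shift of the suited track, giving a strict sign; and (b) when $\varsigma$ crosses an infinitesimal branch of a $(k>3)$-gon, the $(k>3)$-gon persists after unzipping, the refining triangulation $\tri$ naturally induces a refining triangulation $\tri'$ of $\tri'(\phi(\tau^+_\phi))$, and the sequence $\mathbf{k}(\varsigma)$ is a horizontal path whose signs are strict by \cref{tab:deform_train_track}. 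Your write-up should establish this geometric compatibility (as in \cref{fig:unizip_flip}) rather than leaving it as an acknowledged gap, since without it the strictness claim is not proved for the intermediate flips.
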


\begin{proof}
We recall that the sequence \eqref{eq:seq_split_inv_track} is obtained from unzipping along some singular leaves of $F^+_\phi$.
Taking the subarcs $\varsigma_1, \varsigma_2, \dots, \varsigma_m$ of that singular leaves such that the train track $\phi(\tau^+_\phi)$ is given by unzipping along them to the invariant track $\tau^+_\phi$.
For $i = 1, 2, \dots, m$, let $\mathbf{k}(\varsigma_i) = (k_1, k_2, \dots, k_s)$ be the tuple of indices such that the arc $\varsigma_i$ hits to the ideal arcs $\ell(k_1), \ell(k_2), \dots, \ell(k_s) \in \tri$ in this order for a parametrization of $\varsigma_i$ which starts from the corresponding cusp of $F^+_\phi$.
Then, we will see that the horizontal path $\gamma$ along the tuple of indices $(\mathbf{k}(\varsigma_1), \mathbf{k}(\varsigma_2), \dots, \mathbf{k}(\varsigma_m))$ is the desired one.
Since singular leaves are not intersect each other, unzippings along different singular leaves are commutative.
Thus, it is enough to show that the case $m=1$.
We write $\varsigma$ for the subarc of a singular leaf.

For $\tau^+_\phi$, take a train track $s_\tri(\tau^+_\phi) = \tau$ likes as above.
By applying the retraction $r: |F^+_\phi| \searrow \tau$ of a fibered neighborhood of $\tau$ for $\varsigma$, we obtain the path $b_1 b_2 \cdots b_u$ of branches on the train track $\tau$.
It is clear that the subsequence $(i(1), i(2), \dots, i(s))$ of $(1, 2, \dots, u)$ corresponding to long branches of $\tau$ satisfies $b_{i(j)} \pitchfork \ell(k_j)$ for $j = 1, \dots, s$ and $\mathbf{k}(\varsigma) = (k_1, \dots, k_s)$.

In the case that the path $b_1 b_2 \cdots b_u$ does not contain infinitesimal branches of any infinitesimal $(k>3)$-gons, 
the unzipping along $\varsigma$ for $\tau^+_\phi$ is (not central) splitting or shifting thus so is for $\tau$.
Thus, the conditions for $\gamma$ are satisfied trivially.

In the other cases, the arc $\varsigma$ contains an infinitesimal branch of an infinitesimal $(k>3)$-gon.
We assume that $\varsigma$ contains only one such infinitesimal branch and the subsequence $(i(1), \dots, i(s))$ has length 2 for simplicity.
(Namely, $\varsigma$ consists of 2 large branches and one infinitesimal branch between them.)
After unzipping, the infinitesimal $(k>3)$-gon which has an infinitesimal branch contained in $\varsigma$ is still infinitesimal $(k>3)$-gon, and the adjacency of the switches of the infinitesimal $(k>3)$-gon which is not contained in $\varsigma$ to the long branches are preserved.
Thus, the ideal triangulation $\tri$ which contains $\tri'(\tau^+_\phi)$ induces the ideal triangulation $\tri'$ which contains $\tri'(\phi(\tau^+_\phi))$ naturally as \cref{fig:unizip_flip}.
Also, the sequence $\mathbf{k}(\varsigma)$ gives the path from $(\tri, \ell)$ to $(\tri', \ell')$ and the sign is strict by \cref{tab:deform_train_track}. See right side of \cref{fig:unizip_flip}.

Since the other cases are given by combining the two cases above, the statement was proven.
\end{proof}

\begin{figure}[h]
    \centering
    \begin{tikzpicture}
    \draw [very thick, red](-2.5,2.5) .. controls (-2,2) and (-2,2) .. (-1.5,2) .. controls (0.25,2) and (0.25,2) .. (-0.75,3.5) .. controls (0.25,2) and (0.25,2) .. (1.25,3.5) .. controls (0.25,2) and (0.25,2) .. (2,2) .. controls (0.25,2) and (0.25,2) .. (1.25,0.5) .. controls (0.25,2) and (0.25,2) .. (-0.75,0.5) .. controls (0.25,2) and (0.25,2) .. (-1.5,2) .. controls (-2,2) and (-2,2) .. (-2.5,1.5);
    \draw [blue](-2.5,2) coordinate (v1) -- (-1,3) coordinate (v2) -- (0.25,3.5) -- (1.5,3) -- (1.5,1) -- (0.25,0.5) -- (-1,1) coordinate (v3) -- (v1);
    \draw [blue] (v2) edge (v3);
    
    \draw [very thick, red] (-1.5,-4) .. controls (0.25,-4) and (0.25,-4) .. (-0.75,-2.5) .. controls (0.25,-4) and (0.25,-4) .. (1.25,-2.5) .. controls (0.25,-4) and (0.25,-4) .. (2,-4) .. controls (0.25,-4) and (0.25,-4) .. (1.25,-5.5) .. controls (0.25,-4) and (0.25,-4) .. (-0.75,-5.5) .. controls (0.25,-4) and (0.25,-4) .. (-1.5,-4) .. controls (-2,-4) and (-2,-4) .. (-2.5,-4.5);
    \draw [very thick, red](-0.15,-3.45) .. controls (0.2,-4) and (-2.3,-3.7) .. (-2.5,-3);
    
    \draw [very thick, red] (-1.5,-10) .. controls (0.25,-10) and (0.25,-10) .. (-0.75,-8.5) .. controls (0.25,-10) and (0.25,-10) .. (1.25,-8.5) .. controls (0.25,-10) and (0.25,-10) .. (2,-10) .. controls (0.25,-10) and (0.25,-10) .. (1.25,-11.5) .. controls (0.25,-10) and (0.25,-10) .. (-0.75,-11.5) .. controls (0.25,-10) and (0.25,-10) .. (-1.5,-10) .. controls (-2,-10) and (-2,-10) .. (-2.5,-10.5);
    \draw [very thick, red](-0.6,-8.7) .. controls (-0.25,-9.25) and (-2.3,-9.7) .. (-2.5,-9);
    \draw [blue](-2.5,-10) coordinate (v1) -- (-1.2,-8.85) coordinate (v2) -- (0.25,-8.5) -- (1.5,-9) -- (1.5,-11) -- (0.25,-11.5) -- (-1,-11) coordinate (v3) -- (v1);
    \draw [blue](-2.5,-10) .. controls (-1.1,-9.65) and (-0.6,-9.35) .. (0.25,-8.5);
    
    \draw [very thick, red](4.3,2.7) .. controls (5.45,1.6) and (8.15,0.55) .. (6,3.5) .. controls (7.2,1.95) and (7.6,1.95) .. (7.5,3.5) .. controls (7.5,2.35) and (7.5,2.35) .. (8.5,2) .. controls (7.1,2.55) and (6.05,2.55) .. (7.5,0.5) .. controls (6.4,1.9) and (5.7,2) .. (6,0.5) .. controls (5.8,2.1) and (5.35,2.25) .. (4.35,1.35);
    \draw [blue](4,2) coordinate (v1) -- (5.5,3) coordinate (v2) -- (6.75,3.5) coordinate (v6) {} -- (8,3) -- (8,1) coordinate (v5) {} -- (6.75,0.5) coordinate (v4) {} -- (5.5,1) coordinate (v3) -- (v1);
    \draw [blue] (v2) edge (v3);
    \draw [blue](v4) -- (v2) -- (v5) -- (v6);
    
    \draw [very thick, red](4.35,-1.25) .. controls (5.5,-2.35) and (8.15,-3.45) .. (6,-0.5) .. controls (7.2,-2.05) and (7.6,-2.05) .. (7.5,-0.5) .. controls (7.5,-1.65) and (7.5,-1.65) .. (8.5,-2) .. controls (7.1,-1.45) and (6.05,-1.45) .. (7.5,-3.5) .. controls (6.4,-2.1) and (5.55,-1.85) .. (6,-3.5) .. controls (5.75,-2.35) and (5.05,-2.6) .. (4.5,-2.65);
    \draw [very thick, red](5.15,-2.6) .. controls (5.6,-2.65) and (5.8,-2.65) .. (5.9,-2.55);
    \draw [blue](4,-2) coordinate (v1) -- (5.5,-1) coordinate (v2) -- (6.75,-0.5) coordinate (v6) {} -- (8,-1) -- (8,-3) coordinate (v5) {} -- (6.75,-3.5) coordinate (v4) {} -- (5.5,-3) coordinate (v3) -- (v1);
    \draw [blue](4,-2) .. controls (5,-2) and (6,-2.5) .. (6.75,-3.5);
    \draw [blue](v4) -- (v2) -- (v5) -- (v6);
    
    \draw [very thick, red](4.55,-5.15) .. controls (5.4,-5.75) and (7.95,-7) .. (6,-4.5) .. controls (7.2,-6.05) and (7.6,-6.05) .. (7.5,-4.5) .. controls (7.5,-5.65) and (7.5,-5.65) .. (8.5,-6) .. controls (7.1,-5.45) and (5.7,-5.5) .. (7.55,-7.45) .. controls (6.4,-6.4) and (5.55,-6.45) .. (6,-7.5) .. controls (5.75,-6.4) and (5.05,-6.6) .. (4.5,-6.65);
    \draw [blue](4,-6) coordinate (v1) -- (5.5,-5) coordinate (v2) -- (6.75,-4.5) coordinate (v6) {} -- (8,-5) -- (8,-7) coordinate (v5) {} -- (6.75,-7.5) coordinate (v4) {} -- (5.5,-7) coordinate (v3) -- (v1);
    \draw [blue](4,-6) .. controls (5.15,-6.1) and (6.3,-6.55) .. (6.75,-7.5);
    \draw [blue] (v2) -- (v5) -- (v6);
    \draw [blue](4,-6) .. controls (5,-5.95) and (6.5,-6.15) .. (8,-7);
    
    \draw [very thick, red](4,-9) .. controls (5,-9.5) and (5.85,-9.3) .. (5.85,-8.4);
    \draw [very thick, red](7.5,-8.5) .. controls (7.65,-9.4) and (7.5,-9.6) .. (8.5,-10);
    \draw [very thick, red](8.5,-10) .. controls (7.2,-9.3) and (6.95,-9.75) .. (6.6,-10.15) .. controls (6.1,-10.6) and (6.6,-10.9) .. (7.6,-11.4);
    \draw [very thick, red](5.85,-8.4) .. controls (5.75,-9.25) and (6,-9.9) .. (6.7,-9.75) .. controls (7.85,-9.55) and (7.6,-9.5) .. (7.5,-8.5);
    \draw [very thick, red](7.6,-11.4) .. controls (5.45,-10.2) and (5.5,-10.55) .. (6,-11.5);
    \draw [very thick, red](6,-11.5) .. controls (5.6,-10.5) and (5.15,-10.55) .. (4.55,-10.65);
    \draw [very thick, red](4.55,-10.65) .. controls (5.35,-10.45) and (6.15,-10.9) .. (6.45,-10.35) .. controls (6.65,-10) and (6.35,-9.9) .. (6.1,-9.6);
    \draw [blue](4,-10) coordinate (v1) -- (5.3,-8.85) coordinate (v2) -- (6.75,-8.5) -- (8,-9) -- (8,-11) coordinate (v8) {} -- (6.75,-11.5) -- (5.5,-11) coordinate (v3) -- (v1);
    \draw [blue](4,-10) .. controls (5.4,-9.65) and (5.9,-9.35) .. (6.75,-8.5) coordinate (v9) {};
    \draw [blue](4,-10) coordinate (v7) {} .. controls (5.5,-10.45) and (6,-10.65) .. (6.75,-11.5);
    \draw [blue] (v8) -- (v9);
    \draw [blue](4,-10) .. controls (5.5,-10) and (6.5,-10) .. (8,-11);
    
    \draw [very thick, red](5.2,-6.6) .. controls (5.55,-6.65) and (6.9,-6.85) .. (6.7,-6.3);
    \draw [ultra thick,-{Classical TikZ Rightarrow[length=4pt]},decorate,decoration={snake,amplitude=1.5pt,pre length=2pt,post length=3pt}](2.5,2) -- (3.5,2);
    \draw [->, thick](0,-0.25) -- node[midway, right]{split} (0,-1.75);
    \draw [->, thick](0,-6.25) -- node[midway, right]{shift} (0,-7.75);
    \draw [ultra thick,-{Classical TikZ Rightarrow[length=4pt]},decorate,decoration={snake,amplitude=1.5pt,pre length=2pt,post length=3pt}](2.5,-10) -- (3.5,-10);
    \draw [-implies, double distance=2pt](5.5,0.5) -- node[midway, left]{$\mu_{k_1}$} (5.5,-0.5);
    \draw [-implies, double distance=2pt](5.5,-3.5) -- node[midway, left]{$\mu_{k_2}$} (5.5,-4.5);
    \draw [-implies, double distance=2pt](5.5,-7.5) -- node[midway, left]{$\mu_{k_3}$} (5.5,-8.5);
    \node[blue] at (5.3,2.4) {$k_1$};
    \node[blue] at (5.65,-1.65) {$k_2$};
    \node[blue] at (6,-5.55) {$k_3$};
    \node[blue] at (-1.8,3) {$\tri'(\tau)$};
    \node[blue] at (4.75,3) {$\tri$};
    \node[blue] at (-1.85,-8.75) {$\tri'(\tau')$};
    \node[blue] at (4.75,-8.75) {$\tri'$};
    \node[red] at (1.9,2.3) {$\tau$};
    \node[red] at (8.55,2.35) {$s_\tri(\tau)$};
    \node[red] at (1.9,-9.65) {$\tau'$};
    \node[red] at (8.7,-9.55) {$s_{\tri'}(\tau')$};
    \end{tikzpicture}
    \caption{The compatibility of the RLS-word and the sequence of flips around the infinitesimal hexagon of $\tau$.}
    \label{fig:unizip_flip}
\end{figure}
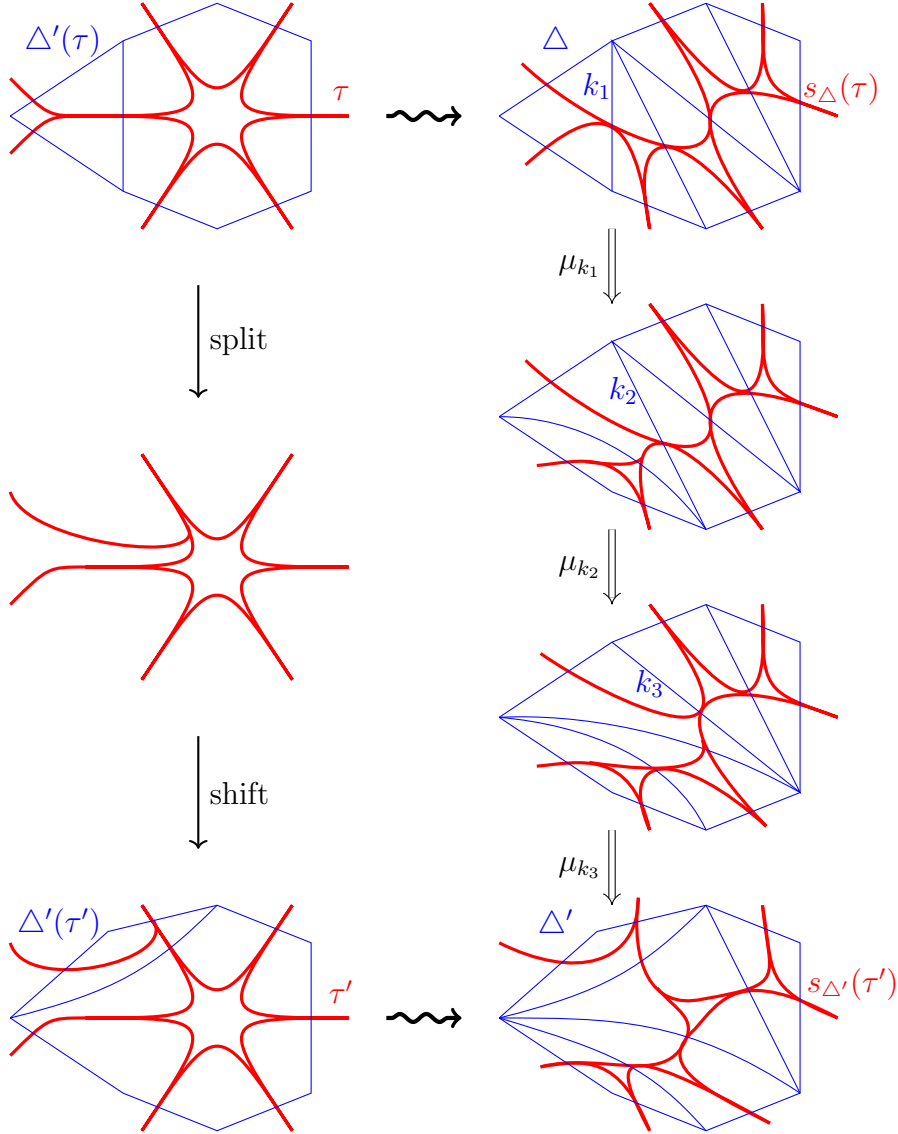

We note that the ideal triangulation $\tri'$ appeared in \cref{lem:unizip_path} is not $\phi(\tri)$ but they contain the ideal polygon decomposition $\tau'(\phi(\tau^+_\phi))$.
Thus, the difference between them is only the ideal triangulation of the ideal $(k>3)$-gons.
Since the coordinates $x_\alpha^{\tri'}([F^+_\phi, \mu^+_\phi])$ are zero for any $\alpha \in \tri' \setminus \tri'(\phi(\tau^+_\phi))$, so the paths from $(\tau', \ell')$ to $\phi(\tri, \ell)$ which deform only inside the ideal polygons are not desired ones.
Although, by taking the same ideal triangulations of the ideal $k$-gons of $\tri'(\tau^+_\phi)$ up to rotation for each $k>3$ at the beginning, the difference of the ideal triangulations of them are only some rotations.
Therefore, iterating this procedure several times, we obtain the ideal triangulation $\phi^r(\tri)$:

\begin{lem}\label{lem:phi^r}
There is an integer $r \geq 1$, such that $s_{\phi^r(\tri)}(\phi^r(\tau^+_\phi)) = \phi^r(s_\tri(\tau^+_\phi))$.
\end{lem}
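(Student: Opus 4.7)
The plan is to arrange $r$ so that $\phi^r$ acts trivially on the discrete combinatorial data that can distinguish different choices of $s_\tri(\tau^+_\phi)$. The only freedom in constructing $s_\tri(\tau^+_\phi)$ from $\tau^+_\phi$, beyond the choice of $\tri'(\tau^+_\phi)$, is the choice of ideal triangulation of each ideal $(k{>}3)$-gon $P_\sigma$ of $\tri'(\tau^+_\phi)$, one such polygon for each singularity $\sigma$ of $F^+_\phi$ of prong number $k_\sigma > 3$. As observed in the paragraph preceding the lemma, once one fixes such triangulations uniformly (``the same up to rotation for each $k>3$''), the only possible obstruction to $s_{\phi^r(\tri)}(\phi^r(\tau^+_\phi)) = \phi^r(s_\tri(\tau^+_\phi))$ is a rotation of the chosen fan triangulation inside each $P_\sigma$.

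Let $S$ denote the finite set of singularities of $F^+_\phi$ with more than three prongs. Since $\phi$ preserves the foliation $F^+_\phi$ up to rescaling its transverse measure, it permutes $S$; choose $r_0 \geq 1$ so that $\phi^{r_0}$ fixes every $\sigma \in S$ pointwise. At each such fixed $\sigma$, the orientation-preserving homeomorphism $\phi^{r_0}$ acts on the cyclically ordered set of $k_\sigma$ prongs at $\sigma$ as a rotation of some finite order $m_\sigma$ dividing $k_\sigma$. Setting
\[
r := r_0 \cdot \operatorname{lcm}\{\, m_\sigma \mid \sigma \in S \,\},
\]
the homeomorphism $\phi^r$ fixes every $\sigma \in S$ together with every prong at $\sigma$.

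To conclude, fix once and for all a distinguished prong $\pi_\sigma$ at each $\sigma \in S$, and adopt the fan triangulation from $\pi_\sigma$ as the canonical choice for the ideal $k_\sigma$-gon around $\sigma$ in any polygon decomposition under consideration. Using this convention to define $\tri$ and hence $s_\tri(\tau^+_\phi)$, and also to define $s_{\phi^r(\tri)}(\phi^r(\tau^+_\phi))$, the image $\phi^r(s_\tri(\tau^+_\phi))$ realises on each $\phi^r(P_\sigma) = P_\sigma$ precisely the fan triangulation from $\phi^r(\pi_\sigma) = \pi_\sigma$. Thus $\phi^r(s_\tri(\tau^+_\phi))$ agrees with $s_{\phi^r(\tri)}(\phi^r(\tau^+_\phi))$ both outside and inside the $(k{>}3)$-gons, proving the claimed equality.

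The only real subtlety is combinatorial bookkeeping: one has to check that the ``fan from a distinguished prong'' convention is canonical enough to commute with pushforward by $\phi^r$. With $r$ chosen as above so that every distinguished prong is $\phi^r$-invariant, this compatibility is automatic, and the entire argument reduces to the elementary finite-order fact used to pick $r$.
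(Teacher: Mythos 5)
Your proof follows essentially the same route as the paper's informal sketch (the paragraph preceding the lemma is the only argument the paper offers): both observe that the sole obstruction is a finite rotation of the fan triangulations inside each ideal $(k>3)$-gon, and that iterating cancels it. Your added contribution is to pin down $r$ explicitly via the $\phi$-action on the $(k>3)$-prong singularities of $F^+_\phi$ and their prongs; you implicitly assume, as the paper does without verification, that the rotation offset produced by the unzipping construction of \cref{lem:unizip_path} is exactly the prong-rotation that this choice of $r$ is designed to kill.
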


Therefore, the inverse of the path $\gamma$ of \cref{lem:unizip_path} for $\phi^r$ is a sign-stable path of $\phi^r$.
Now ready to prove our main statement:
\begin{thm}\label{thm:SS_pA}
Let $\Sigma$ be a punctured surface and $\phi$ be a (general) pseudo-Anosov mapping class on $\Sigma$.
Then, $\phi^r$ has a representation path which is sign-stable on $\bR_{>0} \cdot \cX_\Sigma^\uf(\bZ^\trop)$ for some $r \geq 1$.
\end{thm}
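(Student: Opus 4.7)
The plan is to combine Lemma \ref{lem:phi^r} with (an extension of) Lemma \ref{lem:unizip_path} to manufacture a representation path whose sign at the unstable foliation $[F^+_\phi,\mu^+_\phi]$ is already strict, and then to use the standard pseudo-Anosov dynamics to propagate that strict sign to nearby points. The key slogan is: \emph{the invariant track records exactly the sequence of signs of a horizontal mutation loop along the RLS-word of $\phi$}.

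First I would invoke Lemma \ref{lem:phi^r} to fix an $r\geq 1$ with $s_{\phi^r(\tri)}(\phi^r(\tau^+_\phi))=\phi^r(s_\tri(\tau^+_\phi))$, so that the suited deformation of the invariant track is taken into itself by $\phi^r$. Replacing $\phi$ by $\phi^r$ (which does not alter $F^+_\phi$ projectively), I may assume $r=1$ in what follows. Next, apply Lemma \ref{lem:unizip_path} to obtain a horizontal path $\gamma$ in $\bTri(\Sigma)$ from $(\tri,\ell)$ to $\phi(\tri,\ell)$ built from the RLS-word of $[F^+_\phi,\mu^+_\phi]$ translated across flips via Table \ref{tab:deform_train_track}; by construction, the sign $\bep^+_\gamma := \bep_\gamma([F^+_\phi,\mu^+_\phi])$ lies in $\{+,-\}^{h(\gamma)}$, i.e.\ is strict. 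This $\gamma$ (or its inverse, used as a representation path of $\phi$) is my candidate sign-stable path, with candidate stable sign $\bep^+_\gamma$.

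It remains to show that for every $w\in\bR_{>0}\cdot\cX_\Sigma^\uf(\bZ^\trop)\setminus\{0\}$, one has $\bep_\gamma(\phi^n(w))=\bep^+_\gamma$ for all sufficiently large $n$. By Lemma \ref{l:x-cluster signed}, each entry of $\bep_\gamma(\cdot)$ is the sign of a piecewise-linear function on $\cX_\Sigma(\bR^\trop)$ which, crucially, is \emph{linear} (hence its zero locus is a closed cone of codimension $\geq 1$) on the maximal domain of linearity containing $[F^+_\phi,\mu^+_\phi]$. Strictness of $\bep^+_\gamma$ therefore means that $[F^+_\phi,\mu^+_\phi]$ sits in the interior of this domain: there is an open $\bR_{>0}$-invariant neighborhood $U$ of $\bR_{>0}\cdot[F^+_\phi,\mu^+_\phi]$ in $\cX_\Sigma(\bR^\trop)$ on which $\bep_\gamma\equiv\bep^+_\gamma$. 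Finally, classical pseudo-Anosov dynamics on $\cX_\Sigma(\bR^\trop)\cong\eML(\Sigma)$ (together with the identification $\cU_\Sigma(\bR^\trop)\cong\MF(\Sigma)$ from \cref{prop:V=U}) shows that for every $w$ in the stated domain, the projective class of $\phi^n(w)$ converges to $[F^+_\phi,\mu^+_\phi]$ as $n\to\infty$; hence $\phi^n(w)\in U$ eventually, giving the desired sign equality.

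The main obstacle I anticipate is the last ingredient: the projective convergence $\phi^n(w)\to[F^+_\phi,\mu^+_\phi]$ for \emph{every} $w\in\bR_{>0}\cdot\cX_\Sigma^\uf(\bZ^\trop)$, not just for $w$ in the subspace $\cU_\Sigma(\bR^\trop)\cong\MF(\Sigma)$ where the north–south dynamics of $\phi$ on $\PMF(\Sigma)$ is textbook. Points of $\cX_\Sigma(\bR^\trop)\setminus\cU_\Sigma(\bR^\trop)$ correspond to enhanced foliations with leaves stuck in punctures (cf.\ \cite{GW17}), and one must argue that these ``peripheral contributions'' are contracted by $\phi$ relative to the expansion rate $\lambda_\phi$ along $F^+_\phi$, so that their projective images still converge to $[F^+_\phi,\mu^+_\phi]$. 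This can be done either by directly analyzing the action of $\phi$ on the fibers of the forgetful map $\cX_\Sigma(\bR^\trop)\to\cU_\Sigma(\bR^\trop)$, or—more efficiently—by reusing the cone structure from \cref{cor:TT_fan} and Theorem \ref{thm:birel_TT} to control the peripheral coordinates; the restriction to $\cX_\Sigma^\uf$ rather than the full $\cX_\Sigma$ is precisely what makes this control possible. With this in hand, the three steps above combine to give the theorem.
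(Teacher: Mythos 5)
Your proposal follows essentially the same strategy as the paper's proof: combine \cref{lem:unizip_path} and \cref{lem:phi^r} to produce a representation path $\gamma$ of $\phi^r$ whose sign at $[F^+_\phi,\mu^+_\phi]$ is strict, then use pseudo-Anosov dynamics to propagate that strict sign to an eventual stable sign on the whole domain. The only substantive difference is the last step. The paper handles it by a single appeal to the argument already carried out in \cite[Section 7.1]{IK20a} for the generic case, observing that once the candidate path has a strict sign $\bep^+_\gamma$ at the unstable foliation, that argument transfers unchanged to show $\phi^{nr}(\cF)\in\interior\cC^{\bep^+_\gamma}_\gamma$ for all large $n$ and all $\cF\in\cX_\Sigma(\bZ^\trop)$. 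You instead try to re-derive this from scratch via projective convergence of $\phi^n(w)$ to $[F^+_\phi,\mu^+_\phi]$, and you correctly flag that while this is textbook north--south dynamics on $\PMF(\Sigma)$, it is not automatic for points of $\cX_\Sigma(\bR^\trop)$ that do not lie in the $\MF(\Sigma)$-slice. Your candidate resolution via a ``forgetful map $\cX_\Sigma(\bR^\trop)\to\cU_\Sigma(\bR^\trop)$'' is not quite available as stated---there is no canonical projection from $\cX$ down to $\cU$; rather $\cU$ is the image of the ensemble map $p:\cA\to\cX$---so if you want to make this route precise you would essentially have to reproduce the estimate in \cite[Section 7.1]{IK20a}. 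Your alternative route via the cone structure of \cref{cor:TT_fan} and \cref{thm:birel_TT} is closer in spirit to what the cited reference actually does. A minor point of bookkeeping: it is slightly misleading to ``assume $r=1$'' after invoking \cref{lem:phi^r}---the whole point of the exponent $r$ is to close the unzipping path into a genuine representation path, so one should first fix $r$ and then apply \cref{lem:unizip_path} once for $\phi^r$, exactly as the paper does.
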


\begin{proof}
Let $\phi$ be a pseudo-Anosov mapping class on $\Sigma$.
Then, by \cref{lem:unizip_path}, there is a representation path 
\begin{align*}
    \gamma: (\tri, \ell) \to \phi^{-r}(\tri, \ell)
\end{align*}
of $\phi^r$ for some $r \geq 1$ such that the sign $\bep^+_\gamma$ of $\gamma$ at $[F^+_\phi, \mu^+_\phi]$ is strict.
Then, we can use this path $\gamma$ for the discussion in \cite[Section 7.1]{IK20a}.
Namely, for each point $\cF \in \cX_\Sigma(\bZ^\trop)$, there is $n_0 >0$ such that $\phi^{nr}(\cF) \in \interior \cC^{\bep^+_\gamma}_\gamma$ for all $n \geq n_0$.
Here, $\cC^{\bep^+_\gamma}_\gamma$ is a full dimensional cone, which is defined by
\begin{align*}
    \cC^{\bep^+_\gamma}_\gamma := \overline{\{ \cF \in \cX_{\Sigma}(\bR^\trop) \mid \bep_\gamma(\cF) = \bep^+_\gamma\}}.
\end{align*}
It is nothing but the desired statement.
\end{proof}

\subsection{Comments on the bounded sign stability of pseudo-Anosov mapping classes}\label{subsec:bdd_SS}
The Perron--Frobenius theory is one of the strong methods to study the linear dynamical system.
This theory tells us that the spectral radius of an irreducible nonnegative matrix is larger than 1 and simple, and all entries of its eigenvector are positive, in brief.
Nowadays, the Perron--Frobenius theory is generalized for the matrices  $M$ with the \emph{invariant cones} $\cC$.
Namely, the linear map $f$ corresponding to $M$ satisfies $f(\cC) \subset \cC$.
The original theory is the Perron--Frobenius theory for the nonnegative matrices with the nonnegative cone as the invariant cone.
For most of the theorems of the Perron--Frobenius theory, the invariant cones are polyhedral.

In the theory of sign stability of mutation loops, the cone
\begin{align*}
    \cC^\stab_\gamma := \overline{\bigcap_{n \geq 0} \phi^{-n}(\cC^{\bep^\stab_\gamma}_\gamma)}
\end{align*}
corresponds to the invariant cone but it is not polyhedral in general.
Here, $\gamma$ is a sign-stable representation path of a mutation loop $\phi$ and $\bep^\stab_\gamma$ is the stable sign.
The easy sufficient condition of that this cone is polyhedral is as follows:
\begin{defi}
A representation path $\gamma$ of a mutation loop $\phi$ is \emph{bounded sign-stable} on $\Omega$ if it is sign-stable on $\Omega$ with the stable sign $\bep^\stab_\gamma$ and satisfies
\begin{align*}
    \cC^\stab_\gamma = \bigcap_{0 \leq n \leq n_0} \phi^{-n} (\cC^{\bep^\stab_\gamma}_\gamma).
\end{align*}
\end{defi}

Since $\cC_\gamma^{\bep}$ is (rational) polyhedral, so is $\cC_\gamma^{\stab}$ if $\gamma$ is bounded sign-stable.
Then, we can use some techniques of Perron--Frobenius theory.
For instance, the bounded sign stability is a sufficient condition of the \emph{North dynamics} \cite[Proposition 3.14]{Kan21}.

For a pseudo-Anosov mapping class $\phi$, it is clear that the cone $\cV(\tau^+_\phi)$ is (rational) polyhedral and it is an invariant cone of $\phi$: $\phi(\cV(\tau^+_\phi)) \subset \cV(\tau^+_\phi)$.
On the other hand, the bounded sign stability holds for some pseudo-Anosov mapping classes which the author checked.
Moreover, these examples satisfy the following:
\begin{align}
    \cC^\stab_\gamma \cap \cU_\Sigma(\bR^\trop) = p_\Sigma(\cV(\tau^+_\phi)).
\end{align}
So we conjecture that this property holds for every pseudo-Anosov mapping class.

\subsection{Algebraic and categorical entropies of pseudo-Anosov mapping classes}

Finally, as the direct consequence of \cref{thm:SS_pA}, we compute the algebraic entropies of some birational automorphisms and categorical entropies of some exact autoequivalences induced by pseudo-Anosov mapping classes.

Let $\phi$ be a pseudo-Anosov mapping class on a punctured surface.
Then, it induces birational maps
\begin{align*}
    \phi_a: \cA_\Sigma \to \cA_\Sigma, \quad \phi_x: \cX_\Sigma \to \cX_\Sigma.
\end{align*}
Then, we can associate them with the positive real numbers $h^\mathrm{alg}(\phi_a)$ and $h^\mathrm{alg}(\phi_x)$, called \emph{algbraic entropies} (\cite{BV99}).

On the other hand, $\phi^r$ has a sign-stable path $\gamma: (\tri, \ell) \to \phi^{-r}(\tri, \ell)$ on $\cC^+_{(\tri, \ell)}$.
Therefore, it induces an autoequivalence $F_{\phi^r} = F_{\phi^r}^{(\tri, \ell)}: \sD(\Gamma_{Q^{(\tri, \ell)}, W^{(\tri, \ell)}}) \to \sD(\Gamma_{Q^{(\tri, \ell)}, W^{(\tri, \ell)}})$ of the derived category $\sD(\Gamma_{Q^{(\tri, \ell)}, W^{(\tri, \ell)}})$ of the Ginzburg dg algebra $\Gamma_{Q^{(\tri, \ell)}, W^{(\tri, \ell)}}$ of the quiver with potential of $(\tri, \ell)$.
We note that the existence of the induced functor $F_{\phi^r}$ depends on the sign-stability, in contrast to the birational maps $\phi_a$ and $\phi_x$.
It restricts to the subcategories
\begin{align*}
    F_{\phi^r}|_{\Dfd}: \Dfd^{(\tri, \ell)} \to \Dfd^{(\tri, \ell)}, \quad F_{\phi^r}|_\per: \per^{(\tri, \ell)} \to \per^{(\tri, \ell)}.
\end{align*}
Here, $\Dfd^{(\tri, \ell)}$ and $\per^{(\tri, \ell)}$ are called finite-dimensional derived category and perfect derived category, respectively.
Then, we can associate them with the positive real numbers $h^\mathrm{cat}_T(F_{\phi^r}|_{\Dfd})$ and $h^\mathrm{cat}_T(F_{\phi^r}|_{\per})$ for $T \in \bR$, called \emph{categorical entropies} (\cite{DHKK14}).
See \cite{Kan21} for more detail.

\begin{cor}\label{cor:entropy}
We have
\begin{align*}
    h^\mathrm{alg}(\phi_a) = h^\mathrm{alg}(\phi_x) =  h^\mathrm{cat}_T(F_{\phi^r}|_{\Dfd})/r = h^\mathrm{cat}_0(F_{\phi^r}|_{\per})/r = h^\mathrm{top}(\phi).
\end{align*}
Here, $h^\mathrm{top}(\phi)$ is the topological entropy of $\phi$, which is given by $\log \lambda_\phi$.
\end{cor}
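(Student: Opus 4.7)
The plan is to derive Corollary \ref{cor:entropy} as a direct application of the sign stability established in \cref{thm:SS_pA}, combined with the general entropy formulas for sign-stable mutation loops developed in \cite{Kan21}. The key point is that once we know $\phi^r$ admits a sign-stable representation path $\gamma$ with strict stable sign $\bep^\stab_\gamma$, the tropical cluster transformation along $\gamma$ becomes a genuine linear map $E_\gamma^{\bep^\stab_\gamma}$ on $\bR^I$ when restricted to the stable cone $\cC^\stab_\gamma$, and the spectral radius of this linear map governs all the entropies in question.

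First I would recall from \cite{Kan21} the identifications
\begin{align*}
h^\mathrm{alg}(\phi_a) = h^\mathrm{alg}(\phi_x) = \frac{1}{r}\log \rho(E_\gamma^{\bep^\stab_\gamma}),\qquad
h^\mathrm{cat}_T(F_{\phi^r}|_{\Dfd}) = h^\mathrm{cat}_0(F_{\phi^r}|_{\per}) = \log \rho(E_\gamma^{\bep^\stab_\gamma}),
\end{align*}
which hold whenever $\gamma$ is sign-stable on $\bR_{>0}\cdot \cX_\Sigma^\uf(\bZ^\trop)$. The first equality follows because sign stability of a representation path forces the iterated pullbacks of the cluster $\cA$- and $\cX$-coordinates to factor through the single linear map $E_\gamma^{\bep^\stab_\gamma}$ for large $n$, so the algebraic degree growth is controlled by its spectral radius. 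The categorical entropies are computed via the induced action on the Grothendieck groups of $\Dfd$ and $\per$, which after sign stabilization coincide (up to duality) with the action of $E_\gamma^{\bep^\stab_\gamma}$.

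Next I would identify $\rho(E_\gamma^{\bep^\stab_\gamma})$ with the pseudo-Anosov dilatation $\lambda_{\phi^r} = \lambda_\phi^r$. This is where the geometric content of the construction in \cref{sec:TTsplit} enters: the representation path $\gamma$ was built precisely so that its stable sign corresponds to the sequence of splittings and shiftings of the invariant track $\tau^+_\phi$ witnessing $\phi^r(\tau^+_\phi)\prec \tau^+_\phi$. By \cref{lem:trans_mat_elem,thm:atlases}, the product of the transition matrices along this RLS-word, viewed in the train track chart, agrees with the linear map $E_\gamma^{\bep^\stab_\gamma}$ under the chart change between $\cV(\tau^+_\phi)$ and the cluster $\mathbf{a}$-coordinate. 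Since the transition matrix of $\phi^r(\tau^+_\phi)\prec \tau^+_\phi$ has the measure $\nu^+_\phi \in \relint V(\tau^+_\phi)$ as a positive eigenvector with eigenvalue $\lambda_\phi^r$, Perron--Frobenius (together with the boundedness discussed in \cref{subsec:bdd_SS}) forces $\rho(E_\gamma^{\bep^\stab_\gamma}) = \lambda_\phi^r$.

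Combining the two steps gives $h^\mathrm{alg}(\phi_a) = h^\mathrm{alg}(\phi_x) = \log \lambda_\phi = h^\mathrm{top}(\phi)$ and $h^\mathrm{cat}_T(F_{\phi^r}|_{\Dfd})/r = h^\mathrm{cat}_0(F_{\phi^r}|_{\per})/r = \log \lambda_\phi$. The main technical obstacle is the identification $\rho(E_\gamma^{\bep^\stab_\gamma}) = \lambda_\phi^r$: one must carefully check that the cluster-algebraic linear action on $\cC^\stab_\gamma$ truly coincides, after the train-track-to-cluster chart change of \cref{thm:atlases}, with the Thurston action of $\phi^r$ on $V(\tau^+_\phi)$, so that the Perron eigenvalue of the latter is transferred to $E_\gamma^{\bep^\stab_\gamma}$. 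The independence of the categorical entropy from $T\in\bR$ in the $\Dfd$-case and the vanishing-$T$ restriction in the $\per$-case are both standard consequences of sign stability once this identification is in place.
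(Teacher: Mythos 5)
Your proposal follows the same broad approach as the paper: apply the entropy theorems of \cite{IK21} and \cite{Kan21} for sign-stable mutation loops, and transfer the Perron--Frobenius eigenvalue of the train-track transition matrix to the cluster stretch factor. However, there are two genuine hypotheses that you do not address, and which the paper must explicitly invoke.

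First, \cite[Corollary 1.2]{IK21} gives $h^\mathrm{alg}(\phi_a) = h^\mathrm{alg}(\phi_x) = \log\rho(E^{\bep^\stab_\gamma}_\gamma)$ only under the additional assumption that the \emph{palindromicity conjecture} (\cite[Conjecture 3.13]{IK21}) holds for the representation path in question. Sign stability by itself controls degree growth only on a stable cone, not globally on the torus; the palindromicity hypothesis is what allows one to conclude the sharp (two-sided) entropy formula. The paper's proof therefore cites \cite[Proposition 8.7]{IK20a}, which verifies this conjecture for all representation paths of mapping classes of punctured surfaces. Your sentence ``sign stability \ldots forces the iterated pullbacks \ldots to factor through the single linear map $E_\gamma^{\bep^\stab_\gamma}$, so the algebraic degree growth is controlled by its spectral radius'' glosses over precisely this point and is not correct as stated without palindromicity.

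Second, you assert that the formulas from \cite{Kan21} ``hold whenever $\gamma$ is sign-stable on $\bR_{>0}\cdot \cX_\Sigma^\uf(\bZ^\trop)$.'' In fact the categorical entropy statement in \cite[Corollary 1.2]{Kan21} requires sign stability on the canonical domain $\Omega^{\mathrm{can}}_{(\tri,\ell)}$, and the paper must therefore note that sign stability on $\bR_{>0}\cdot\cX_\Sigma(\bZ^\trop)$ induces it on $\Omega^{\mathrm{can}}_{(\tri,\ell)}$. Your proposal omits this domain-transfer step. Both omissions are citation-level, but they are genuine ingredients the proof requires; as written your argument would not compile to a correct proof without identifying and invoking them. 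The remaining part of your argument --- identifying $\rho(E^{\bep^\stab_\gamma}_\gamma)$ with $\lambda^r_\phi$ via the train-track-to-cluster chart change and the Perron eigenvector $\nu^+_\phi$ --- is essentially the ``same argument as \cite[Section 7.2]{IK20a} for the cone $V(\tau^+_\phi)$'' that the paper uses, so that step is fine.
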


\begin{proof}
It is clear that the sign stability of $\gamma$ on $\bR \cdot \cX_\Sigma(\bZ^\trop)$ induces that on $\Omega^\mathrm{can}_{(\tri, \ell)}$.
Moreover,
\begin{itemize}
    \item the palindromicity conjecture \cite[Conjecture 3.13]{IK21} holds for any representation paths of any mapping classes of a punctured surface \cite[Proposition 8.7]{IK20a} and
    \item the cluster stretch factor coincides with the stretch factor by applying the same argument in \cite[Section 7.2]{IK20a} for the cone $V(\tau^+_\phi)$.
\end{itemize}
Also, we recall the basic property of algebraic entropy:
\begin{itemize}
\item For a rational map $\varphi:(\bG_m)^n \to (\bG_m)^n$ and an integer $N \geq 0$, we have $\cE_{\varphi^N} = N \cE_\varphi$.
\end{itemize}
Therefore, we get the desired statement by \cite[Corollary 1.2]{IK21} and \cite[Corollary 1.2]{Kan21}.
\end{proof}

\appendix
\section{Cluster ensembles}\label{sec:appCA}
In this section, we briefly recall some basic notions around cluster varieties.
Here, we use simple settings so that we can apply them in the case of punctured surfaces.

\subsection{Mutation classes and their labeled exchange graphs}
Let us fix a finite index set $I$ and the field $\cF_A$ and $\cF_X$ of rational functions on $|I|$ indeterminants.

\begin{defi}
A \emph{labeled seed} is a triple $(B, \mathbf{A}, \mathbf{X})$ such that
\begin{itemize}
    \item $B = (b_{ij})_{i,j \in I}$ is a skew-symmetric integral matrix;
    \item $\mathbf{A} = (A_i)_{i \in I}$ (resp. $\mathbf{X} = (X_i)_{i \in I}$) is a transcendence basis of $\cF_A$ (resp. $\cF_X$).
\end{itemize}
The matrix $B$ is called the \emph{exchange matrix} and the variables $A_i$ (resp. $X_i$) are called $\cA$-variables (resp. $\cX$-variables).
\end{defi}

\begin{defi}[seed mutation]
Let $(B, \mathbf{A}, \mathbf{X})$ be a seed.
We define the new seed $\mu_k(B, \mathbf{A}, \mathbf{X}) = (B', \mathbf{A}', \mathbf{X}')$ for $k \in I$ by
\begin{align}
    b'_{ij} &:=
    \begin{cases}
    -b_{ij} & \mbox{if $i=k$ or $j=k$},\\
    b_{ij} + \dfrac{1}{2}(|b_{ik}|b_{kj} + b_{ik}|b_{kj}|) & \mbox{otherwise},
    \end{cases}\nonumber\\
    A'_i &:=
    \begin{cases}
    \dfrac{1}{A_k}\big( \prod_{j}A_j^{[b_{kj}]_+} + \prod_{j}A_j^{[-b_{kj}]_+} \big) & \mbox{if $i=k$},\\
    A_i & \mbox{if $i \neq k$},
    \end{cases}\label{eq:A_mut}\\
    X'_i &:=
    \begin{cases}
    X_{k}^{-1} & \mbox{if $i=k$},\\
    X_{i}\big(1+X_{k}^{-\sgn (b_{ik})}\big)^{-b_{ik}} & \mbox{if $i\neq k$}.
    \end{cases}\label{eq:X_mut}
\end{align}
Here, $B = (b_{ij})_{i,j \in I}$, $B'=(b'_{ij})_{i, j\in I}$, $\mathbf{A} = (A_i)_{i \in I}$, $\mathbf{A}' = (A'_i)_{i \in I}$, $\mathbf{X} = (X_i)_{i \in I}$ and $\mathbf{X}' = (X'_i)_{i \in I}$.
\end{defi}

It is easy to verify that the seed mutation $\mu_k$ is involutive.
For a permutation $\sigma \in \fS_I$, we similarly define $\sigma.(B, \mathbf{A}, \mathbf{X}) = (B', \mathbf{A}',\mathbf{X}')$ by 
\begin{align*}
    b'_{ij}=b_{\sigma^{-1}(i),\sigma^{-1}(j)},
    \quad A'_i = A_{\sigma^{-1}(i)}
    \quad \mbox{and} \quad X'_i= X_{\sigma^{-1}(i)}. 
\end{align*}

\begin{defi}
We say that two labeled seeds $(B, \mathbf{A}, \mathbf{X})$, $(B', \mathbf{A}', \mathbf{X}')$ are \emph{mutation-equivalent} if there is a finite composition of seed mutations and permutations that maps $(B, \mathbf{A}, \mathbf{X})$ to $(B', \mathbf{A}', \mathbf{X}')$.
A mutation-equivalence class $\bs$ of labeled seeds is simply called a \emph{mutation class}.
\end{defi}

Mutation classes of labeled seeds are the basic subjects in the research field of cluster algebra. 

\begin{defi}
The relations among the labeled seeds in a given mutation class $\bs$ can be encoded in the \emph{(labeled) exchange graph} $\bExch_\bs
$. It is a graph with vertices $v$ corresponding to the labeled seeds $\bs^{(v)}$ in $\bs$, together with labeled edges of the following two types:
\begin{itemize}
    \item labeled edges of the form $v \overbar{k} v'$ whenever the seeds $\bs^{(v)}$ and $\bs^{(v')}$ are related by the mutation $\mu_k$ for some $k \in I$;
    \item labeled edges of the form $v \overbarnear{\sigma} v'$ whenever the seeds $\bs^{(v)}$ and $\bs^{(v')}$ are related by a transposition $\sigma=(j\ k) \in \fS_I$.
\end{itemize}
\end{defi}

When no confusion can occur, we simply denote a vertex of the labeled exchange graph by $v \in \bExch_\bs$.
For each vertex $v \in \bExch_\bs$, we denote the corresponding labeled seed by $\bs^{(v)}=(B^{(v)}, \mathbf{A}^{(v)}, \mathbf{X}^{(v)})$, $B^{(v)}=(b_{ij}^{(v)})_{i,j \in I}$, $\mathbf{A}^{(v)}=(A_i^{(v)})_{i \in I}$ and $\mathbf{X}^{(v)}=(X_i^{(v)})_{i \in I}$. 

\subsection{Cluster ensemble}
For a mutation class $\bs$, one can associate the pair $(\cA_\bs, \cX_\bs)$ of schemes called the cluster $\cA$/$\cX$-variety and a rational map $p_\bs: \cA_\bs \to \cX_\bs$ called the ensemble map.
The datum $p_\bs: \cA_\bs \to \cX_\bs$ is also called \emph{cluster ensemble}.

Let $\bs$ be a mutation class.
For $v \in \bExch_\bs$, we consider a lattice $N^{(v)}$ with a fixed basis $\{e^{(v)}_i \}_{i \in I}$ and its dual lattice $M^{(v)}$ with the dual basis $\{f^{(v)}_i\}_{i \in I}$.
Let us define two algebraic tori $\cA_{(v)} := \Hom(M^{(v)}, \bG_m)$ and $\cX_{(v)} := \Hom(N^{(v)}, \bG_m)$ of rank $|I|$, where $\bG_m = \Spec \bZ[u, u^{-1}]$.
The characters $A^{(v)}_i := \mathrm{ch}_{f_i^{(v)}}: \cA_{(v)} \to \bG_m$ and $X^{(v)}_i := \mathrm{ch}_{e_i^{(v)}}: \cX_{(v)} \to \bG_m$ are define the coordinate system of $\cA_{(v)}$ and $\cX_{(v)}$ respectively, they called the \emph{cluster coordinates}.
The mutation rules \eqref{eq:A_mut} and \eqref{eq:X_mut} turn into brational maps $\mu_{x,k}: \cX_{(v)} \to \cX_{(v')}$ and $\mu_{a,k}: \cA_{(v)} \to \cA_{(v')}$ respectively, called the \emph{cluster transformations}.
Then, the cluster $\cA$- and $\cX$-varieties are the schemes defined as
\begin{align*}
    \cA_\bs := \bigcup_{v \in \bExch_\bs} \cA_{(v)}, \quad
    \cX_\bs := \bigcup_{v \in \bExch_\bs} \cX_{(v)}.
\end{align*}
Here, the tori $\{\cA_{(v)}\}_{v \in \bExch_\bs}$ (resp. $\{\cX_{(v)}\}_{v \in \bExch_\bs}$) are identified via the cluster transformations $\mu_{a, k}$ (resp. $\mu_{x, k}$) and permutations according to the graph $\bExch_\bs$.

\subsection{Tropicalized cluster ensemble}
Let $\bs$ be a mutation class.
Since the coordinate transformations of the cluster varieties $\cA_\bs, \cX_\bs$ can be expressed as subtraction-free forms, we can take the set $\cA_\bs(\bP), \cX_\bs(\bP)$ of semifield $\bP$ valued points.
Here, we give the concrete definition of it for the tropical semifield $\bR^\trop = (\bR, \min, +)$ as $\bP$.
The \emph{tropicalized cluster $\cA$- (resp. $\cX$-) variety} associated with $\bs$ is a PL-manifold $\cA_\bs(\bR^\trop)$ (resp. $\cX_\bs(\bR^\trop)$) homeomorphic to $\bR^I$, equipped with an atlas consisting of global charts
\begin{align*}
    \mathbf{a}^{(v)}: \cA_\bs(\bR^\trop) \to \cA_{(v)}(\bR^\trop) = N^{(v)} \otimes \bR \quad
    (\mbox{resp. } \mathbf{x}^{(v)}: \cX_\bs(\bR^\trop) \to \cX_{(v)}(\bR^\trop) =  M^{(v)} \otimes \bR)
\end{align*}
for $v \in \bExch_\bs$ such that the coordinate transformations among them are given by tropicalized cluster transformations and permutations.
Here, the tropicalized cluster transformations are given by
\begin{align*}
    \mu_{a,k}^* a'_i &= \begin{cases}
    \min\big\{ \sum_j [b_{kj}]_+ a_j,\, \sum_j [-b_{kj}]_+ a_j \big\} - a_k & \mbox{if } i=k,\\
    a_i & \mbox{if } i \neq k,
    \end{cases}\\
    \mu_{x,k}^* x'_i &= \begin{cases}
    -x_k & \mbox{if } i=k,\\
    x_i - b_{ik} \min\{ 0, -\sgn(b_{ik}) x_k \} & \mbox{if } i \neq k
    \end{cases}
\end{align*}
where $\mathbf{a}^{(v)} = (a_i)_{i \in I}$, $\mathbf{x}^{(v)} = (x_i)_{i \in I}$, $\mathbf{a}^{(v')} = (a'_i)_{i \in I}$, $\mathbf{x}^{(v')} = (x'_i)_{i \in I}$, $B^{(v)} = (b_{ij})_{i,j \in I}$ and $v \overbar{k} v'$ in $\bExch_\bs$.
We put the linear map $p_{(v)}: \cA_{(v)}(\bR^\trop) \to \cX_{(v)}(\bR^\trop)$ as its presentation matrix is $(B^{(v)})^\tr$.
Note that we have the following compatibility:
\begin{align*}
    \mu_{x,k} \circ p_{(v)} = p_{(v')} \circ \mu_{a,k}.
\end{align*}
Thus, uniquely exists the PL map
\begin{align*}
    p_\bs: \cA_\bs(\bR^\trop) \to \cX_\bs(\bR^\trop),
\end{align*}
called the \emph{(tropicalized) ensemble map},
satisfying $\mathbf{x}^{(v)} \circ p_\bs \circ (\mathbf{a}^{(v)})^{-1} = p_{(v)}$ for all $v \in \bExch_\bs$.

\subsection{Cluster modular group}\label{subsec:cluster_mod}
Let $\mathrm{Mat}_\bs$ denote the mutation class of exchange matrices underlying the mutation class $\bs$. Then we have a map 
\begin{align*}
    B^\bullet: V(\bExch_\bs) \to \mathrm{Mat}_\bs, \quad v \mapsto B^{(v)}.
\end{align*}

\begin{dfn}
The \emph{cluster modular group} $\Gamma_\bs \subset \mathrm{Aut}(\bExch_\bs)$ consists of graph automorphism $\phi$ which preserves the fibers of the map $B^\bullet$ and the labels on the edges (in particular, the horizontal/vertical properties).
An element of the cluster modular group is called a \emph{mutation loop}. 
\end{dfn}

For a mutation loop $\phi \in \Gamma_\bs$, the edge path from a vertex $v$ to $\phi^{-1}(v)$ in $\bExch_\bs$ is called \emph{representation path} of $\phi$.

The cluster modular group $\Gamma_\bs$ acts on the cluster ensemble $p_\bs: \cA_\bs \to \cX_\bs$ so that
\begin{align*}
    \phi_a^* \mathbf{A}^{(v)} = \mathbf{A}^{(\phi^{-1}(v))}
    \quad \mbox{and} \quad
    \phi_x^* \mathbf{X}^{(v)} = \mathbf{X}^{(\phi^{-1}(v))}
\end{align*}
for $\phi \in \Gamma_\bs$.
Here, $\phi_a: \cA_\bs \to \cA_\bs$ and $\phi_x: \cX_\bs \to \cX_\bs$ are automorphisms such that $p_\bs \circ \phi_a = \phi_x \circ p_\bs$.
We can describe explicitly the action by a representation path of a mutation loop.
Let $\gamma: v \to \phi^{-1}(v)$ be a representation path of $\phi$.
We denote $\mu_{a, \gamma}$ and $\mu_{x, \gamma}$ by the composition of the tropicalized cluster transformations along $\gamma$.
Then, we have the following commutative diagrams:
\[
\begin{tikzcd}
\cA_\bs \ar[rr, "\phi_a"] && \cA_\bs\\
\cA_{(v)} \ar[u, hookrightarrow] \ar[r, "\mu_{a, \gamma}"] & \cA_{(\phi^{-1}(v))} \ar[r, "\sim"] & \cA_{(v)}\,, \ar[u, hookrightarrow]
\end{tikzcd}
\qquad
\begin{tikzcd}
\cX_\bs \ar[rr, "\phi_x"] && \cX_\bs\\
\cX_{(v)} \ar[u, hookrightarrow] \ar[r, "\mu_{x, \gamma}"] & \cX_{(\phi^{-1}(v))} \ar[r, "\sim"] & \cX_{(v)}\,. \ar[u, hookrightarrow]
\end{tikzcd}
\]
Here, $\cA_{(\phi^{-1}(v))} \xrightarrow{\sim} \cA_{(v)}$ (resp. $\cX_{(\phi^{-1}(v))} \xrightarrow{\sim} \cX_{(v)}$) is the natural identification given by $A^{(\phi^{-1}(v))}_i \mapsto A^{(v)}_i$ (resp. $X^{(\phi^{-1}(v))}_i \mapsto X^{(v)}_i$).

\end{document}